\newtheorem{theorem}{Theorem}[section]
\newtheorem{corollary}[theorem]{Corollary}
\newtheorem{lemma}[theorem]{Lemma}
\numberwithin{equation}{section}
\newtheorem{definition}[theorem]{Definition}
\theoremstyle{remark}
\newtheorem*{remarks}{Remarks}
\newtheorem*{remark}{Remark}
\newcommand{\bbC}{{\mathbb{C}}}
\newcommand{\bbD}{{\mathbb{D}}}
\newcommand{\bbR}{{\mathbb{R}}}
\newcommand{\bbZ}{{\mathbb{Z}}}
\newcommand{\fre}{{\frak{e}}}
\newcommand{\frakR}{{\mathfrak{R}}}
\newcommand{\calE}{{\mathcal{E}_R}}
\newcommand{\calJ}{{\mathcal J}}
\newcommand{\calS}{{\mathcal S}}
\newcommand{\calT}{{\mathcal T}}
\newcommand{\bdone}{{\boldsymbol{1}}}
\newcommand{\bdnot}{{\boldsymbol{0}}}
\newcommand{\til}{\tilde  }
\newcommand{\sgn}{\text{\rm{sgn}}}
\newcommand{\supp}{\text{\rm{supp}}}
\newcommand{\beq}{\begin{equation}}
\newcommand{\eeq}{\end{equation}}
\newcommand{\ba}{\begin{align*}}
\newcommand{\ea}{\end{align*}}
\newcommand{\veps}{\varepsilon}
\newcommand{\RR}{\frakR_{\eta,C,\gamma}}
\DeclareMathOperator{\real}{Re}
\DeclareMathOperator{\imag}{Im}
\DeclareMathOperator*{\esssup}{ess\,supp}
\DeclareMathOperator*{\res}{Res}
\DeclareMathOperator{\Int}{Int}
\DeclareMathOperator{\sg}{sg}
\begin{document}

\title[Spectral and resonance problem for Jacobi operators]{Spectral and Resonance Problem for Perturbations of Periodic Jacobi Operators}
\author{Rostyslav Kozhan}
\thanks{
The project was partially supported by the grant KAW 2010.0063 from the Knut and Alice Wallenberg Foundation}
\email{kozhan@kth.se}
\address{Royal Institute of Technology; Department of Mathematics; Stockholm, Sweden}


\begin{abstract}
Necessary and sufficient conditions are presented for a measure to be the spectral measure of a finite range or exponentially decaying perturbation of a periodic Jacobi operator.

As a corollary we can fully solve the inverse resonance problem: given resonances and eigenvalues we can recover the spectral measure of the Jacobi operator; we provide necessary and sufficient conditions under which such an operator exists and is unique; and we show that the inverse resonance problem is stable under small perturbations.
\end{abstract}

\maketitle






\section{Introduction}\label{sIntro}

\subsection{Preliminaries}\label{ssIntroPrelim}
By a periodic Jacobi operator/matrix we will call an operator on $\ell_2(\bbZ_+)$ of the form
\begin{equation}\label{jacobi}
\calJ=\left(
\begin{array}{cccc}
b_1&a_1&{0}&\\
a_1 &b_2&a_2&\ddots\\
{0}&a_2 &b_3&\ddots\\
&\ddots&\ddots&\ddots\end{array}\right)
\end{equation}
for which there exists some (period) $p\ge1$ such that
\begin{equation}\label{periodic}
a_{n+p}=a_n, \quad b_{n+p}=b_n \quad \mbox{for all } n.
\end{equation}

Any operator of the form \eqref{jacobi} will also be denoted by $(a_n,b_n)_{n=1}^\infty$. Sequences $\{a_n\}$, $\{b_n\}$ are called the Jacobi parameters of $\calJ$. We always assume $a_n>0$ and $b_n\in\bbR$ for all $n$.

We will be concerned here with two classes of perturbations of periodic Jacobi matrices which we will denote by~\eqref{ev} and~\eqref{exp}. The first consists of the eventually periodic matrices, that is, all matrices $\calJ$ for which there exists $s\ge1$ such that
\begin{equation}\tag{$P_{f.r.}$}\label{ev}
a_n=a_n^{\circ}, \quad b_n=b_n^{\circ} \quad \mbox{for all }n\ge s,
\end{equation}
for some periodic Jacobi matrix $(a_n^{\circ},b_n^{\circ})_{n=1}^\infty$. Another term we will use for these is finite range perturbations.

The second class consists of the exponentially decaying perturbations of periodic matrices, i.e., those matrices for which there exists $1 <  R \le\infty$ such that
\begin{equation}\tag{$P_R$}\label{exp}
\limsup_{n\to\infty} \left(|a_n-a_n^{\circ}|+|b_n-b_n^{\circ}|\right)^{1/2n}\le R^{-1},
\end{equation}
for some periodic Jacobi matrix $(a_n^{\circ},b_n^{\circ})_{n=1}^\infty$. 

Let $\mu$ be the spectral measure of $\calJ$ with respect to the vector $\delta_1=(1,0,0,\ldots)^T$ (which is cyclic since all $a_j>0$):
\begin{equation}\label{spectralDefinition}
\int_\bbR f(x)d\mu(x) = \langle \delta_1, f(\calJ) \delta_1 \rangle.
\end{equation}

It is well-known that the essential spectrum of a $p$-periodic Jacobi matrix consists of $p$ closed intervals which are allowed to touch but otherwise are disjoint. Thus
\begin{equation}\label{spectrum}
\sigma_{ess}(\calJ)\equiv \esssup\mu = \bigcup_{j=1}^{g+1} [\alpha_j,\beta_j] \equiv \fre, \quad \alpha_j<\beta_j <\alpha_{j+1}.
\end{equation}
We will refer to $[\alpha_j,\beta_j]$ as a ``band'', and to $(\beta_j,\alpha_{j+1})$ as a ``gap''. $g$ here is the number of gaps (in general $g \le p-1$). We call $\fre$ a finite gap set.

By Weyl's theorem on compact perturbations, the essential spectrum of Jacobi matrices from~\eqref{ev} or~\eqref{exp} coincides with \eqref{spectrum}.

An important special case of periodic Jacobi matrices is with $p=1$ ($g=0$). Typically one normalizes, $a_n=1$, $b_n=0$ for all $n$, in which case the spectrum is one interval $[-2,2]$ and the spectral measure is $\tfrac{\sqrt{4-x^2}}{2\pi} dx$. We will refer to this matrix as the free Jacobi matrix. 
We will use~\eqref{evFree} 
to refer to the class of finite range perturbations of the free Jacobi matrix (or ``eventually free'')
\begin{equation}\tag{$P^{p=1}_{f.r.}$}\label{evFree}
a_n=1, \quad b_n=0 \quad \mbox{for all }n\ge s,
\end{equation}
and~\eqref{expFree} 
to refer to the class of exponentially decaying perturbations of the free Jacobi matrix
\begin{equation}\tag{$P^{p=1}_R$}\label{expFree}
\limsup_{n\to\infty} \left(|a_n-1|+|b_n|\right)^{1/2n}\le R^{-1}.
\end{equation}

Volumes of literature have been devoted to investigating  various classes of perturbations of the free Jacobi operator. Of a special interest is the relation between the properties of Jacobi coefficients and of the spectral measure, especially in the cases when the relation is if-and-only-if. These cases are rare (for a great overview, see Simon's~\cite[Chapt 1]{Rice}). We would like to distinguish the Killip--Simon theorem for $L^2$ perturbations~\cite{KS}, Geronimo--Nevai's~\cite{GN} and Ryckman's~\cite{RyckBax} papers for weighted $L^1$ perturbations, and Ryckman's~\cite{RyckSz} theorem for $H^{1/2}$ perturbations.  Of these only the Killip--Simon theorem was generalized to the periodic setting by Damanik--Killip--Simon~\cite{DKS}. The current paper gives the if-and-only-if characterization for exponentially decaying and finite range perturbations of the free and periodic Jacobi matrices.

We would like to remark that the theory of orthogonal polynomials on the \textit{unit circle} (OPUC) usually goes in parallel with (and often ahead of) the spectral theory of Jacobi operators. Indeed, the above-mentioned results of Killip--Simon, Geronimo--Nevai, and Ryckman have famous predating analogues in OPUC: Szeg\H{o}'s theorem~\cite{Sze15} (see also Verblunsky~\cite{Ver36}), Baxter's theorem~\cite{Bax61}, and strong Szeg\H{o}'s theorem~\cite{Sze52} (see also Ibragimov~\cite{Ibr68}), respectively. The OPUC analogue of the current results is not yet known (though the Nevai--Totik~\cite{NT} and Peherstorfer--Steinbauer~\cite{PehSte86b} results are not unrelated), and will be addressed by the author in an upcoming paper. 


\subsection{Results overview}\label{ssResults}
The main results of this paper are Theorems~\ref{sp} and~\ref{sp_fin} which fully classify the spectral measures of~\eqref{ev} and of~\eqref{exp}. 
Some closely related results were known for $p=1$, and we postpone their review until the end of the section.

For the super-exponential perturbations (\eqref{exp} with $R=\infty$), we establish a sharpening of the spectral characterization as follows. We show that
\begin{equation}\label{GeronimoBaxter}
\limsup_{n\to\infty} \frac{n\log n}{\log (|a_n-a_n^{\circ}| + |{b}_n-b_n^{\circ}|)^{-1}} = \frac{1}{2(g+1)} \limsup_{n\to\infty} \frac{n\log n}{\log |c_n|^{-1}},
\end{equation}
where $c_n$ are the Fourier coefficients of the (suitably modified) a.c. density of the spectral measure of $(a_n,b_n)_{n=1}^\infty$. 
For the free case~\eqref{expFree}, this curious prefactor $\frac{1}{2(g+1)}$ (with $g=0$) on the right-hand side of~\eqref{GeronimoBaxter} has already appeared in Geronimo's result~\cite[Thm 13]{Geronimo}. This equality has a connection to the Baxter theorem~\cite{Bax61} in the theory of orthogonal polynomials on the unit circle, which also relates the asymptotic behaviors of Fourier and recurrence coefficients. We will therefore refer to an equivalence of the type~\eqref{GeronimoBaxter} as a Geronimo--Baxter theorem. Apart from an interest in its own right, we need this result in our investigation of resonances.

Our spectral measures characterization allows us to tackle the associated inverse resonance problem of recovering the operator from its resonances (to be defined in a moment) and eigenvalues. Such a problem appears naturally from the point of view of physics (where resonances of a physical system appear as semi-stable decaying states that are measurable in a laboratory) and has been a topic of active investigation for various classes of operators (including continuous Schr\"{o}dinger operators on the half and full line, one- and two-sided Jacobi operators, CMV operators, etc). 

Mathematically, resonances of an operator can be defined as the poles of the operator's resolvent on the second sheet of a Riemann surface associated with the resolvent set of the operator. As we will see later, this notion makes sense precisely when the perturbation is exponentially decaying~\eqref{exp} or finite range~\eqref{ev}. See Definitions~\ref{s} and~\ref{singularity}, where we make all this rigorous and specific to our setting.

We solve the inverse resonance problem, which consists of reconstruction of the operator (or equivalently, of its spectral measure) out of the knowledge of its resonances and eigenvalues. Moreover,  we find the necessary and sufficient conditions on the configurations of resonances and eigenvalues for such an operator to exist. We will refer to this result as ``existence for the inverse resonance problem''. We also show that such an operator is unique if we restrict ourselves to only finite range perturbations, or, more generally, to the subclass of super-exponential perturbations $(a_n,b_n)_{n=1}^\infty$ that satisfy
\begin{equation}\label{recoverable}
|a_n-a_n^{\circ}| + |{b}_n-b_n^{\circ}| \le C e^{-\eta n \log n}
\end{equation}
for some periodic Jacobi operator $(a_n^{\circ},b_n^{\circ})_{n=1}^\infty$ and fixed constants $\eta>2 (g+1)$, $C>0$. This is essentially sharp: we show that extending the class to operators satisfying~\eqref{recoverable} with $\eta\ge 2(g+1)$ already allows one to find two Jacobi operators with identical eigenvalues and resonances.

After establishing existence and uniqueness, it is natural to raise the question of stability of the inverse resonance problem, especially because of its significance from the point of view of physics.
Indeed, measuring infinitely many states of a system is not feasible, and those finitely many states that get measured will pick up measurement errors. Mathematically, we can think of two Jacobi operators $\calJ=(a_n,b_n)_{n=1}^\infty$ and $\til{\calJ}=(\til{a}_n,\til{b}_n)_{n=1}^\infty$ whose eigenvalues and resonances  in a disk of a large radius $R$ are pairwise $\veps$-close to each other. The resonances of $\calJ$ and $\til{\calJ}$ outside of this disk can be arbitrary. We show that the Jacobi coefficients are then close to each other
$$
|a_n - \til{a}_n | + |b_n - \til{b}_n| \le Q^{n^2} \left(\frac{1}{R^\tau} + \sqrt{\veps}\right) \quad \mbox{ for all } n
$$
with some constants $Q>1$, $0<\tau<1$. We stress that constants $Q$ and $\tau$ do not depend on $\veps$, $R$, or the choice of $\calJ$ and $\tilde\calJ$.
This means that by measuring sufficiently many states ($R\gg 0$) with precise enough measurements ($0<\veps\ll 1$)
we can recover any (finite) number of the Jacobi coefficients to any precision we want.

The organization of the paper is as follows. In Section~\ref{sPrelim} we present the relevant definitions and preliminaries. In Section~\ref{sSpectral} we prove the spectral characterization. In Section~\ref{sMatrix} we obtain a matrix-valued spectral theorem and a Geronimo--Baxter theorem. In Section~\ref{sGB} we establish a Geronimo--Baxter theorem for (scalar) periodic Jacobi matrices.  We solve the inverse resonance problem in Sections~\ref{sResonance} (existence and uniqueness) and~\ref{sResonanceStability} (stability). Finally, in Section~\ref{sApplications} we discuss two corollaries. First is an improvement of the Damanik--Simon~\cite{DS2} theorem which classified Jost functions for~\eqref{evFree} and~\eqref{expFree}. Second is an investigation of the effect that a point mass perturbation of the spectral measure has on the Jacobi coefficients. This was motivated by questions raised by Geronimo in~\cite{Geronimo} for~\eqref{expFree}.




\subsection{Historical discussion}\label{ssHistory}

Theorems~\ref{sp}/\ref{sp_fin} have two directions. For the duration of the paper, $(P_R)\Rightarrow (S_R)$ will be called the ``direct spectral problem'', and $(S_R) \Rightarrow (P_R)$ will be called the ``inverse spectral problem''. Similarly for the direct/inverse resonance problem.

Typically direct problems are much easier to deal with, and our setting is no exception. It was certainly well-known that the spectral measure of a matrix from~\eqref{evFree} or~\eqref{expFree} has a meromorphic a.c. density, no singular-continuous part, and at most finitely many point masses. This was shown by Geronimo~\cite{Geronimo} and Geronimo--Case~\cite{GC}. Similar result for~\eqref{ev} was shown by Geronimo--Van Assche~\cite{GvA86}, and a more complete description can be extracted from the recent Iantchenko--Korotyaev results~\cite{IK3}. The author is not aware of any prior results concerning the direct spectral problem for~\eqref{exp}.

The inverse spectral problem was less understood. For~\eqref{expFree} it was investigated by Geronimo in~\cite{Geronimo}. His two results~\cite[Theorems 13 and 14]{Geronimo} work only under the restriction that the a.c. density of the spectral measure has no poles and finitely many poles, respectively (this restriction was removed by Damanik--Simon in~\cite{DS2}). It also had an implicit condition (positivity of the so-called canonical weights) that we are able to make explicit below (see remarks 2 and 3 after Theorem~\ref{sp} below).

The other two results that should definitely be mentioned are~\cite{DS2} and~\cite{IK3}. Damanik--Simon~\cite{DS2}  fully characterize Jost functions for~\eqref{evFree} and~\eqref{expFree}, and  Iantchenko--Korotyaev~\cite{IK3} do this for~\eqref{ev}. Jost functions and spectral measures are closely connected, so these inverse problems are certainly related to our inverse spectral problem. In fact, the Damanik--Simon ideas (together with the ``Magic'' of Damanik--Killip--Simon~\cite{DKS}) served as the original inspiration and an important stepping stone for our current results. We remark that in the process we are able to establish an improved version of Damanik--Simon theorem, see Theorem~\ref{DamSim} below. The Iantchenko--Korotyaev methods for dealing with eventually periodic case~\eqref{ev} are entirely different, and their results do not seem to imply our 
Theorem~\ref{sp_fin}, nor vice versa. One of the reasons is that the models are not the same: they fix a periodic Jacobi matrix, which is assumed to be known, and
then consider finite range perturbations of it. In our approach for~\eqref{ev}, we fix the support of the spectrum and consider finite range perturbations of any Jacobi matrix from the isospectral
torus, without any other knowledge about it.

The Geronimo--Baxter type Theorem~\ref{baxterSuper} was inspired by Geronimo's result~\cite[Thm 14]{Geronimo} for~\eqref{expFree}, whose precursors were Baxter's~\cite{Bax61}, Geronimo's~\cite{Ger80}, and Geronimo--Nevai's~\cite{GN}.

Questions related to the spectral measures for \textit{block} (matrix-valued) Jacobi operators with exponentially decaying coefficients were studied in another Geronimo's paper~\cite{Geronimo_matrix}, as well as in the author's~\cite{K_jost}.

The direct resonance problem for~\eqref{ev} was completely solved in Iantchenko--Korotyaev~\cite[Thm 1.2]{IK3}\footnote{\cite[Thm 1.2]{IK3} has a mistake: part (2) should not be there}. Their inverse resonance problem assumed additional information. Uniqueness of the inverse resonance problem for super-exponential perturbations of the free case~\eqref{expFree} was solved by Brown--Naboko--Weikard~\cite{BNW_Jac}. Existence for the inverse resonance problem was not solved even for the free case. 

Stability for finite range perturbations of one-sided Jacobi operators was studied by Marletta--Naboko--Shterenberg--Weikard~\cite{MNSW}. The overlap with the stability problem in the current paper is only in the $p=1$ case, where our result is stronger, since we do not restrict ourselves to only finite-range perturbations and our estimate is explicit in $\veps$ and $R$ (in fact with the optimal exponents, at least for $\veps$). We stress however that their method works in impressive generality that in particular allows them to tackle finite range perturbations of \textit{unbounded} Jacobi operators.






We would also like to  mention the Marletta--Weikard~\cite{MW} 
paper that investigates uniqueness and stability of the inverse resonance problem for the discrete Schr\"{o}dinger operators with complex potentials.


Numerous authors also study the direct/inverse resonance problem for other classes of operators. As a non-exhaustive list of papers on this topic, we can suggest \cite{Zw_res} (for the general introduction), \cite{Ble_Jac,IK1,IK2,Kor_Jac} (for perturbations of two-sided Jacobi matrices), \cite{BKW,BW04,Kor_Shr_half,Kor_Stab,MSW10,S_resonances,Zwo01} (for half line Schr\"{o}dinger operators), \cite{Ble_Sch,Chr06,Fir84,Fro97,Hit99,Kor_Shr_real,Kor_Shr_periodic,S_resonances,zworski} (for full line Schr\"{o}dinger operators), 
\cite{SWZ,WZ} (for CMV operators), \cite{BNW_Her} (for perturbations of the Hermite operator).

Though not directly related to our topic of discussion, we would like to mention the results of Volberg--Yuditskii~\cite{VolYud02}, Egorova--Michor--Teschl~\cite{EMT} and Khanmamedov~\cite{Kha06,Kha09}, who study the inverse \textit{scattering} problem for perturbations of periodic (and more general) Jacobi matrices. See also Egorova--Michor--Teschl~\cite{EMT13} and the references therein for more information in this direction.



Finally, the effects of adding and removing a point mass to/from the spectral measures (in the context of Jacobi operators) were initially investigated by Uvarov~\cite{Uva69} and Nevai~\cite{NevBook} (see also Geronimo--Nevai~\cite{GN} and Geronimo~\cite{Geronimo}). Another well-known approach is the double commutation method of Gesztesy--Teschl~\cite{double_comm}.  These results work under great generality, of course, but are not as explicit as we would need for our purposes here. 

\smallskip

A major chunk of technical details has been delegated to the author's previous two works~\cite{K_jost} and~\cite{K_merom}. 
Notation-wise we kept the current paper self-contained. However in the body of several proofs we will have to quote some of the technical results from these two papers.

We should stress that throughout the paper we restrict ourselves to finite gap sets $\fre$ that have ``all gaps open'' (see the discussion in Subsection~\ref{ssPeriodic}). In a sense this is a generic situation for periodic Jacobi matrices. There is little doubt that all the results in the paper should be generalizable not only to the ``closed gaps'' periodic Jacobi matrices, but in fact to all the finite gap Jacobi matrices as well. This is left as an open problem. 


\subsection{Acknowledgements}
It is a pleasure to thank Rowan Killip and Barry Simon for useful discussions. 

The results were completed and presented on conferences in the early 2013 during the author's stay at the Department of Mathematics at UCLA. The write-up was completed during the author's stay at the Department of Mathematics at the Royal Institute of Technology. The author is grateful to both departments for the hospitality.

\section{Preliminaries}\label{sPrelim}

For the textbook presentation of the theory of orthogonal polynomials on the real line (including the spectral theory of periodic Jacobi operators), we refer the reader to the recent Simon's monograph~\cite{Rice}. We follow closely the notation there.

\subsection{Riemann surface $\calS_\fre$}\label{ssRiem}

To a Jacobi operator $\calJ$ and its spectral measure $\mu$~\eqref{spectralDefinition}, we can associate
\begin{equation}\label{m}
m(z)=\int \frac{d\mu(x)}{x-z}, \quad z\notin\esssup\mu,
\end{equation}
the Borel/Stieltjes/Cauchy transform of $\mu$. From~\eqref{spectralDefinition}, $m$ is also the $(1,1)$-entry of the resolvent of $\calJ$. We will refer to this function as the $m$-function of $\calJ$.

For a measure $\mu$ with \eqref{spectrum}, the $m$-function \eqref{m}  is a meromorphic on $\bbC\setminus\fre$ function. It is Herglotz, in the meaning that $\imag m(z)>0$ whenever $\imag z>0$, and $\imag m(z)<0$ whenever $\imag z<0$. As we showed in~\cite{K_merom} (see Lemmas~\ref{merom} and~\ref{merom_fin} below), under \eqref{ev} or \eqref{exp}, the $m$-function has a meromorphic continuation through the bands of $\fre$ to some domain of the second sheet of a certain Riemann surface $\calS_\fre$.

\begin{definition}\label{s}
Assume $\fre$ is a finite gap set \eqref{spectrum}. Define $\calS_\fre$ to be the be the hyperelliptic Riemann surface corresponding to the polynomial $\prod_{j=1}^{g+1}(z-\alpha_j)(z-\beta_j)$.
\end{definition}

We will not give the formal definition, which can be found in many textbooks (see, e.g., \cite[Sect 5.12]{Rice}). Informally $\calS_\fre$ can be described as follows.

Let $\bbC_+ = \{z: \imag z>0\}$, $\bbC_- = \{z:\imag z<0\}$. Denote $\calS_+$ and $\calS_-$ to be two copies of $\bbC\cup\{\infty\}$ with a slit along $\fre$ (include $\fre$ as a top edge and exclude it from the lower), and let $\calS_\fre$ be $\calS_+$ and $\calS_-$ glued together along $\fre$ in the following way: passing from $ \calS_+\cap\bbC_+ $ through $\fre$ takes us to
$ \calS_- \cap\bbC_- $, and from $ \calS_+\cap\bbC_-$ to $ \calS_-\cap\bbC_+$. It is clear that topologically $\calS_\fre$ is  an orientable manifold of genus $g$.

Let $\pi:\calS_\fre\to \bbC\cup\{\infty\}$ be the ``projection map'' which extends the natural inclusions $\calS_+\hookrightarrow \bbC\cup\{\infty\}$, $\calS_-\hookrightarrow \bbC\cup\{\infty\}$.

The following notation will be used frequently throughout the paper.
\begin{definition}\label{sharp}
\hspace*{\fill}
\begin{list}{}
{
\setlength\labelwidth{2em}%
\setlength\labelsep{0.5em}%
\setlength\leftmargin{2.5em}%
}
\item[$\bullet$] For $z\in\bbC\cup\{\infty\}$, denote by $z_+$ and $z_-$ the two preimages $\pi^{-1}(z)$ in $\calS_+$ and $\calS_-$ respectively $($for $z\in\cup_{j=1}^{g+1}\{\alpha_j,\beta_j\}$, $z_+$ and $z_-$ coincide$)$.
\item[$\bullet$] Let $z^\sharp$ be $\Big(\overline{\pi(z)}\Big)_-$ if  $z\in \calS_+\setminus\pi^{-1}(\fre)$, and 
    $\left(\overline{\pi(z)}\right)_+$ if $z\in \calS_-\setminus\pi^{-1}(\fre)$. In order to make this continuous, we make the convention  $z^\sharp=z$ for $z\in \pi^{-1}(\fre)$.
\item[$\bullet$] For a function $m$ on $\calS_\fre$, let  $m^\sharp(z)=\overline{m(z^\sharp)}$.
\end{list}
\end{definition}
\begin{remarks}
1. Note that $\bar{z}^\sharp$ is the point on another sheet (i.e., if $z\in\calS_+$ then $\bar{z}^\sharp\in\calS_-$ and vice versa) satisfying $\pi(z)=\pi(\bar{z}^\sharp)$.

2. Since $m$ in~\eqref{m} is Herglotz, $m(\bar{z}) = \overline{m(z)}$ for $z\in\calS_+$. In \eqref{symmetry} we will see that under~\eqref{ev} or~\eqref{exp}, $m(\bar{z}) = \overline{m(z)}$ for $z\in\calS_-$ as well, so one can think of $m^\sharp(z)$ as $m(\bar{z}^\sharp)$.
\end{remarks}

To simplify the notation, we will commonly write $\calS_+\cap\bbC_+$ instead of $\calS_+\cap\pi^{-1}(\bbC_+)$, etc. 

\subsection{Periodic Jacobi operators and their perturbations}\label{ssPeriodic}

As was mentioned in Section~\ref{sIntro}, the essential spectrum of any $p$-periodic Jacobi matrix is a finite gap set \eqref{spectrum}, where $g\le p-1$. In fact, there exists a polynomial $\Delta$ (called the discriminant) of degree $p$ such that
\begin{equation}\label{discriminant}
\fre = \bigcup_{j=1}^{g+1} [\alpha_j,\beta_j] = \Delta^{-1}([-2,2]).
\end{equation}
In particular $\Delta(z)=z$ when $\fre=[-2,2]$.

If $g = p-1$, then we say that $\calJ$ has all gaps open. It is known that a finite gap set $\fre$ is an essential spectrum of some periodic Jacobi matrix if and only if the harmonic measure of each band is rational. It is an essential spectrum of some periodic Jacobi matrix with all gaps open if and only if the harmonic measure of each band is equal.

It turns out that if there exists at least one periodic Jacobi matrix $\calJ$ with $\sigma_{ess}(\calJ)=\fre$, then there exists a whole set of periodic Jacobi matrices satisfying the same property. In fact, this set is homeomorphic to $(S^1)^g$, a $g$-dimensional torus. See Remark~2 after Thm~\ref{resonanceEv} below and~\cite[Chapt~5]{Rice} for more details. This motivates the following definition.

\begin{definition}\label{torus}
The {isospectral torus} $\calT_\fre$  of  $\fre$ is the set of periodic Jacobi matrices $\calJ$ with $\sigma_{ess}(\calJ)=\fre$.
\end{definition}

From now on assume that all gaps of $\fre$ are open, i.e.,
$$
p=g+1
$$
(this is a generic situation, see \cite[Chapt 5]{Rice}).

Let us define some special subsets $\calS_R$ of $\calS_\fre$ that enter naturally.

\begin{definition}\label{domainE}
Let $x(z)=z+z^{-1}$. For each $R>1$, let
\begin{equation*}\label{calsr}
\calS_R=\calS_+\cup \pi^{-1}(\calE),
\end{equation*}
where $\calE$ is defined to be the union of the interiors of the bounded components of the set $\Delta^{-1}(x(R\,\partial\bbD))$.
\end{definition}

Note that $x(R\,\partial\bbD)$ for various values of $R>1$ are concentric ellipses. Below is an example ($p=3, g=2$) how $\calE$ evolves as $R$ grows. We stress that the set $\calE$ is fully determined by $\fre$ and $R>1$, and in fact, using the results of \cite[Chapt 5]{Rice}, it is easy
to see that $\partial\calE$ are precisely the level sets of the logarithmic potential of the equilibrium measure for $\fre$.


{
\begin{center}
\resizebox{0.4\textwidth}{!}{%
  \includegraphics{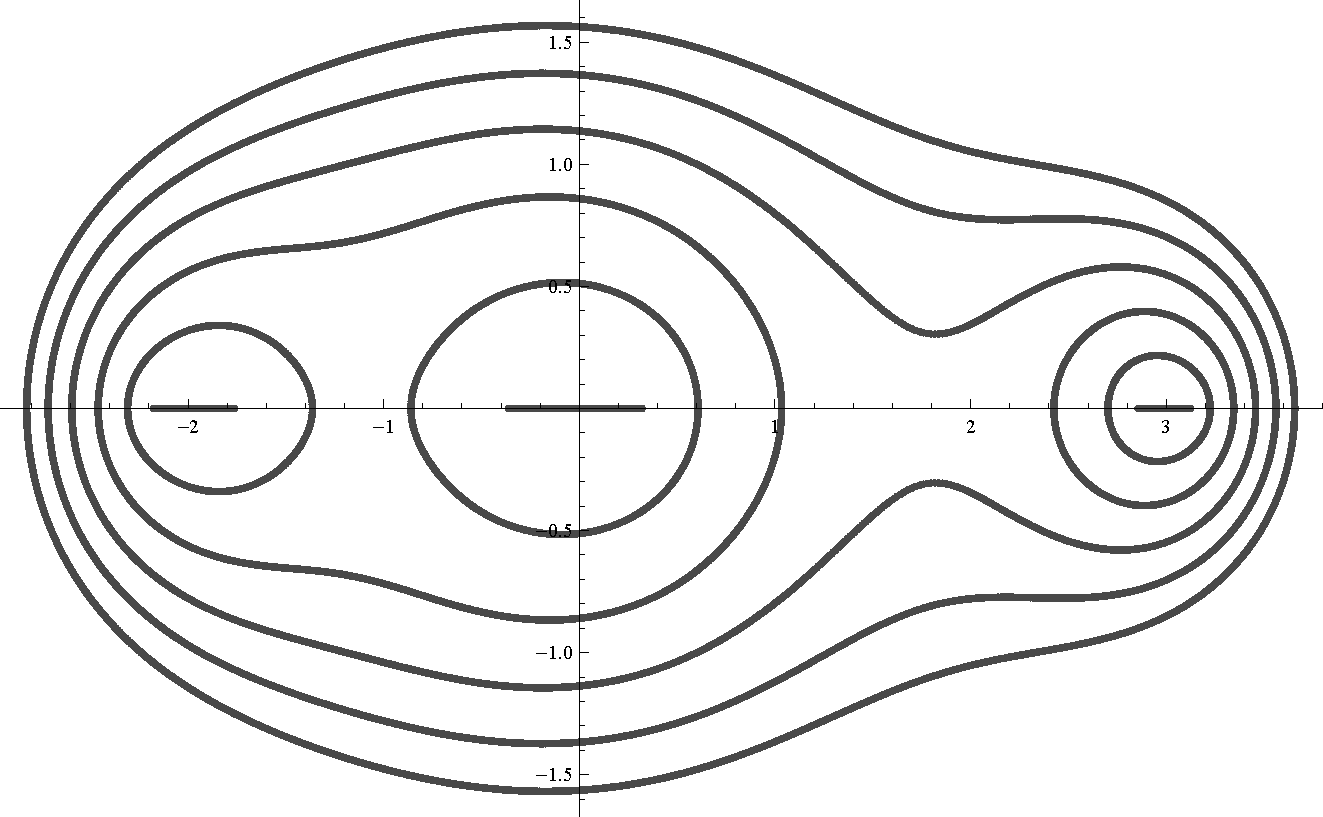}
}
\end{center}
}

\bigskip

The next two results are taken from the author's \cite{K_merom}. We will slightly abuse the notation and use the same symbol $m$ for the $m$-function~\eqref{m} and its meromorphic continuation which is a function on (a subdomain of) $\calS_\fre$.

\begin{lemma}[\cite{K_merom}]\label{merom}
Assume $\esssup\mu=\mathfrak{e}$, and let $m(z)=\int \frac{d\mu(x)}{x-z}$.

The following are equivalent:
\begin{itemize}
\item[($P_R$)] The Jacobi matrix $(a_n,b_n)_{n=1}^\infty$ associated  with $\mu$  satisfies
  \begin{equation*}
  \limsup_{n\to\infty} \left(|a_n-a_n^{\circ}| + |b_n-b_n^{\circ}|\right)^{1/2n}  \le R^{-1},
  \end{equation*}
  where $(a_n^{\circ},b_n^{\circ})_{n=1}^\infty$ is a periodic Jacobi matrix from $\calT_\fre$.
\item[($M_R$)] $m$ satisfies
  \begin{itemize}
  \item[($M_a$)] $m$ has a meromorphic continuation to $\calS_R$; 
  \item[($M_b$)] $m$ has no poles on $\pi^{-1}(\fre)$, except at $\pi^{-1}(\cup_{j=1}^{p}\{\alpha_j,\beta_j\})$, where they are at most simple;
  \item[($M_c$)] $m(z)-m^\sharp(z)$ has no zeros in $\pi^{-1}(\calE)$, except at $\pi^{-1}(\cup_{j=1}^{p}\{\alpha_j,\beta_j\})$, where they are at most simple; 
  \item[($M_d$)] If $m$ has a pole at $z$ for $z\in \pi^{-1}(\calE\setminus \fre)$ then $z^\sharp$ is not a pole of $m$.
  \end{itemize}
\end{itemize}
\end{lemma}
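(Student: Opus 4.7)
The plan is to base the equivalence on the Jost function machinery adapted to the periodic background. Under $(P_R)$, I would construct the Jost solution $u_n(z)$ of $a_n u_{n+1} + (b_n-z)u_n + a_{n-1}u_{n-1}=0$ whose asymptotics match the decaying Floquet solution of the periodic recursion $(a_n^\circ, b_n^\circ)$. Solving the associated discrete Volterra equation, the exponential decay of $a_n-a_n^\circ$ and $b_n-b_n^\circ$ at rate $R^{-2n}$ guarantees convergence on the set where the ratio of the two Floquet solutions is bounded by $R^2$, which is precisely $\calS_R$. The identification $m(z) = -u_1(z)/(a_0 u_0(z))$ (up to the appropriate normalization of the Jost solution at $\infty_+$) then yields the meromorphic continuation of $m$ to $\calS_R$, giving $(M_a)$.

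The pole-zero structure $(M_b)$--$(M_d)$ would follow by a Wronskian analysis. Poles of $m$ on $\calS_+ \setminus \pi^{-1}(\fre)$ must project to real simple eigenvalues in the gaps, while poles on $\pi^{-1}(\calE) \cap \calS_-$ are resonances, both arising from simple zeros of $u_0$; their simplicity comes from the Herglotz property transported to the second sheet via the symmetry $m(\bar z)=\overline{m(z)}$, extended by analytic continuation to give $m^\sharp(z)=\overline{m(z^\sharp)}$ (this justifies the remark~\eqref{symmetry} in the paper). A constant-Wronskian identity of the form $u_n(z)\, u_n^\sharp(z) - \tfrac{a_{n-1}}{a_n} u_{n-1}(z)\, u_{n+1}^\sharp(z) = W(z)$ together with $m(z)-m^\sharp(z) = \text{const}/(a_0 u_0(z) u_0^\sharp(z))$ shows that zeros of $m-m^\sharp$ in $\pi^{-1}(\calE)$ correspond to shared poles of $u_0$ and $u_0^\sharp$, which forces both $(M_c)$ and $(M_d)$, with the degeneracy only at the ramification points $\{\alpha_j,\beta_j\}$.

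For the converse $(M_R) \Rightarrow (P_R)$, I would reconstruct a candidate Jost function $u_0$ from $m$ by solving a multiplicative Riemann--Hilbert problem on $\calS_\fre$: prescribe the jump across $\pi^{-1}(\fre)$ via the Herglotz boundary values of $m$, place zeros at the poles of $m$ in $\calS_R$, poles at the edges where allowed by $(M_b)$, and normalize at $\infty_+$. Conditions $(M_c)$ and $(M_d)$ are precisely the data needed to prevent spurious factors on the second sheet. Recover the Jacobi parameters via the asymptotic expansion of $m$ (equivalently of the ratios $u_{n+1}/u_n$) at $\infty_+$; the exponential decay of $a_n-a_n^\circ$, $b_n-b_n^\circ$ with rate $R^{-2n}$ is inherited from the analyticity of $u_0$ on $\calS_R$ combined with the Combes--Thomas type decay of the Floquet solutions.

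The main obstacle is the inverse direction, and within it, the careful bookkeeping of pole/zero cancellations across the two sheets inside $\pi^{-1}(\calE\setminus \fre)$. The content of $(M_c)$--$(M_d)$ is a compatibility condition that is non-obvious to verify is sharp: $(M_d)$ prevents the construction from producing a non-trivial kernel in the Riemann--Hilbert problem, while $(M_c)$ ensures the would-be Jost function does not vanish anywhere outside its prescribed zero set. Handling the ramification points $\pi^{-1}(\cup_j\{\alpha_j,\beta_j\})$ with the ``at most simple'' convention requires a local model in a Puiseux coordinate, and is the place where the genericity assumption $p=g+1$ (all gaps open) enters most visibly.
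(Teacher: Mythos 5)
You should first note that the paper offers no proof of this lemma at all: it is imported verbatim from the author's earlier work \cite{K_merom}, so there is no in-text argument to compare against. Judging from how the machinery is invoked elsewhere in the paper (the proofs of Theorems~4.1 and~5.2, and the identity $\det\mathfrak{a}(\lambda)=c\prod_j a(f_j(\lambda))$), the route taken in \cite{K_merom} is not the one you propose: it passes through the Damanik--Killip--Simon ``Magic Formula'', viewing $\Delta(\calJ)$ as a block Jacobi matrix that is an exponentially decaying perturbation of the \emph{free} block Jacobi matrix, applies the matrix-valued Jost-function theory of \cite{K_jost} (Geronimo--Case recursions, coefficient stripping, the $|||\cdot|||_R$ estimates), and then translates back to $\calS_\fre$ via the $p$ inverse branches of $\Delta$. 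Your direct approach --- Floquet-normalized Jost solutions on the periodic background plus a Riemann--Hilbert reconstruction --- is a genuinely different and in principle viable strategy, and your Wronskian identity $m-m^\sharp=\mathrm{const}/(u_0 u_0^\sharp)$ is indeed the right mechanism behind $(M_c)$ and $(M_d)$ in the direct implication.

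However, as a proof the inverse direction has two concrete gaps. First, a multiplicative Riemann--Hilbert problem on the genus-$g$ surface $\calS_\fre$ with a prescribed zero/pole divisor is obstructed by the Abel map: the divisor must be principal (or one must carry $g$ extra free zeros, i.e.\ the Dirichlet data parametrizing $\calT_\fre$), and nothing in $(M_a)$--$(M_d)$ hands you this for free; this is exactly why the periodic case cannot be treated by naively copying the $p=1$ Damanik--Simon construction, and your sketch does not address it. Second, ``Combes--Thomas type decay'' is not the mechanism that converts analyticity of the reconstructed Jost function on $\calS_R$ into the rate $\limsup(|a_n-a_n^\circ|+|b_n-b_n^\circ|)^{1/2n}\le R^{-1}$; Combes--Thomas controls Green's functions off the spectrum, not differences of recursion coefficients. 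The quantitative step requires an iterative coefficient-stripping estimate of the type $|||u^{(n+1)}|||_{R_1}\lesssim R_1^{-2n}|||u|||_{R_1}$ used in the proof of Theorem~4.1 of this paper, and omitting it leaves the inverse implication unproved even granting the Riemann--Hilbert construction.
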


\begin{lemma}[\cite{K_merom}]\label{merom_fin}
Assume $\esssup\mu=\mathfrak{e}$, and let $m(z)=\int \frac{d\mu(x)}{x-z}$.

The following are equivalent:
\begin{itemize}
\item[($P_{f.r.}$)] The Jacobi matrix $(a_n,b_n)_{n=1}^\infty$ associated  with $\mu$  is eventually periodic, i.e., satisfies
  \begin{equation*}
(a_n,b_n)_{n=s}^\infty\in\calT_\fre \quad \mbox{ for  some }s.
  \end{equation*}
\item[($M_{f.r.}$)] $m$ satisfies
  \begin{itemize}
  \item[($M_a'$)] $m$ has a meromorphic continuation to $\calS_\fre$;
  \item[($M_b'$)] $m$ has no poles on $\pi^{-1}(\fre)$, except at $\pi^{-1}(\cup_{j=1}^{p}\{\alpha_j,\beta_j\})$, where they are at most simple;
  \item[($M_c'$)] $m(z)-m^\sharp(z)$ has no zeros in $\calS_\fre\setminus\{\pm\infty\}$, except at $\pi^{-1}(\cup_{j=1}^{p}\{\alpha_j,\beta_j\})$, where they are at most simple;
  \item[($M_d'$)] If $m$ has a pole at $z$ for $z\in \pi^{-1}(\bbC\setminus \fre)$ then $z^\sharp$ is not a pole of $m$.
  \end{itemize}
\end{itemize}
\end{lemma}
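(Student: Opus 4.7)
The plan is to reduce Lemma \ref{merom_fin} to Lemma \ref{merom} combined with the rigidity coming from $\calS_\fre$ being compact. Recall that for any measure on the real line with Jacobi parameters $(a_n,b_n)_{n=1}^\infty$, the coefficient-stripping recursion
$$
m_{n-1}(z) \;=\; \frac{1}{b_n - z - a_n^2\, m_n(z)}
$$
relates successive $m$-functions. So my strategy is: use Lemma \ref{merom} to harvest the analytic continuation to $\calS_R$ for every $R$ in the hypothesis $(M_{f.r.})$, and then exploit finiteness of poles on the compact surface $\calS_\fre$ to argue that after finitely many strips the stripped $m$-function must coincide with that of an element of $\calT_\fre$.

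For the direct direction $(P_{f.r.})\Rightarrow(M_{f.r.})$, I would start at the periodic tail: $(a_n^\circ,b_n^\circ)_{n=s}^\infty\in\calT_\fre$ has a well-known algebraic $m$-function $m_s^\circ$ which lives as a meromorphic function on all of $\calS_\fre$ and satisfies the four conditions $(M_a')$--$(M_d')$ (this follows from the classical structure of periodic $m$-functions, see \cite[Chap.~5]{Rice}). Then $m=m_0$ is obtained by applying the Möbius-type recursion above $s$ times to $m_s=m_s^\circ$. Each step preserves meromorphicity on $\calS_\fre$, and a bookkeeping of poles and zeros shows that conditions $(M_b')$--$(M_d')$ are preserved: a zero of $m_n-m_n^\sharp$ in the wrong place would be created only by a matching pole configuration, which can be ruled out inductively. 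The delicate point is to verify that the $(M_c')$ condition is maintained at the band edges and at $\pm\infty$, which reduces to a local expansion of $m_n$ in the uniformizing variable of $\calS_\fre$ near these branch points.

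For the inverse direction $(M_{f.r.})\Rightarrow(P_{f.r.})$, the crucial observation is that $\calS_\fre$ is compact, so a meromorphic function there has only finitely many poles. Hypothesis $(M_{f.r.})$ therefore furnishes hypothesis $(M_R)$ for every $R>1$, and Lemma \ref{merom} yields super-exponential decay $|a_n-a_n^\circ|+|b_n-b_n^\circ|\le C_R R^{-2n}$ for every $R$. To upgrade this to eventual periodicity I would introduce a \emph{defect count}: the total number of poles of $m_n$ on $\calS_\fre\setminus\pi^{-1}(\fre)$ beyond those forced by an $m$-function from $\calT_\fre$. Conditions $(M_c')$ and $(M_d')$ prevent cancellations and force this count to be a nonnegative integer that strictly decreases under stripping as long as $m_n$ is not itself a periodic $m$-function. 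Since it is finite to begin with, after finitely many steps $m_s$ must coincide with the $m$-function of some element of $\calT_\fre$, and then the parameters $(a_n,b_n)_{n\ge s}$ agree with that element. The main obstacle, and the place where the book-keeping is most subtle, is controlling how the zeros of $m_n - m_n^\sharp$ are exchanged with the poles of $m_{n+1}$ under one stripping step — in particular, ensuring that the defect count is genuinely monotone and does not increase because of interactions at the branch points $\pi^{-1}(\{\alpha_j,\beta_j\})$. This is where conditions $(M_b')$--$(M_d')$ are used in essential, and not merely auxiliary, fashion.
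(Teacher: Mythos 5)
You should first note that the paper does not actually prove Lemma~\ref{merom_fin}: it is quoted verbatim from \cite{K_merom} (``The next two results are taken from the author's \cite{K_merom}''), so there is no in-text argument to compare against. Judging from the machinery the paper does build (Section~\ref{sMatrix}, the discriminant $\Delta$, and the repeated appeals to the matrix-valued $m$-function criterion of \cite{K_jost}), the cited proof almost certainly runs through the Damanik--Killip--Simon ``Magic Formula'': one passes to the block operator $\Delta(\calJ)$, whose matrix-valued $m$-function lives on the genus-zero surface of $\sqrt{\lambda^2-4}$, and reduces everything to the $p=1$ block statement. Your route --- coefficient stripping performed directly on $\calS_\fre$ --- is a genuinely different architecture, and it is not unreasonable: the paper's own Theorem~\ref{degree} carries out exactly this kind of stripping-and-degree bookkeeping on $\calS_\fre$. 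But note that Theorem~\ref{degree} does so only \emph{after} eventual periodicity is already known, which is precisely what you are trying to prove.

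The inverse direction of your sketch has a concrete gap. Writing the stripping step as $a_{n+1}^2 m_{n+1}(z)=b_{n+1}-z-1/m_n(z)$ and counting poles of the map $m_n:\calS_\fre\to\bbC\cup\{\infty\}$, one finds that $\deg m_{n+1}=\deg m_n$ whenever $m_n(\infty_-)\neq 0$ (the finite poles of $m_{n+1}$ are the finite zeros of $m_n$, of which there are $\deg m_n-1$, and the term $-z$ always contributes a simple pole at $\infty_-$), while $\deg m_{n+1}\in\{\deg m_n-1,\deg m_n-2\}$ only when $m_n$ vanishes at $\infty_-$. So your ``defect count'' is merely non-increasing, and the assertion that it \emph{strictly} decreases whenever $m_n$ is not periodic is exactly the statement that needs proof, not a consequence of $(M_c')$ and $(M_d')$ alone. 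To close the argument you must show (a) that once the degree stabilizes, the conditions $(M_a')$--$(M_d')$ together with the Herglotz property force $m_n$ to be the $m$-function of a point of $\calT_\fre$ within at most two further strips --- this is the Flaschka--McLaughlin/Krichever--van Moerbeke characterization of the isospectral torus as the degree-$p$ meromorphic Herglotz functions with one pole per gap and a pole at $\infty_-$, which you never invoke; and (b) that $(M_a')$--$(M_d')$ actually persist under stripping, so that the induction is well-posed --- this is the same zero/pole bookkeeping (via $m_n-m_n^\sharp=a_{n+1}^2\,m_n m_n^\sharp\,(m_{n+1}-m_{n+1}^\sharp)$ and local expansions at the branch points) that you also leave unchecked in the direct direction. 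Without (a) and (b) the proof does not close.
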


Now that we know that $m$ has a meromorphic continuation to the second sheet of $\calS_\fre$, we can define the notion of resonances.

\begin{definition}\label{singularity}
Suppose $m$ satisfies Lemma~\ref{merom} or \ref{merom_fin}, and let $z\in\calS_\fre\setminus\{\infty_+,\infty_-\}$ be a pole of $m$. Then
\begin{itemize}
\item We call $\pi(z)$ a singularity of $\calJ$;
\item If $z\in\calS_+\setminus \pi^{-1}(\fre)$,  
   then we say that $\pi(z)$ is an eigenvalue of $\calJ$;
\item If $z\in\calS_-$, then we say that $\pi(z)$ is a resonance of $\calJ$;
\item If $z\in\calS_-$ and $\imag \pi(z)=0$, then we say that $\pi(z)$ is an anti-bound state of $\calJ$.
\end{itemize}
\end{definition}
\begin{remark}
Eigenvalues correspond to poles in $\calS_+\setminus \pi^{-1}(\fre)$ since (see Lemmas~\ref{merom} and~\ref{merom_fin}) exponentially decaying or finite range perturbations of periodic Jacobi matrices cannot have any point spectrum on $\fre$.
%
\end{remark}

We will also need the Herglotz representation theorem. The matrix-valued version is taken from~\cite[Thm 5.4]{Ges}.

\begin{lemma}\label{herglotz}
Let $\mathfrak{m}$ be a $p\times p$ matrix-valued Herglotz function. Then there exist a $p \times p$ matrix-valued measure $\mu$ on $\bbR$ satisfying $\int_\bbR \frac{1}{1+x^2}d\mu(x)<\infty$, and constant matrices $C=C^*, D\ge\bdnot$ such that
\begin{equation*}\label{herg1}
\mathfrak{m}(z)=C+Dz+\int_\bbR \left(\frac{1}{x-z}-\frac{x}{1+x^2}\right) d\mu(x), \quad z\in \bbC_+.
\end{equation*}
The absolutely continuous part of $\mu$ can be recovered from this representation by
\begin{equation*}\label{herg2}
f(x)\equiv\frac{d\mu}{dx}=\pi^{-1}\lim_{\veps\downarrow0} \imag \mathfrak{m}(x+i \veps),
\end{equation*}
and the pure point part by
\begin{equation*}\label{herg3}
\mu(\{\lambda\})=\lim_{\veps \downarrow 0} \veps\, \imag \mathfrak{m}(\lambda+i\veps) = \tfrac{1}{i} \lim_{\veps\downarrow 0} \veps\, \mathfrak{m}(\lambda+i\veps).
\end{equation*}
\end{lemma}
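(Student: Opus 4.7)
The plan is to reduce the matrix-valued statement to the well-known scalar Herglotz representation theorem via polarization. First, for each fixed unit vector $v \in \bbC^p$, the function $z \mapsto \langle v, \mathfrak{m}(z) v\rangle$ is a scalar Herglotz function, since $\imag \langle v, \mathfrak{m}(z) v\rangle = \langle v, \tfrac{1}{2i}(\mathfrak{m}(z)-\mathfrak{m}(z)^*) v\rangle$, and the matrix $\tfrac{1}{2i}(\mathfrak{m}(z)-\mathfrak{m}(z)^*)$ is positive semidefinite by assumption. The classical scalar Herglotz representation theorem then produces, for each such $v$, a scalar constant $c_v \in \bbR$, a number $d_v \ge 0$, and a positive Borel measure $\mu_v$ on $\bbR$ with $\int (1+x^2)^{-1} d\mu_v<\infty$, giving the desired integral representation of $\langle v, \mathfrak{m}(\cdot)v\rangle$.

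Next I would promote these scalar objects to their matrix analogues. For the diagonal entries $\mathfrak{m}_{jj}$, take $v = e_j$; for the off-diagonal entries, use the standard polarization identities applied to $e_j\pm e_k$ and $e_j\pm i e_k$ to express $\mathfrak{m}_{jk}$ as a complex linear combination of four scalar Herglotz functions. Define the $(j,k)$-entry of the matrix-valued measure $\mu$ (and similarly of $C$ and $D$) as the corresponding linear combination of the scalar measures (respectively scalar constants). A routine check shows that the resulting matrix-valued object reproduces $\mathfrak{m}$ via the stated formula. The self-adjointness of $C$, the positivity of $D$, and more importantly the positive semidefiniteness of the matrix measure $\mu$ (in the sense that $\mu(A)\ge\bdnot$ for every Borel set $A$) will follow from the fact that $\langle v,\mu(A)v\rangle = \mu_v(A)\ge 0$ for every $v$; the integrability condition $\int (1+x^2)^{-1}\,d\mu(x)<\infty$ follows entry-wise from the scalar case.

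Finally, the recovery formulas follow from entry-wise application of the scalar Stieltjes inversion formulas. For the a.c.\ density, $\imag \mathfrak{m}(x+i\veps)$ converges to $\pi\,d\mu_{\ac}/dx$ in the sense of distributions (and a.e.) for each matrix entry, giving the formula for $f$. For the pure point part, $\veps\,\mathfrak{m}(\lambda+i\veps)\to\tfrac{1}{i}\mu(\{\lambda\})$ by the scalar Stieltjes inversion applied to each entry and the fact that the linear term $Dz$ and the principal value part both contribute $o(1/\veps)$ at a pole.

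The main obstacle, modulo quoting the scalar Herglotz theorem as a black box, is verifying positive semidefiniteness of the matrix-valued measure $\mu$; once we record that $\langle v,\mathfrak{m}(\cdot)v\rangle$ is scalar Herglotz for \emph{every} $v\in\bbC^p$ and that its representing scalar measure is $\langle v,\mu(\cdot)v\rangle$, this follows immediately.
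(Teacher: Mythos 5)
Your plan is sound, but note that the paper does not prove this lemma at all: it is imported verbatim from the cited reference (Gesztesy--Tsekanovskii, Thm 5.4), so there is no in-paper argument to compare against. What you propose --- reduce to the scalar Herglotz representation via the quadratic forms $\langle v,\mathfrak{m}(z)v\rangle$ and recover the off-diagonal entries by polarization --- is precisely the standard proof of the matrix-valued version, essentially the one used in that reference, and all your steps are correct. The only place I would ask you to be more explicit is the identification $\langle v,\mu(\cdot)v\rangle=\mu_v$, from which you deduce $\mu(A)\ge\bdnot$, $C=C^*$, and $D\ge\bdnot$: the polarization construction hands you, for each $v$, \emph{some} representation of the scalar Herglotz function $\langle v,\mathfrak{m}(\cdot)v\rangle$ with an a priori complex measure $\langle v,\mu(\cdot)v\rangle$ and complex constants $\langle v,Cv\rangle$, $\langle v,Dv\rangle$, and to conclude these coincide with the triple $(c_v,d_v,\mu_v)$ produced by the scalar theorem you need uniqueness of the Herglotz representation in the class of complex measures with $\int(1+x^2)^{-1}d|\nu|<\infty$ (obtained, e.g., from Stieltjes inversion applied to real and imaginary parts, plus $d=\lim_{y\to\infty}\mathfrak{m}(iy)/(iy)$ and $c=\ree$ of the value at $i$). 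Once that uniqueness is recorded, positivity does follow immediately as you say, and the two inversion formulas are indeed entry-wise consequences of the scalar de la Vall\'ee Poussin and Stieltjes inversion statements.
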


Because we will be discussing meromorphic continuations, the next lemma will prove to be useful.

\begin{lemma}[\cite{Greenstein}]\label{greenstein}
Let $m$ be as in \eqref{m}. Then $m$ can be analytically continued from $\calS_+ \cap \bbC_+$ through an interval $I\subset\bbR$ if and only if the associated measure $\mu$ is purely absolutely continuous on $I$, and the density $f(x)=\frac{d\mu}{dx}$ is real-analytic on $I$. In this case, the analytic continuation of $m$ into some domain $\mathcal{D}_-$ of $\calS_-\cap \bbC_-$ is given by
\begin{equation*}
m(z_-)=\overline{m(\bar{z}_+)}+2\pi i f(z), \quad z\in\pi(\mathcal{D}_-),
\end{equation*}
where $f(z)$ is the complex-analytic continuation of $f$ to $\mathcal{D}_-$.
\end{lemma}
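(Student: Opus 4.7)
The plan is to reduce the statement to the classical Plemelj--Sokhotski jump formula for Cauchy integrals and then translate the conclusion back to the Riemann surface $\calS_\fre$. The key observation is that, for any finite Borel measure $\mu$ on $\bbR$, the Stieltjes transform $m(z)=\int \frac{d\mu(x)}{x-z}$ is defined and analytic on all of $\bbC\setminus\supp\mu$; the question is whether the two branches on $\bbC_+$ and $\bbC_-$ glue analytically through $I$.

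For the ``if'' direction, suppose $\mu$ is purely a.c.\ on $I$ with real-analytic density $f$. Real-analyticity gives a complex-analytic extension $f(z)$ to some neighborhood $\Omega\subset\bbC$ of $I$. Define
\begin{equation*}
\tilde m(z):=m(z)+2\pi i f(z),\qquad z\in \Omega\cap \bbC_-,
\end{equation*}
where $m(z)$ on the right is the original Stieltjes integral evaluated in the lower half-plane. The Herglotz inversion formula of Lemma~\ref{herglotz}, applied to the a.c. part of $\mu$ on $I$, together with the Poisson-kernel identity $\imag m(x+i\veps)\to \pi f(x)$, yields the standard jump relation
\begin{equation*}
m(x+i0)-m(x-i0)=2\pi i f(x),\qquad x\in I.
\end{equation*}
Thus the boundary value of $\tilde m$ from below on $I$ coincides with the boundary value of $m$ from above. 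Both functions are continuous up to $I$ and holomorphic on their respective sides, so Morera's theorem glues them into a single holomorphic function on $(\bbC_+\cup\Omega\cup\bbC_-)$ near $I$. Lifting to $\calS_\fre$, this is precisely the analytic continuation of $m$ from $\calS_+\cap\bbC_+$ through $I$ into a domain $\calD_-\subset \calS_-\cap \bbC_-$; the explicit formula in the lemma follows by observing that for $z\in\bbC_-$ the Herglotz symmetry $m(\bar w)=\overline{m(w)}$ gives $\overline{m(\bar z_+)}=m(z)$, so $\tilde m(z)=\overline{m(\bar z_+)}+2\pi i f(z)$.

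For the ``only if'' direction, suppose $m$ continues analytically from $\calS_+\cap\bbC_+$ across $I$ to some $\calD_-\subset\calS_-\cap\bbC_-$; call the extension $\tilde m$. Then $m(x+i0)=\tilde m(x)$ exists as a finite continuous (in fact real-analytic) function on $I$. For any $[c,d]\subset I$, the Stieltjes inversion formula and dominated convergence give
\begin{equation*}
\mu([c,d])=\lim_{\veps\downarrow 0}\frac{1}{\pi}\int_c^d \imag m(x+i\veps)\,dx=\frac{1}{\pi}\int_c^d \imag \tilde m(x)\,dx,
\end{equation*}
so $\mu\lceil I$ is absolutely continuous with density $f(x)=\pi^{-1}\imag \tilde m(x)$, which is real-analytic since $\tilde m$ is holomorphic on a neighborhood of $I$. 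Absence of point masses on $I$ is also automatic, since $\veps\,\tilde m(x+i\veps)\to 0$ by boundedness of $\tilde m$ near $I$, and the pure-point formula in Lemma~\ref{herglotz} then forces $\mu(\{x\})=0$.

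I expect no serious obstacle: the analytic content is entirely classical Plemelj--Sokhotski plus Stieltjes inversion. The only care needed is bookkeeping on $\calS_\fre$, namely verifying that the gluing rule ``$\calS_+\cap\bbC_+$ through $\fre$ lands in $\calS_-\cap\bbC_-$'' matches the half-plane continuation, and that the identification $\overline{m(\bar z_+)}=m(z)$ for $z\in\bbC_-$ recasts the jump formula in the form stated in the lemma.
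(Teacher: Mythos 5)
Your proof is correct. Note that the paper does not prove this lemma at all --- it is quoted verbatim from the cited reference [Greenstein] --- so there is no in-paper argument to compare against; your self-contained derivation via the Plemelj--Sokhotski jump relation, Morera gluing across $I$, and Stieltjes inversion for the converse is exactly the standard proof of this classical fact, and the bookkeeping translating the half-plane continuation into the sheet convention of $\calS_\fre$ (using $\overline{m(\bar z_+)}=m(z_+)$ for $z\in\bbC_-$) is handled correctly. The only points worth tightening are cosmetic: the Stieltjes inversion gives $\mu((c,d))+\tfrac12\mu(\{c\})+\tfrac12\mu(\{d\})$ rather than $\mu([c,d])$, so one should take $c,d$ outside the (at most countable) set of atoms, and continuity of $m$ up to $I$ from above in the ``if'' direction should be justified by splitting off $\int_{\bbR\setminus I}$ and applying Plemelj to the H\"older (indeed real-analytic) density on $I$.
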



\section{Spectral measures}\label{spectr}\label{sSpectral}
As we are about to see, locations of the eigenvalues of Jacobi operators from~\eqref{ev} and~\eqref{exp} are required to satisfy a certain property with respect to the locations of the anti-bound states. Loosely speaking, every even-numbered real singularity (when counted starting from any of the edges of $\fre$ in the direction away from the band), cannot be an eigenvalue and therefore must be an anti-bound state. For a lack of a better term we will call it the ``oddly interlacing'' property. Note that in particular it implies (but is stronger than) the following statement: between any two consecutive eigenvalues (which are located in the same simply-connected component of $\mathcal{E}$ and are not separated by a band) there is an odd number of anti-bound states (counted according to their multiplicities).

\begin{definition}\label{OI_light}
$($Oddly interlacing property$)$ Let $\fre$ be a finite gap set
\begin{equation*}
\fre = \bigcup_{j=1}^{p} [\alpha_j,\beta_j], \quad \alpha_j<\beta_j <\alpha_{j+1},
\end{equation*}
and $\mathcal{E}$ be an open set in $\bbC$ containing $\fre$. Suppose we are given two sets $($repeated according to their multiplicities$)$ of real points: $\{e_j\}_{j=1}^N$ $(N<\infty)$ and $\{r_j\}_{j=1}^K$ $(K\le\infty)$. Denote $\{s_j\} = \{e_j\}_{j=1}^N \cup \{r_j\}_{j=1}^K  $ $($with multiplicities preserved$)$. We will say that
$$
\{e_j\}_{j=1}^N \mbox{ oddly interlace with } \{r_j\}_{j=1}^K \mbox{ on the set } \mathcal{E}
$$
if $\{e_j\}_{j=1}^N \cap \{r_j\}_{j=1}^K \cap \mathcal{E} = \varnothing$ and for all $k$, $1\le k \le p$, the following holds:
\begin{itemize}
\item Suppose $\delta>0$ is the largest number such that $(\beta_k,\beta_k+\delta)\subset\mathcal{E}\setminus\fre$, 
and let $[\beta_k,\beta_k+\delta)\cap \{s_j\}=\{x_j\}$ $($with multiplicities preserved$)$, where
    \begin{equation}\label{interlacing_light}
    \beta_k\le x_1\le x_2 \le x_3 \le \ldots .
    \end{equation}
     Then $\{x_2,x_4,\ldots,\} \cap \{e_j\}_{j=1}^N = \varnothing$.

\item Suppose $\delta>0$ is the largest number such that $(\alpha_k-\delta,\alpha_k)\subset\mathcal{E}\setminus\fre$, 
and let $(\alpha_k-\delta,\alpha_k]\cap \{s_j\}=\{y_j\}$ $($with multiplicities preserved$)$, where $$\ldots\le y_3\le y_2 \le y_1 \le \alpha_k.$$ Then $\{y_2,y_4,\ldots,\} \cap \{e_j\}_{j=1}^N = \varnothing$;
\end{itemize}
\end{definition}
\begin{remark}
If $N=0$ (no eigenvalues), then this property trivially holds for any configuration of $\{r_j\}$.
\end{remark}

Now we can state the characterization of the spectral measures. Let us define
\begin{equation*}
\sg_\fre(x) = \begin{cases}
(-1)^{p-k} & \mbox{ if } x\in (\alpha_k,\beta_k), \\
0 & \mbox{ otherwise},
\end{cases}
\end{equation*}
a function that changes sign from one band to another. Let
\begin{equation}\label{r}
r(x)=\prod_{j=1}^p (x-\alpha_j)(x-\beta_j).
\end{equation}

\begin{theorem}\label{sp}
Let $R>1$ and $\calE$ be as in Definition~\ref{domainE}. The following are equivalent:
\begin{itemize}
\item[$(P_R)$] Jacobi matrix $\calJ\equiv (a_n,b_n)_{n=1}^\infty$  satisfies
  \begin{equation*}
  \limsup_{n\to\infty} \left(|a_n-a_n^{\circ}| + |b_n-b_n^{\circ}|\right)^{1/2n}  \le R^{-1},
  \end{equation*}
  where $(a_n^{\circ},b_n^{\circ})_{n=1}^\infty$ is a periodic Jacobi matrix from $\calT_\fre$.
\item[$(S_R)$] Spectral measure $\mu$ of $\calJ$ is of the form
\begin{equation}\label{meas}
d\mu(x) = \frac{\sqrt{|r(x)|}}{|a(x)|} 1_{x\in\fre} dx + \sum_{j=1}^N w_j \delta_{E_j},
\end{equation}
where
\begin{itemize}
   \item[$(S_a)$] $a(z)$ is a real-analytic function in $\calE$ which satisfies $\sgn \,a(x) = \sg_\fre(x)$ on $\operatorname{Int}(\fre)$;
  \item[$(S_b)$]  $N<\infty$ and $E_j\in \bbR\setminus\fre$. Each $E_j$ that belongs to $\calE$ is a simple zero of $a(z)$. Moreover, 
  $\{E_j\}_{j=1}^N$  oddly interlace $($Def.~\ref{OI_light}$)$ with $($anti-bound states$)$ $\{R_j\}_{j=1}^{K}$ on the set $\calE$. Here $\{R_j\}_{j=1}^K $ are defined to be the zeros of $a(z)$ in $\bbR\setminus\{E_j\}_{j=1}^N$, repeated according to their multiplicities; 
  \item[$(S_c)$] For each  $1\le j \le N$, if $E_j\in \calE$ then
  \begin{equation}\label{weights}
  w_j= 2\pi \Big| \res_{z=E_j}\frac{\sqrt{r(z)}}{a(z)} \Big|.
  \end{equation}
\end{itemize}
\end{itemize}
\end{theorem}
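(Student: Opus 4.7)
The theorem claims an equivalence, so the proof splits into the direct implication $(P_R)\Rightarrow(S_R)$ and the inverse $(S_R)\Rightarrow(P_R)$. Both are mediated by Lemma~\ref{merom}, and the central bookkeeping object is a Jost-type function $a$ on $\calE$ defined (up to branch-of-root conventions) by
$$
a(z)\;=\;\frac{-2\,y(z)}{m(z)-m^\sharp(z)},\qquad y(z)=\sqrt{r(z)}.
$$
Both $y$ and $m-m^\sharp$ are $\sharp$-antisymmetric, so the ratio descends to a single-valued function of $\pi(z)\in\calE$, and this $a$ will carry all the spectral data of $(S_a)$--$(S_c)$.

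\textbf{Direct direction.} Starting from $(P_R)$, Lemma~\ref{merom} provides properties $(M_a)$--$(M_d)$. Lemma~\ref{greenstein} identifies the a.c.\ density on $\mathrm{Int}(\fre)$ as $(m-m^\sharp)/(2\pi i)$, which after substituting the definition of $a$ yields $f(x)=\sqrt{|r(x)|}/|a(x)|$; the sign pattern $\sg_\fre$ of $a$ on the bands comes from tracking the standard branch of $y$. Conditions $(M_b)$ and $(M_c)$ ensure that zeros and poles of the ratio cancel outside the branch points, so $a$ is real-analytic on $\calE$, establishing $(S_a)$. For $(S_c)$, each eigenvalue $E_j\in\calE$ gives a simple pole of $m$ on $\calS_+\cap\pi^{-1}(\calE)$ with residue $-w_j$; by $(M_d)$ the other sheet carries no pole, so $\res_{E_{j,+}}(m-m^\sharp)=-w_j$. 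Matching this with $-2y(E_{j,+})/a'(E_j)$ gives both \eqref{weights} and the fact that $a'(E_j)\neq 0$, i.e., $E_j$ is a simple zero of $a$. The oddly interlacing assertion of $(S_b)$ is the subtle point: on $\calS_+\cap\bbR$ both $m$ and $m^\sharp$ are real-valued and $m$ is strictly Herglotz-increasing between its eigenvalue poles, while poles of $m^\sharp$ on $\calS_+$ are precisely the anti-bound states; analysing the sign of the real-analytic function $a$ in each component of $\calE\setminus\fre$, starting from a band edge where $y$ has a simple zero, forces the alternation of Definition~\ref{OI_light}, with every second real zero of $a$ away from the edge being a pole of $m^\sharp$ rather than of $m$.

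\textbf{Inverse direction.} Given $(S_R)$, define $m(z)=\int d\mu(x)/(x-z)$. Because $f(z)=\sqrt{r(z)}/a(z)$ is meromorphic on $\calE$ by $(S_a)$, Lemma~\ref{greenstein} extends $m$ through every band via $m(z_-)=m(\bar z_+)+2\pi i f(z)$, and iterating the sheet-reflection produces a meromorphic continuation to all of $\calS_R$, giving $(M_a)$. Integrability of $\sqrt{|r|}/|a|$ at the band edges yields $(M_b)$. Inverting the definition of $a$ gives the identity $m-m^\sharp=-2y/a$, from which $(M_c)$ follows because $a$ has no poles in $\calE$ and $y$ vanishes only simply at the band edges. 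Finally $(M_d)$ is enforced by $(S_b)$: simplicity of the zero of $a$ at each $E_j\in\calE$ makes $m-m^\sharp$ have a simple pole there, and the residue identity from $(S_c)$, combined with the sign forced by the oddly interlacing condition, places that pole on the $\calS_+$ side rather than on $\calS_-$. Lemma~\ref{merom} then delivers $(P_R)$.

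The main obstacle is the interplay between the oddly interlacing condition and the positivity of the weights. In the direct direction one must extract the pattern in Definition~\ref{OI_light} from Herglotz monotonicity of $m$ together with $(M_c)$; in the inverse direction exactly the same pattern is what guarantees that \eqref{weights} produces \emph{positive} $w_j$ --- equivalently, that $-2y(E_{j,+})/a'(E_j)>0$, so that the pole of $m-m^\sharp$ at $E_j$ sits on the physical sheet $\calS_+$ rather than on $\calS_-$. Establishing this sign correspondence case-by-case around each band edge in $\calE$, and ruling out accidental coincidences between eigenvalues and anti-bound states at the boundary of a bounded component of $\calE$, is the geometric heart of the argument; the remaining steps amount to bookkeeping with Lemmas~\ref{merom} and~\ref{greenstein}.
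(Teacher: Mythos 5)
Your proposal is correct and follows essentially the same route as the paper: both directions are reduced to the $m$-function criterion of Lemma~\ref{merom}, the function $a$ is built as a constant multiple of $\sqrt{r}/(m-m^\sharp)$ and descends to $\calE$ by $\sharp$-antisymmetry, the weights come from the residue computation at $(E_j)_+$ using $(M_d)$, and the oddly interlacing property (resp.\ positivity of the $w_j$ in the converse) is extracted from exactly the sign analysis of $m-m^\sharp$ near the band edges that you describe.
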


\begin{remarks}
1. Note that Definition~\ref{singularity} allows the situation when a point is a resonance and an eigenvalue simultaneously. However, as seen in Lemma~\ref{merom} and~\ref{merom_fin}, this can never happen in $\calE$. 
As is clear from the proof, in this case resonances occur precisely at those zeros of the denominator of $a(z)$ that are not eigenvalues. This means that points $\{R_j\}_{j=1}^K$ defined in $(S_b)$ are anti-bound states from Definition~\ref{singularity}.

2. We stress that $(S_b)$ is a statement about which points are allowed to be eigenvalues. There is no implicit restriction on $a(z)$ here, and any function $a(z)$ that satisfies ($S_a$) (up to a multiplicative normalization constant) can occur in~\eqref{meas}.

3. Similarly, $(S_c)$ is a statement about eigenweights only (again, up to an inconsequential normalization). Indeed, note that each $w_j$ in~\eqref{weights} is positive, so there is no implicit positivity restriction here either (unlike the canonical weights in~\cite{Geronimo} or~\cite{DS2}). 

\end{remarks}

\begin{theorem}\label{sp_fin}
The following are equivalent:
\begin{itemize}
\item[$(P_{f.r.})$] Jacobi matrix $\calJ\equiv (a_n,b_n)_{n=1}^\infty$ is eventually periodic, i.e., satisfies
  \begin{equation*}
(a_n,b_n)_{n=s}^\infty\in\calT_\fre \quad \mbox{ for  some }s.
  \end{equation*}
\item[$(S_{f.r.})$] Spectral measure $\mu$ of $\calJ$ is of the form
\begin{equation}\label{meas2}
d\mu(x) = \frac{\sqrt{|r(x)|}}{|a(x)|} 1_{x\in\fre} dx + \sum_{j=1}^N w_j \delta_{E_j},
\end{equation}
where
\begin{itemize}
\item[$(S'_a)$] $a(z)$ is a real polynomial which satisfies $\sgn \,a(x) = \sg_\fre(x)$ on $\operatorname{Int}(\fre)$; 
  \item[$(S'_b)$]  $N<\infty$ and $E_j\in \bbR\setminus\fre$. Each $E_j$ is a simple zero of $a(z)$. Moreover, 
  $\{E_j\}_{j=1}^N$  oddly interlace $($Def.~\ref{OI_light}$)$ with $($anti-bound states$)$ $\{R_j\}_{j=1}^{K}$ on the set $\bbC$. Here $\{R_j\}_{j=1}^K $ are defined to be the zeros of $a(z)$ in $\bbR\setminus\{E_j\}_{j=1}^N$, repeated according to their multiplicities;
  \item[$(S'_c)$] For each  $1\le j \le N$, 
  \begin{equation}\label{weights2}
  w_j= 2\pi \Big| \res_{z=E_j}\frac{\sqrt{r(z)}}{a(z)} \Big|.
  \end{equation}
\end{itemize}
\end{itemize}
\end{theorem}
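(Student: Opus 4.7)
The plan is to derive Theorem~\ref{sp_fin} from Lemma~\ref{merom_fin} via a dictionary between $m$-functions on $\calS_\fre$ and their spectral measures, paralleling what one expects for Theorem~\ref{sp}. The extra leverage over the exponential case is that $m$ is now meromorphic on the \emph{compact} Riemann surface $\calS_\fre$, hence rational, so the real-analytic objects of the exponential case become genuine rational functions on $\bbC$.

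For $(P_{f.r.}) \Rightarrow (S_{f.r.})$: Lemma~\ref{merom_fin} upgrades $m$ to a rational function on $\calS_\fre$. Decompose $m(z) = P(z) + Q(z)\sqrt{r(z)}$ where $P = (m+m^\sharp)/2$ and $Q = (m-m^\sharp)/(2\sqrt{r})$; both are invariant under the sheet involution and hence rational in $z\in\bbC$. Set $a(z) := c/Q(z)$ for a suitable real normalizing constant $c$. The local expansion of $m$ in the uniformizer $u=\sqrt{z-\alpha_j}$ at each branch point forces $m-m^\sharp$ to vanish at least to first order in $u$ there, and condition $(M_c')$ rules out all other zeros; this exactly cancels the simple zero of $\sqrt{r}$ at each branch point, so $Q$ has no zeros at finite points and $a$ is a polynomial. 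Its zeros in $\bbC$ are the projections of poles of $m$ on $\calS_\fre$; by $(M_d')$ these projections are distinct, splitting unambiguously into eigenvalues $\{E_j\}$ (poles on $\calS_+$) and anti-bound states $\{R_k\}$ (poles on $\calS_-$), each a simple zero of $a$. The a.c. density then emerges from the Herglotz formula of Lemma~\ref{herglotz} via
\[
\frac{d\mu}{dx}(x) \;=\; \frac{1}{\pi}\imag m(x+i0) \;=\; \frac{1}{2\pi i}(m-m^\sharp)(x_+) \;=\; \frac{\sqrt{|r(x)|}}{|a(x)|},
\]
where the last step uses $r<0$ on $\fre$ and the choice of $c$ making $\sgn a = \sg_\fre$ on $\intt(\fre)$ (equivalently, positivity of the density). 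The weight formula~\eqref{weights2} follows from $w_j = -\res_{z=E_j} m(z)$ applied to the same decomposition.

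For $(S_{f.r.}) \Rightarrow (P_{f.r.})$: starting from $\mu$ of the stated form, the goal is to verify hypotheses $(M_a')$--$(M_d')$ of Lemma~\ref{merom_fin}. The density $\sqrt{|r(x)|}/|a(x)|$ extends meromorphically off $\fre$ as $\sqrt{-r(z)}/a(z)$ (with a suitable branch of the square root), so Lemma~\ref{greenstein} supplies the meromorphic continuation of $m$ across each band into $\calS_-$, collectively producing $(M_a')$. Conditions $(M_b')$ and $(M_c')$ reduce to pole/zero accounting at branch points and at the zeros of $a$, using the simplicity assumption of $(S'_b)$. The decisive condition is $(M_d')$: each real zero of $a$ must be placed on the correct sheet, either $\calS_+$ (eigenvalue with positive residue given by~\eqref{weights2}) or $\calS_-$ (resonance). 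The oddly-interlacing property of Definition~\ref{OI_light}, combined with Herglotz monotonicity of $m$ on $\bbR\setminus\fre$, forces exactly this assignment: between two consecutive eigenvalues (or between a band-edge and the nearest eigenvalue), the number of anti-bound states must be odd to accommodate the required number of sign changes of $m$ along the real axis. Applying Lemma~\ref{merom_fin} then yields $(P_{f.r.})$.

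The main obstacle in both directions is the oddly-interlacing condition. In the direct direction one must extract this parity constraint from Herglotz monotonicity by careful sign-tracking of $m$ through alternating eigenvalues and anti-bound states along $\bbR$. In the inverse direction one must show conversely that this purely real-axis combinatorial condition suffices to determine the sheet of every real zero of $a$, despite making no explicit reference to $\calS_\fre$ at all.
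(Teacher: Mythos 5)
Your overall strategy coincides with the paper's: the paper proves Theorem~\ref{sp_fin} by rerunning the proof of Theorem~\ref{sp} with Lemma~\ref{merom_fin} in place of Lemma~\ref{merom}, and your decomposition $m=P+Q\sqrt{r}$ with $a=c/Q$ is exactly the paper's passage from $\tilde f=\frac{1}{2\pi i}(m-m^\sharp)$ to $\tilde f/\sqrt{\tilde{r}}=1/(ia)$; the observation that meromorphy on the compact surface forces $a$ to be a polynomial is also the paper's one added remark. Two local inaccuracies in your direct direction: $(M_b')$ permits $m$ to have a simple pole at a branch point, in which case $m-m^\sharp$ does \emph{not} vanish there but blows up like $(z-\beta_k)^{-1/2}$ (your conclusion that $Q$ has no zeros survives, since $Q$ then acquires a simple pole, i.e.\ $a$ a simple zero at that edge); and the anti-bound states need not be simple zeros of $a$ --- only the eigenvalues $E_j$ are forced to be simple, by the Herglotz property on $\calS_+$.

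The genuine gap is that the crux of the theorem --- the oddly interlacing condition --- is in both directions only announced as ``the main obstacle,'' and the mechanism you sketch is not quite the right one. In the direct direction the quantity whose sign must be tracked along a gap is not $m$ itself but $m-m^\sharp=2\pi\sqrt{r}/a$: one uses $(M_c')$ (it never vanishes inside a gap) together with $(M_d')$ ($m^\sharp$ stays finite where $m$ has an eigenvalue pole) to show that at every even-numbered real singularity counted from a band edge one has $m-m^\sharp\to-\infty$, which is incompatible with that point being an eigenvalue. In the inverse direction the issue is not ``placing each real zero of $a$ on the correct sheet'' --- the location of every pole of the continued $m$ is already determined by the measure --- but verifying $(M_d')$: one must show that the residue of $m$ at $(E_j)_-$ vanishes, which requires the \emph{signed} identity $w_j=-2\pi\res_{z=E_j}\sqrt{r}/a$; the sole role of the interlacing hypothesis here is to guarantee that this signed residue is positive, hence equal to the absolute value prescribed in~\eqref{weights2}. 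Without these two arguments the proof is incomplete at precisely the point where the theorem's novel condition enters.
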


\begin{remarks}
1. It may be tempting to take $a$ to be a constant function $a(z)=A$ in a hope to get an eventually periodic Jacobi matrix. A constant function is indeed entire, but it does not satisfy the sign condition in $(S'_a)$ (unless $p=1$). In fact, $d\mu(x)=\tfrac1A \sqrt{|r(x)|} dx$  does not even correspond to a super-exponential decay $R=\infty$. Indeed, going back to Theorem~\ref{sp}, one can see that we get \textit{finite} rate of exponential decay equal to the largest $R$ for which $\calE$ consists of $p$ \textit{disjoint} pieces. Indeed, if $\calE$ contains $p$ disjoint pieces, then one can just redefine signs of $\pm A$ on each of the bands, and Theorem~\ref{sp} can be applied.

2. In fact, $(S'_a)$ implies that there must be at least $p-1$ singularities.

3. Again, $(S'_b)$ and $(S'_c)$ do not impose any additional restrictions on $a$. So up to a multiplicative normalization constant, any polynomial $a(z)$ that satisfies the sign condition in $(S'_a)$ and keeps~\eqref{meas2} integrable (that amounts to no zeros on $\Int(\fre)$ and at most simple ones at the edges) can occur as the denominator of the spectral measure~\eqref{meas2}.
\end{remarks}

\begin{proof}[Proof of Theorem~\ref{sp}]
For the duration of the proof, let us choose the branch of the square root for the complex function $\sqrt{r(z)}$ (defined on $\bbC\setminus\fre$) that is positive on $(\beta_p,+\infty)$. 
Note that this function changes sign from one gap to another. 
Using this, one can see that \textit{assuming $(S_a)$ and $(S_b)$}, the expression in~\eqref{weights} can be equivalently written as
\begin{equation}\label{weights_complex}
  w_j=-2\pi\res_{z=E_j}\frac{\sqrt{r(z)}}{a(z)}.
\end{equation}
Indeed, let us show that in this case each~\eqref{weights_complex} is automatically positive.  Suppose  that $E_j\in(\beta_p,+\infty)\cap \calE$. By $(S_b)$ $E_j$ is a pole of $a(z)$ of order $1$, i.e.,~\eqref{weights_complex} is $-2\pi \tfrac{\sqrt{r(E_j)}}{a'(E_j)}$. By $(S_a)$, $a(z)$ is positive on $(\alpha_p,\beta_p)$, and by the oddly interlacing property, there is an even number of  zeros of $a(z)$ (counting with multiplicities) on the interval $[\beta_p,E_j)$ . Thus $a'(E_j)<0$, and since $\sqrt{r(E_j)}>0$ is positive on $(\beta_p,+\infty)$, we conclude that $w_j>0$. The argument for $E_j$'s on any of the gaps or on $(-\infty,\alpha_1)$ are similar if one uses the sign condition on $a(z)$ from $(S_a)$ and the sign changes of $\sqrt{r(z)}$.

Let us also define the function  $\sqrt{\til{r}(z)}:\calS_\fre \to \bbC$ that coincides with $\sqrt{r(z)}$ on $\calS_+$, and equals to its negative on $\calS_-$. That is, $\sqrt{\til{r}(\bar{z}^\sharp)}=-\sqrt{\til{r}(z)}$. It is not hard to see that this function is in fact meromorphic on the whole surface $\calS_\fre$.

\smallskip

$(P_R) \Rightarrow (S_R)$ We apply Lemmas~\ref{merom} and \ref{herglotz}. $(P_R)$ implies $(M_R)$. By ($M_a$) and ($M_b$) we can see that $\mu(x)$ consists of an absolutely continuous part $f(x)dx$ on $\fre$ and finitely many point masses outside of $\fre$ (finiteness comes from the fact that poles of a meromorphic function $m$ cannot have an accumulation point in the interior of the domain of meromorphicity).



By $(M_a)$ $m$ has a meromorphic continuation to $\pi^{-1}(\calE)$. Let us first apply Lemma~\ref{greenstein} to $m$ on the domain of $[(\calS_+\cap\bbC_+) \cup (\calS_-\cap\bbC_-)] \cap \pi^{-1}(\calE)$. 
We obtain that $f(x)$ has a complex-meromorphic continuation $f(z)$ to $(\calE \setminus \bbR)\cup\fre$, and
\begin{equation}\label{secondSheetM}
m(z_-) = \overline{m(\bar{z}_+)} + 2\pi i f(z), \quad z\in \bbC_-\cap \calE.
\end{equation}
Note that $f(z)$ is a function of a complex variable $z\in\bbC$.

Let us now apply Lemma~\ref{greenstein} on the domain of $[(\calS_+\cap\bbC_-) \cup (\calS_-\cap\bbC_+)] \cap \pi^{-1}(\calE)$. Note that $\imag m((x-i0)_+) = -\pi f(x)$, which means that
\begin{equation}\label{secondSheetM2}
m(z_+)= \overline{m(\bar{z}_-)} - 2\pi i f(z), \quad z\in\bbC_-\cap  \calE.
\end{equation}

Now using $\overline{m(\bar{z}_+)}=m(z_+)$, we obtain
\begin{equation}\label{symmetry}
\overline{m(z_-)}=m(\bar{z}_-),  \quad z\in \calE. 
\end{equation}
In particular (see Definition~\ref{sharp}),
\begin{equation}\label{msharp}
m^\sharp(z)=m(\bar{z}^\sharp),  \quad z\in \calS_R. 
\end{equation}

Let us lift $f(z)$ to a function $\tilde{f}(z): \calS_\fre \to \bbC$ as follows. We define
\begin{align*}
\tilde{f}(z) = f(\pi(z)), \quad &\mbox{if } z\in \calS_+\cap\bbC_+ \cap \pi^{-1}(\calE), \\
\tilde{f}(z) = f(\pi(z)), \quad &\mbox{if } z\in \calS_-\cap\bbC_- \cap \pi^{-1}(\calE), \\
\tilde{f}(z) = -f(\pi(z)), \quad &\mbox{if } z\in \calS_+\cap\bbC_- \cap \pi^{-1}(\calE), \\
\tilde{f}(z) = -f(\pi(z)), \quad &\mbox{if } z\in \calS_-\cap\bbC_+ \cap \pi^{-1}(\calE).
\end{align*}

Then~\eqref{secondSheetM} and~\eqref{secondSheetM2} imply that
\begin{equation}\label{tildeF}
\tilde{f}(z)=\tfrac{1}{2\pi i}(m(z)-m^\sharp(z)), \quad z\in \pi^{-1}(\calE)
\end{equation}
holds, and so $\tilde{f}$ is meromorphic on $\pi^{-1}(\calE)$.


 By \eqref{msharp} we also have $\tilde{f}(\bar{z}^\sharp) = -\tilde{f}(z)$, and therefore
$$
\frac{\tilde{f}(\bar{z}^\sharp)}{\sqrt{\til{r}(\bar{z}^\sharp)}} = \frac{\tilde{f}(z)}{\sqrt{\til{r}(z)}}.
$$
Thus $\frac{\tilde{f}(z)}{\sqrt{\til{r}(z)}}$ is a function of $\pi(z)$, so it can be viewed as a function of \textit{complex} variable (that is, $\calE\to\bbC$ rather than $\pi^{-1}(\calE) \to \bbC$). We claim it is meromorphic on $\calE$. This is clear for all points except for the branch points. Observe that around a branch point, say $z=\beta_k$, we have Taylor's expansion
\begin{equation}\label{taylor}
m(z)=\sum_{j=-1}^\infty k_j (z-\beta_k)^{j/2}
\end{equation}
Indeed, recall that local coordinates at a branch point $\beta_k$ are given in terms of $(z-\beta_k)^{1/2}$, not $z-\beta_k$; and that $m$ has at most order $1$ pole at $\beta_k$ by ($M_b$).
Then $m^\sharp(z)=\sum_{j=-1}^\infty (-1)^{j} k_j (z-\beta_k)^{j/2}$, and therefore
$$
\tilde{f}(z)=\tfrac{1}{2\pi i}(m(z)-m^\sharp(z)) = \tfrac{1}{\pi i}\sum_{j=-1}^\infty k_{2j+1}(z-\beta_k)^{j+1/2}.
$$
Since $\sqrt{\til{r}(z)}^{-1} = (z-\beta_k)^{-1/2} \sum_{j=0}^\infty \hat{k}_j (z-\beta_k)^j$ for some constants $\hat{k}_{2j+1}$, we obtain
$$
\frac{\tilde{f}(z)}{\sqrt{\til{r}(z)}}= \frac{1}{\pi i} \sum_{j=-1}^\infty \hat{\hat{k}}_{2j+1} (z-\beta_k)^{j},
$$
which has no branching at the point $\beta_k$. Therefore $\frac{\tilde{f}(z)}{\sqrt{\til{r}(z)}}$  can be viewed as a meromorphic function on $\calE$ {in $z\in\bbC$ variable}, which we will denote by $\frac{1}{ia(z)}$ for convenience. Therefore
$$
\tilde{f}(z)=\frac{\sqrt{\til{r}(z)}}{ia(\pi(z))}.
$$
Carefully examining the signs, one can see that on $\pi^{-1}(\fre)\cap \calS_+$ the last expression is equal to $f(x)=\frac{\sqrt{|r(x)|}}{a(x)}\sg_\fre(x)$.

Since the density $f(x)$ is a nonnegative function on $\fre$, we conclude that $a(z)$ is real for $z\in\bbR$ and satisfies $\sgn \,a(x) = \sg_\fre(x)$ on $\fre$.

In fact, $a(z)$ is \textit{analytic} on $\calE$, since we know by ($M_c$) that $\til{f}(z)$ has no zeros on $\calE$ except possibly for the band edges where the zeros are at most simple.

This establishes $(S_a)$.

To see $(S_c)$, suppose that $E_j\in \calE$ is an eigenvalue of $\calJ$. Let us put $z=(E_j)_+$ in~\eqref{tildeF}, and take residues of both sides. Since $(E_j)_-\in\calS_-$ cannot be a pole of $m$ by ($M_d$), the residue of the right-hand side is $\tfrac{1}{2\pi i} (-w_j -0)$, while the residue of the left-hand side is 
$$
\tfrac{1}{i} \res_{z=E_j} \frac{\sqrt{r(z)}}{a(z)},
$$
which shows that $E_j$ is a simple zero of $a$ and that~\eqref{weights_complex} holds. By the discussion in the beginning of the proof, this will establish $(S_c)$ if we can show $(S_b)$.


Note that every eigenvalue $E_j\in \bbR\setminus \fre$ by ($M_b$). To show the oddly interlacing property, let us order the real singularities $\{x_j\}$  of $\calJ$ on $[\beta_k,\beta_k+\delta)\subset \calE$ as in~\eqref{interlacing_light}.
We need to show that $x_j$ is a resonance for every even $j$.

Just as before, let the Taylor's series of $m$ at $\beta_k$ be~\eqref{taylor}, so that
\begin{equation}\label{temp1}
m(z)-m^\sharp(z) =2k_{-1}(z-\beta_k)^{-1/2}+ 2k_1 (z-\beta_k)^{1/2} + O((z-\beta_k)^{3/2}).
\end{equation}
By~\eqref{tildeF},~\eqref{temp1} belongs to $i\bbR_+$ on $\pi^{-1}((\alpha_k,\beta_k))\cap \calS_+$. Choosing for definiteness $(z-\beta_k)^{1/2}$ to be positive for $z>\beta_k$, we can see that this forces
 either $k_{-1}<0$, or $k_{-1}=0$ and $k_1>0$ (note that it is not possible to have $k_{-1}=k_1=0$ because of \eqref{temp1} and the fact that zeros of $m-m^\sharp$ at the band edges are at most simple by ($M_c$)).

Now, if $k_{-1}<0$, then $m-m^\sharp$ is negative to the right of $\beta_k$. Since in this case $\beta_k$ is a first order pole, $x_1=\beta_k$. Note that $m-m^\sharp$ cannot be equal to zero by ($M_c$) on $(\beta_j,\alpha_{j+1})$, and thus
\begin{equation}\label{residue}
\lim_{z\to (x_2-0)_+} m(z)-m^\sharp(z)=-\infty
\end{equation}
This implies that $x_2$ must be a resonance: indeed, if $x_2$ were an eigenvalue, then $\lim_{z\to (x_2-0)_+} m(z)=+\infty$ (since $\lim_{x\to x_2-0} \tfrac{1}{x_2-x} = +\infty$) and $\lim_{z\to (x_2-0)_+} m^\sharp(z)$ is finite by ($M_d$), which would contradict to \eqref{residue}.

Now, if $k_{-1}=0$ and $k_1>0$, then $m-m^\sharp$ is positive to the right of $\beta_k$. Since $m-m^\sharp$ cannot be equal to zero, we obtain $$\lim_{z\to (x_1-0)_+} m(z)-m^\sharp(z)=+\infty.$$
If $x_1=x_2$ then it is a resonance, since $m$ is Herglotz on $\calS_+$ and therefore cannot have second order poles there. If $x_1\ne x_2$, then using non-vanishing of $m-m^\sharp$ again,
\begin{align*}
&\lim_{z\to (x_1+0)_+} m(z)-m^\sharp(z)=-\infty,\\
&\lim_{z\to (x_2-0)_+} m(z)-m^\sharp(z)=-\infty,
\end{align*}
which implies that $x_2$ is a resonance by the same arguments as above. Checking the signs of $m-m^\sharp$ further, one sees that~\eqref{residue} holds at any $x_j$ with even $j$, which means they are resonances.

The arguments for $y_j$'s are analogous.

\bigskip

$(S_R) \Rightarrow (P_R)$ Assume $\mu$ is \eqref{meas} and satisfies $(S_a)$, $(S_b)$, $(S_c)$.


Now we want to show that the $m$-function \eqref{m} of $\mu$ satisfies conditions ($M_R$) of Lemma~\ref{merom}.

On $\fre$ the absolutely-continuous density $f(x)=\tfrac{d\mu(x)}{dx}$ of $\mu$ coincides with the function $\frac{\sqrt{r(z)}}{i a(z)}$, which, as above, can be  lifted to a meromorphic function $\tilde{f}$ on $\calS_R$. Now we can define $m^\sharp$ via~\eqref{tildeF} and then using \eqref{secondSheetM}--\eqref{secondSheetM2} see that $m^\sharp$ is indeed the meromorphic continuation of $m$.
This proves ($M_a$). ($M_b$) follows Lemma~\ref{herglotz} and $E_j\notin\fre$.
($M_c$) follows from $m-m^\sharp = 2\pi i \tilde{f}$ and analyticity of $a$. Finally, if $E_j \in \calE_R\setminus\fre$ is an eigenvalue, then using~\eqref{tildeF} and~\eqref{weights_complex}, we obtain that the residue of $m$ at $z=(E_j)_-$ is zero. This proves ($M_d$) and completes the proof.
\end{proof}

\begin{proof}[Proof of Theorem~\ref{sp_fin}]
This can be proved by using the same arguments but applying Lemma~\ref{merom_fin} instead of \ref{merom}. In the process one should note that $m$ being meromorphic on the whole surface $\calS_\fre$ implies that $a(z)$ is entire and has a finite order pole at $\infty$, i.e., $a(z)$ is a polynomial.
\end{proof}

\section{Matrix-valued problem}\label{sMatrix}

For technical reasons we need to introduce some notation and derive one result (namely a Geronimo--Baxter theorem) from the spectral theory of \textit{block} Jacobi matrices. 
A reader interested in a more detailed view at this theory should see Damanik--Pushnitski--Simon~\cite{DPS} or~\cite[Chapt~4]{Rice}.

By a block Jacobi matrix/operator we call a Hermitian operator on $\ell^2(\bbZ_+)$ of the form
\begin{equation*}\label{blockJacobi}
\calJ=\left(
\begin{array}{cccc}
B_1&A_1&{\bdnot}&\\
A^*_1 &B_2&A_2&\ddots\\
{\bdnot}&A^*_2 &B_3&\ddots\\
&\ddots&\ddots&\ddots\end{array}\right),
\end{equation*}
where $B_j^*=B_j$. Here each of $A_j$ and $B_j$ is a $p\times p$ matrix. We will use the notation $(A_n,B_n)_{n=1}^\infty$ for such an operator. Its spectral measure is the $p\times p$ matrix-valued Hermitian probability  measure $\mu(x)$ whose $(j,k)$-entry $\mu_{jk}(x)$ ($1\le j,k\le p$) is defined via
\begin{equation*}\label{matrixSpectralDefinition}
\int_\bbR f(x)d\mu_{jk}(x) = \langle \delta_j, f(\calJ) \delta_k \rangle,
\end{equation*}
where $\delta_k$ is the vector having $1$ on the $k$-th position and $0$ everywhere else.

If $\calJ=(A_n,B_n)_{n=1}^\infty$ is a block Jacobi matrix, then for any integer $s\ge 0$ we define $\calJ^{(s)}$ to be $(A_{n+s},B_{n+s})_{n=1}^\infty$, that is, the block Jacobi matrix obtained from $\calJ$ by removing the first $s$ of the matrix-valued rows and corresponding columns (i.e., $sp$ scalar rows and columns).

\subsection{Geronimo--Baxter theorem for block Jacobi operators}
The equivalence $(1)\Leftrightarrow (2)$ of the next theorem is the matrix-valued analogue of $(1)\Leftrightarrow (3)$ of Theorem~\ref{baxterSuper} below, which is why we call this a Geronimo--Baxter theorem.


\begin{theorem}\label{baxterMatrix}
Suppose that
\begin{equation}\label{blockSuperExp}
\lim_{n\to\infty} (||A_n A_n^*-\bdone|| + ||B_n||)^{1/2n} = 0.
\end{equation}
Then the a.c. density of the matrix-valued spectral measure $\mu_\Delta$ of $\calJ$ is
\begin{equation}\label{acPart}
\frac{d \mu_\Delta}{dx} = \mathfrak{a}(x)^{-1} \sqrt{4-x^2} \, 1_{x\in[-2,2]},
\end{equation}
where $\mathfrak{a}(z)$ is an entire function. Moreover, the following are equivalent:
\begin{itemize}
\item[(1)]
\begin{equation*}\label{growthOrder}
\limsup_{r\to\infty} \frac{\log\log \sup_{|z|=r} ||\mathfrak{a}(z)||}{\log r} =\rho ;
\end{equation*}
\item[(2)] The Jacobi coefficients $(A_n,B_n)_{n=1}^\infty$ satisfy
$$
\limsup_{n\to\infty} \frac{n\log n}{\log (||A_n A_n^*-\bdone|| + ||B_n||)^{-1}} = \frac{\rho}{2}.
$$
\end{itemize}
\end{theorem}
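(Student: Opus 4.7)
My plan is to pass from $\mathfrak{a}$ to the matrix-valued Jost function $u(z)$ in the uniformizer variable (where $x = z + z^{-1}$), and then combine the standard order/Taylor-coefficient correspondence for entire functions with a matrix Baxter-type identity.

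First I would establish \eqref{acPart}. The matrix analogue of Lemmas~\ref{merom}--\ref{merom_fin}, worked out in \cite{K_jost}, shows that under \eqref{blockSuperExp} the matrix $m$-function of $\calJ$ extends meromorphically across $(-2,2)$ to the whole Riemann surface $\calS_\fre$ with $\fre=[-2,2]$. Applying the matrix Herglotz formula (Lemma~\ref{herglotz}) together with an entrywise use of Lemma~\ref{greenstein} then gives the a.c.\ density in the form $(2\pi)^{-1}\mathfrak{a}(x)^{-1}\sqrt{4-x^2}\,\bdone_{[-2,2]}$ with $\mathfrak{a}$ meromorphic on $\bbC$; the same local $\sqrt{r}$-trick used in the proof of Theorem~\ref{sp} removes potential branching at $\pm 2$ and shows $\mathfrak{a}$ is in fact entire.

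Second, I would invoke the block Jost function $u(z)$ from \cite{K_jost}: under \eqref{blockSuperExp} it extends from $\{|z|<1\}$ to an entire matrix-valued function of $z \in \bbC$, and lifting the density via $x = z+z^{-1}$ yields the factorization
\[
\mathfrak{a}(z+z^{-1}) \;=\; u(z)\,u^*(1/\bar z).
\]
Because $u^*(1/\bar z) \to u^*(0)$ as $|z|\to\infty$ while $|z+z^{-1}| = (1+o(1))|z|$, the order of $\mathfrak{a}$ as an entire function of $x$ equals the order of $u$ as an entire function of $z$. The standard Hadamard characterization of finite-order entire functions then shows that $(1)$ is equivalent to
\[
\limsup_{n\to\infty}\,\frac{n\log n}{-\log\|u_n\|} \;=\; \rho,
\]
where $u(z)=\sum_{n\ge 0} u_n z^n$.

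The final, and I expect hardest, step is a matrix-valued Baxter-type identity linking the Jost Taylor coefficients to the Jacobi parameters:
\[
\limsup_n\,\frac{n\log n}{-\log \|u_n\|}\;=\;2\,\limsup_n\,\frac{n\log n}{-\log(\|A_nA_n^*-\bdone\|+\|B_n\|)}.
\]
The factor $2$ is the same one appearing in the scalar Geronimus correspondence between OPUC Verblunsky coefficients and Jacobi parameters: the natural Jost recursion advances the uniformizer coefficient index by $2$ per block Jacobi step, so the Jost coefficient $u_n$ reflects the Jacobi parameter at index $\sim n/2$ in the super-exponential regime. My approach here would be to write $u_n$ explicitly through the block transfer matrix at the $n$-th block step, invert via a Gronwall-type bootstrap, and track the leading term carefully to obtain matching two-sided rate bounds. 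Combining the three steps then yields $(1)\Leftrightarrow (2)$ with the prefactor $\rho/2$.
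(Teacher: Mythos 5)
Your first two steps (establishing \eqref{acPart} via the matrix $m$-function machinery from \cite{K_jost}, and passing to the Jost function via the factorization $\mathfrak{a}(z+z^{-1})=2\pi\,u(\bar z^{-1})^*u(z)$, so that $\mathfrak{a}$ and $u$ have the same growth because $u(0)=\bdone$) track the paper's proof closely. One small slip: you write the factorization as $u(z)\,u^*(1/\bar z)$, with the factors in the opposite order from~\eqref{funcA}; this does not affect the growth argument but is worth noting since the factors are matrices and do not commute.

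The real issue is your third step. You correctly isolate the heart of the theorem --- a Baxter-type identity relating the decay rate of the Jost coefficients to that of $\|A_nA_n^*-\bdone\|+\|B_n\|$, with the factor $2$ coming from the degree-doubling in the Jost recursion --- but you do not actually prove it, and the route you gesture at (``transfer matrix at the $n$-th block step, Gronwall-type bootstrap'') is not obviously workable, especially for the hard direction $(1)\Rightarrow(2)$. The paper does not go through the Taylor coefficients $u_n$ at all. For $(2)\Rightarrow(1)$ it uses the Geronimo--Case functions $g_n$: the a priori polynomial bound $\|g_n(z)\|\le K_1|z|^{2n}$ combined with the difference estimate $\|g_{n+1}-g_n\|\lesssim(\|A_nA_n^*-\bdone\|+\|B_n\|)\,|z|^{2n+2}$ telescopes to a bound on $\|u(z)\|$ by an explicit entire function of the correct order. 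For the converse $(1)\Rightarrow(2)$ the key tool is the coefficient-stripping inequality $|||u^{(n+1)}|||_{R_1}\le K_4 R_1^{-2n}(1+\varepsilon)^{2n}|||u|||_{R_1}$ for the Jost functions of the tails $\calJ^{(n)}$, applied with the $n$-dependent radius $R_1=n^{1/(\rho+\varepsilon)}$; this choice is what converts the growth-order hypothesis on $u$ into the $e^{-\eta n\log n}$ decay of the coefficients. None of this is recovered by a generic Gronwall/transfer-matrix bootstrap, and the converse direction in particular requires the tail-stripping estimate rather than any statement about the Taylor coefficients of $u$. So your proposal identifies the right landmarks but leaves the central Geronimo--Baxter step unproved.
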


\begin{remark}
 In fact the condition~\eqref{blockSuperExp} implies not only~\eqref{acPart}, but also guarantees that the spectral measure contains no singular-continuous part, and only finitely many point masses each of which is canonical. More details are in~\cite[Thms~3.7,~3.8]{K_jost}. One can also deduce a result about  matrix-valued spectral measures of block Jacobi matrices satisfying $\limsup_{n\to\infty} (||A_n A_n^*-\bdone|| + ||B_n||)^{1/2n} \le R^{-1}$ for $1<R<\infty$.
\end{remark}
\begin{proof}
This theorem is the refinement of the argument from the author's~\cite{K_jost}, which in turn uses the ideas of  Damanik--Simon~\cite{DS2}. First of all, under the condition~\eqref{blockSuperExp} we can repeat the arguments in the beginning of the proof of Theorems~\ref{sp} but using the  matrix-valued $m$-function criterion~\cite[Thm~3.8]{K_jost} instead of the scalar Lemma~\ref{merom}. This implies that the a.c. density of $\mu_\Delta$ is indeed of the form~\eqref{acPart} with $\mathfrak{a}$ being an entire function. Also note that by \cite[Theorem~4.6(vi)]{K_jost},
\begin{equation}\label{funcA}
\mathfrak{a}(z+z^{-1}) = 2\pi u(\bar{z}^{-1})^* u(z),
\end{equation}
where $u(z)$ is the Jost function of $\calJ$ (for the definition and properties see~\cite{K_jost}). Since $u(0)=\bdone$, the behavior of $||\mathfrak{a}||$ and $||u||$ for large $|z|$ are identical. Let us now show the ``moreover'' part of the theorem.

Suppose first that for some constant $K_0>0$,
\begin{equation}\label{superexp}
||A_n A_n^*-\bdone || + ||B_n|| \le K_0 e^{-\frac{2}{\rho+\varepsilon} n\log n}, 
\end{equation}
where we fixed some $\varepsilon>0$.

Following the proof of \cite[Lemma~4.3]{K_jost} and using \eqref{superexp}, we can see that there exists some constant $K_1>0$ such that
\begin{equation}\label{refer1}
||g_n(z)||\le K_1 |z|^{2n}, \qquad ||c_n(z)||\le K_1 |z|^{2n}
\end{equation}
for all $|z|>1$ (see \cite[Eq.~(4.15)]{K_jost}). Here $g_n(z)$ and $c_n(z)$ are the functions defined by the Geronimo--Case recursions \cite[eq. (4.6)--(4.7)]{K_jost}.
Now we use \cite[eq. (4.16)]{K_jost} to see that
\begin{equation}\label{refer2}
||g_{n+1}(z)-g_{n}(z) || \le K_2 (||A_n A_n^*-\bdone || + ||B_n||)|z|^{2n+2}
\end{equation}
for another constant $K_2>0$ (note that \cite[eq. (4.16)]{K_jost} has a typo: instead of $\left[ \max(1,r) \right]^{2n}$ there should be $\left[ \max(1,r) \right]^{2}$).
Thus
\begin{multline}\label{normU}
||u(z)||=||\lim_{n\to\infty} g_n(z) || \le 1+\sum_{n=0}^\infty ||g_{n+1}(z)-g_n(z)|| \\
\le  K_3\, |z|^2 \sum_{n=0}^\infty |z|^{2n} e^{-\frac{2}{\rho+\varepsilon} n\log n}.
\end{multline}
Now note that the order of the exponential growth of the entire function $t(z):= z \sum_{n=0}^\infty z^{n} e^{-\frac{2}{\rho+\varepsilon} n\log n}$ is
$$
\limsup_{n\to\infty} \frac{n\log n}{\log e^{\frac{2}{\rho+\varepsilon} n\log n} }   = \frac{\rho+\varepsilon}{2}.
$$
Thus
$$
||u(z)|| \le t(|z|^2) \le e^{|z|^{\rho+2\varepsilon}} \qquad \mbox{for large } |z|.
$$

Conversely, suppose that
\begin{equation}\label{converse1}
||u(z)|| \le e^{|z|^{\rho+\varepsilon}} \qquad \mbox{for large } |z|.
\end{equation}
Let us use the notation from \cite{K_jost}: define $u^{(n)}$ to be the Jost function of $\calJ^{(n)}$, and take an arbitrary $1<R_1<\infty$.  For an entire matrix-valued function $f$ and any $R$ we define
$$
|||f|||_{R}:=\left( \int_{-\pi}^\pi ||f(R e^{i\theta})-f(0)||^2 \frac{d\theta}{2\pi} \right)^{1/2}.
$$
Now we invoke \cite[eq. (5.11)]{K_jost} (together with the inline formula that follows it in~\cite{K_jost}):
$$
|||u^{(n+1)}|||_{R_1} \le K_4 \, R_1^{-2n} |||u|||_{R_1} (1+\varepsilon)^{2n}.
$$
Let us choose $R_1$ to be $n$-dependent: $R_1=n^{1/(\rho+\varepsilon)}$.  Then~\eqref{converse1} becomes
$$|||u|||_{n^{1/(\rho+\varepsilon)}} \le K_5 e^{R_1^{\rho+\varepsilon}} = K_5 e^n,$$
 and so we get
$$
|||u^{(n+1)} |||_{n^{1/(\rho+\varepsilon)}} \le K_6 \exp\left( -\frac{2n\log n}{\rho+\varepsilon} + n + 2n \log(1+\varepsilon)  \right),
$$
Thus for large enough $n$ we obtain that
$$
|||u^{(n+1)} |||_{n^{1/(\rho+\varepsilon)}} \le \exp\left( -\frac{2n\log n}{\rho+2\varepsilon}\right).
$$
Now following the rest of the arguments after \cite[eq. (5.12)]{K_jost}, we get
\begin{equation*}
||A_n A_n^*-\bdone || + ||B_n|| \le \exp\left( -\frac{2n\log n}{\rho+2\varepsilon}\right) \quad \mbox{ for large } n.
\end{equation*}
This finishes the proof.
\end{proof}

\section{Geronimo--Baxter theorems for perturbations of periodic Jacobi operators}\label{sGB}

We want to prove the result~\eqref{GeronimoBaxter}, which relates the asymptotic behaviour of the Jacobi coefficients and the Fourier coefficients $c_n$ of $a(z)$ in~\eqref{meas}. This is done in Theorem~\ref{baxterSuper} below. This Jacobi--vs--Fourier analogy can be mimicked for finite range perturbations too: loosely speaking, Theorem~\ref{degree} says that having $n$ aperiodic Jacobi coefficients guarantees precisely $n$ extra singularities. The analogous result for finite Fourier/Taylor series (aka ``polynomials'', of course) is that having $n$ non-zero Fourier/Taylor coefficients guarantees precisely $n$ roots (fundamental theorem of algebra).




Apart from an interest of their own, our motivation for these Geronimo--Baxter type results stems from the inverse resonance problem in Section~\ref{sResonance}. Indeed, as we discuss later, the requirement of $a(z)$ to be of the  exponential growth of order less than $1$ is the right condition for the uniqueness of the inverse resonance problem to hold.

\subsection{Finite range perturbations}\label{ssGBFinite}


Let us introduce the notation
\begin{equation*}
\calJ^{(s)} = (a_{n+s},b_{n+s})_{n=1}^\infty,
\end{equation*}
that is, $\calJ^{(s)}$ is the Jacobi matrix obtained from $\calJ$ by removing the first $s$ rows and columns. Note that $\calJ^{(0)}$ is just $\calJ$.

\begin{definition} 
Let $\calT_\fre$ be the isospectral torus of Jacobi matrices associated with the finite gap set $\fre$.
\begin{itemize}
\item Denote by $\calT_\fre^{[2s-1]}$ the set of all matrices for which $\calJ^{(s)} \in \calT_\fre$, $\calJ^{(s-1)}\notin\calT_\fre$, and $a_s = a_{s+p}$;
\item Denote by $\calT_\fre^{[2s]}$ the set of all matrices for which $\calJ^{(s)} \in \calT_\fre$, but $\calJ^{(s-1)}\notin\calT_\fre$, and $a_s \ne a_{s+p}$.
\end{itemize}
\end{definition}
\begin{remarks}
1. One may want to think of the index $k$ in $\calT_\fre^{[k]}$ as the position of the last coefficient in the sequence $b_1,a_1,b_2,a_2,\ldots$ that fails to be periodic (it is important to put the $b$'s coefficients before the $a$'s here).

2. Thus the set of all eventually periodic matrices splits into the disjoint union $\calT_\fre^{[0]}\cup \calT_\fre^{[1]} \cup \ldots \cup \calT_\fre^{[k]} \cup\ldots$

3. $\calT_\fre^{[0]}$ coincides with $\calT_\fre$.
\end{remarks}

\begin{theorem}\label{degree}
Under one of the equivalent conditions of Theorem~\ref{sp_fin},
$$\calJ \in \calT_\fre^{[k]}$$
 if and only if
$$ \deg a(z) = k+p-1.$$
\end{theorem}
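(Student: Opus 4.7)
The plan is to prove the direct implication $\calJ \in \calT_\fre^{[k]} \Rightarrow \deg a(z) = k + p - 1$ by induction on $k$. Since the classes $\{\calT_\fre^{[k]}\}_{k \ge 0}$ partition the eventually periodic Jacobi matrices and $\deg a$ is a single integer determined by $\calJ$, the converse direction will then follow automatically.

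For the base case $k = 0$, I would rely on the Floquet description of periodic $m$-functions. For $\calJ \in \calT_\fre$, the two branches $m(z_\pm)$ are the roots of a quadratic equation over $\bbC(x)$ coming from the monodromy matrix, and $m$ has exactly $p-1$ poles on $\calS_\fre$ (the classical Dirichlet data, one point $\gamma_j$ in each of the $p-1$ open gaps, lying on either sheet depending on the position in the isospectral torus). From the identity
\[
m(z) - m^\sharp(z) = \frac{2\pi\,\sqrt{\tilde r(z)}}{a(\pi(z))}
\]
derived inside the proof of Theorem~\ref{sp_fin}, these $p-1$ poles of $m$ are matched bijectively with the $p-1$ zeros of $a$ in $\bbC$, yielding $\deg a = p-1$.

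For the induction step, I would use coefficient stripping. Starting from $m^{(n-1)}(z) = (b_n - \pi(z) - a_n^2 m^{(n)}(z))^{-1}$, subtracting the $\sharp$-reflected version and invoking the identity above for both $m^{(n)}$ and $m^{(n-1)}$ yields, after a short manipulation,
\[
\frac{a^{(n)}(\pi(z))}{a^{(n-1)}(\pi(z))} = a_n^2\,m^{(n-1)}(z)\,m^{(n-1)\sharp}(z).
\]
The left-hand side is a function of $x = \pi(z)$ alone, so $m^{(n-1)}m^{(n-1)\sharp}$ is rational in $x$, and $\deg a^{(n-1)} - \deg a^{(n)}$ equals minus the order at $x = \infty$ of this product. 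Because $m^{(n-1)} \sim -1/x$ at $\infty_+$, the jump is controlled entirely by the order of $m^{(n-1)}$ at $\infty_-$. A direct analysis of the recursion, combined with the shift rule $\calJ \in \calT_\fre^{[k]} \Rightarrow \calJ^{(1)} \in \calT_\fre^{[\max(k-2,0)]}$, reveals three regimes: if $\calJ^{(n-1)} \in \calT_\fre$, then $m^{(n-1)}$ has a pole at $\infty_-$ (jump $0$); if $\calJ^{(n-1)} \in \calT_\fre^{[1]}$, the $-x$ term in $b_n - x - a_n^2 m^{(n)}(z)$ cancels against $a_n^2$ times the leading term of $m^{(n)}$ at $\infty_-$ thanks to $a_n = a_{n+p}$, so $m^{(n-1)}(\infty_-)$ is a finite nonzero constant (jump $1$); in all remaining non-periodic cases this cancellation fails or is already broken by the inductive behavior of $m^{(n)}$, so $m^{(n-1)}$ acquires a simple zero at $\infty_-$ (jump $2$). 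Summing the jumps along the induction reproduces $\deg a = k + p - 1$ exactly.

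The main obstacle is the asymptotic analysis at the point $\infty_-$. In the base case, one has to identify the pole order of $m$ at $\infty_-$ for a general periodic Jacobi matrix (not just the free one) and extract the leading coefficient in terms of the periodic data. In the induction step, the delicate part is verifying the precise cancellation in the denominator $b_n - x - a_n^2 m^{(n)}(z)$ that separates the three regimes; this hinges on whether the individual scalar coefficients $b_n, a_n$ match their periodic extrapolations $b_{n+p}, a_{n+p}$, and requires careful bookkeeping of leading terms through the coefficient-stripping recursion.
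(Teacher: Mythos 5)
Your argument is correct and follows essentially the same route as the paper: both proofs rest on the coefficient-stripping formula and on reading off, from the Laurent coefficients $k_1,k_0$ of $m^{(n)}$ at $\infty_-$, whether $a_n$ and $b_n$ agree with their periodic continuations, which is exactly what separates your three regimes (jumps $0$, $1$, $2$). The only difference is bookkeeping: the paper counts the degree of $m$ as a map $\calS_\fre\to\bbC\cup\{\infty\}$ and tracks which of its poles are finite, whereas you track $\deg a^{(n)}$ directly through the ratio identity $a^{(n)}/a^{(n-1)}=a_n^2\, m^{(n-1)}m^{(n-1)\sharp}$ --- an equivalent computation.
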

\begin{proof}
Let us start by observing that by Theorem~\ref{sp_fin}, $m$ is a map $\calS_\fre \to \bbC\cup\{\infty\}$. As such, the total number of its zeros (counted with multiplicities) equals to the total number of its poles (counted with multiplicities), which we will denote by $\deg m$. See~\cite[Cor~5.12.4]{Rice} for a proof.

The case $k=0$ of our theorem is well-known (originally due to Flaschka--McLaughlin--Krichever--van Moerbeke; see, e.g.,~\cite[Sect 5.13]{Rice} and the references therein). For a future reference we note that for any $\calJ\in \calT_\fre^{[0]}$, $m(z)$ has one singularity per gap, a first order zero at $\infty_+$, and a first order pole at $\infty_-$.

Now suppose $k=1$ or $k=2$, i.e., $\calJ^{(1)}\in \calT_\fre$.
The $m$-functions $m(z)$, $m^{(1)}(z)$ of $\calJ$ and $\calJ^{(1)}$ are known to obey
\begin{equation}\label{stripping}
m(z)=\frac{1}{b_1-z-a_1^2 m^{(1)}(z)}
\end{equation}
(this is essentially just the Schur complement formula). Since $\calJ^{(1)}\in\calT_\fre^{[0]}$, we know that
$m^{(1)}$ has one singularity per gap and a pole at $\infty_-$. Recall that $m^{(1)}(z)\sim -\tfrac{1}{z}$ at $\infty_+$, and  let $m^{(1)}(z)\sim k_{1}z+k_0+O(\tfrac1z)$, $k_1\ne0$, at $\infty_-$. Thus by~\eqref{stripping}, $m(z)=0$ exactly once per each gap, at $\infty_+$, and possibly at $\infty_-$. Note that $m(z)=0$ at $\infty_-$ if and only if $-1-a_1^2 k_1 \ne 0$. But we know that if $\calJ$ is periodic then $m(z)$ has a pole at $\infty_-$. Therefore $a_1=\sqrt{-1/k_1}$ is precisely the condition for $a_1$ to be periodic. In other words, $a_{p+1}=\sqrt{-1/k_1}$.

If $k=2$, then $a_1\ne a_{p+1}$, so $a_1 \ne \sqrt{-1/k_1}$, and therefore we just  showed that $m$ has exactly $p+1$ zeros: once per each gap, one at $\infty_+$, and one at $\infty_-$. Thus $\deg m = p+1$. Therefore there are precisely $p+1$ poles of $m$ \textit{all of which are finite}. In other words, $\deg a = p+1$.

If $k=1$, then $a_1=a_{p+1} =\sqrt{-1/k_1}$, but $b_1$ is aperiodic ($b_1\ne b_{p+1}$). Then $\infty_-$ is not a zero of $m$, so we have precisely $p$ zeros (one per each gap and $\infty_+$), i.e., $\deg m = p$. So $m$ has $p$ of poles. Are all of them finite? By \eqref{stripping},  $\infty_-$ is a pole of $m$ if and only if $b_1-a_1^2 k_0 =0$. Periodic Jacobi matrices have a pole at $\infty_-$, which means that $b_1+ k_0/k_1 =0$ is exactly the condition for $b_1$ to be periodic. Thus $b_{p+1}=-k_0/k_1$, and since $b_1\ne b_{p+1}$, we obtain that $\infty_-$ is not a pole of $m$, i.e., all of the $p$ poles of $m$ are finite. In other words, $\deg a = p$.

Now $k\ge 3$ follows easily by induction. First note that $\infty_-$ was not a pole in either of the cases $k=1$ or $k=2$ above. Therefore by~\eqref{stripping}, $\infty_-$ is always a zero when $k\ge3$. Using~\eqref{stripping} again we obtain that $m$ has zeros at $\infty_+$, $\infty_-$, and at every pole of $m^{(1)}$. Therefore $\deg m = \deg m^{(1)}+2$.
\end{proof}

%
%

\subsection{Super-exponential perturbations}\label{ssGBSuper}

\begin{theorem}\label{baxterSuper}
Suppose that one of the equivalent conditions of Theorem~\ref{sp} holds with $R=\infty$. The following are equivalent:
\begin{itemize}
\item[(1)] $a(z)$ is an entire function of growth order $\rho$;
\item[(2)] Taylor's coefficients $c_n$ of $a(z)$ satisfy
$$
\limsup_{n\to\infty} \frac{n\log n}{\log |c_n|^{-1}} = \rho
$$
\item[(3)] The Jacobi coefficients $(a_n,b_n)_{n=1}^\infty$ satisfy
$$
\limsup_{n\to\infty} \frac{n\log n}{\log (|a_n-a_n^{\circ}| + |{b}_n-b_n^{\circ}|)^{-1}} = \frac{\rho}{2p},
$$
where $(a_n^{\circ},b_n^{\circ})_{n=1}^\infty$ is a periodic Jacobi matrix from $\calT_\fre$.
\end{itemize}
\end{theorem}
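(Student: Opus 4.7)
My plan is to split into two parts. The equivalence $(1) \iff (2)$ is the classical characterization of the growth order of an entire function $f(z) = \sum_n c_n z^n$ in terms of its Taylor coefficients, namely $\rho = \limsup_n \frac{n \log n}{\log |c_n|^{-1}}$; I would simply invoke it, applied to $a(z)$, which is entire by Theorem~\ref{sp} when $R=\infty$.

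The substantive direction is $(1) \iff (3)$. My approach is to reduce to the already-proven matrix-valued Theorem~\ref{baxterMatrix} via the Damanik--Killip--Simon Magic Formula. Given a scalar $\calJ$ satisfying the super-exponential decay hypothesis with $R = \infty$, I would pass to the block-Jacobi operator $\til\calJ := \Delta(\calJ)$ acting on $\ell^2(\bbZ_+,\bbC^p)$, obtained by grouping scalar indices into blocks of size $p$. When $\calJ \in \calT_\fre$, the Magic Formula gives $\til\calJ = S^p + (S^*)^p$, the free block Jacobi matrix with coefficients $(\bdone,\bdnot)$ and spectrum $[-2,2]$; super-exponential decay at the scalar level transfers to super-exponential decay at the block level, so Theorem~\ref{baxterMatrix} applies to $\til\calJ$ and delivers the equivalence between the growth order $\rho_{\mathfrak{a}}$ of the associated entire function $\mathfrak{a}(w)$ from~\eqref{acPart} and the block decay rate.

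To convert this back to the claim of Theorem~\ref{baxterSuper}, two comparisons are required. The first is between the scalar $a(z)$ of Theorem~\ref{sp} and the matrix-valued $\mathfrak{a}(w)$: the scalar a.c.\ density $\sqrt{|r(x)|}/|a(x)|$ on $\fre$ pushes forward under the $p$-to-$1$ map $\Delta:\fre\to[-2,2]$ to give the matrix-valued density $\mathfrak{a}(w)^{-1}\sqrt{4-w^2}$. Using the identity $\Delta(z)^2-4 = C\,r(z)$, which holds precisely because all gaps of $\fre$ are open, one expresses $\mathfrak{a}(\Delta(z))^{-1}$ explicitly as a sum over the $p$ preimages $z_1,\ldots,z_p$ of $\Delta(z)$ with factors $a(z_j)^{-1}$ and $\Delta'(z_j)^{-1}$; together with the asymptotic $|\Delta(z)|\sim|z|^p$ for large $|z|$, this yields the quantitative relation between $\rho_a$ and $\rho_{\mathfrak{a}}$. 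The second comparison is between scalar and block decay rates: since $\til\calJ=\Delta(\calJ)$ is a degree-$p$ polynomial in $\calJ$, the block coefficients $(\til A_N,\til B_N)$ are polynomial expressions in the scalar Jacobi coefficients $(a_k,b_k)$ at scalar indices in an $O(p)$-window around $Np$, yielding the correspondence between the $n\log n$-type rates at the two scales.

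Combining both translations with Theorem~\ref{baxterMatrix} is designed to yield the desired $\limsup_n \frac{n\log n}{\log(|a_n-a_n^\circ|+|b_n-b_n^\circ|)^{-1}} = \rho_a/(2p)$. The main obstacle I anticipate is the careful bookkeeping of scaling factors in these two translations: both the degree-$p$ covering $\Delta$ (entering in the growth-order comparison) and the block-vs-scalar indexing (entering in the rate comparison) contribute $p$-powers, and tracking them precisely is needed to produce the factor $\frac{1}{2p}$ in the final statement rather than $\frac{1}{2}$ or $\frac{1}{2p^2}$.
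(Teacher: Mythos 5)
Your proposal is correct and follows essentially the same route as the paper: reduce to Theorem~\ref{baxterMatrix} by passing to the block operator $\Delta(\calJ)$ via the Magic Formula, relate the scalar $a(z)$ to the matrix-valued $\mathfrak{a}(\lambda)$ through the degree-$p$ map $\Delta$ together with $\Delta(z)\sim z^p$ (which produces the factor $p$ in the growth orders), and relate the scalar and block decay rates through the polynomial (Lipschitz) dependence of $(A_n,B_n)$ on the scalar coefficients. The paper implements the $a$-versus-$\mathfrak{a}$ comparison via an identity for $m-m^\sharp$ and $\mathfrak{m}_\Delta-\mathfrak{m}^\sharp_\Delta$ from \cite{K_merom} rather than your sum-over-preimages formula, but this is a difference of bookkeeping, not of approach.
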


\begin{remarks}
1. Compare this with Geronimo's~\cite[Thm 13]{Geronimo}. His result is for~\eqref{expFree} only and is restricted to the case when $a(z)$  has at most finitely many zeros. We stress however that his techniques of Beurling weighted Banach algebras allow a finer control over the asymptotics of the coefficients.

2. That (1) and (2) are equivalent is a very standard fact. We include (2) just for the aesthetical purpose, so that (3) looks more pleasing.
\end{remarks}
\begin{proof}
We briefly recall the notation from the author's~\cite{K_merom}. If $\Delta$ is the discriminant~\eqref{discriminant} associated with $\fre$, then $\Delta(\calJ)$ can be viewed as a block (see Section~\ref{sMatrix}) Jacobi operator. Let $\{A_n,B_n\}_{n=1}^\infty$ be its ($p\times p$ matrix-valued) Jacobi coefficients, $\mathfrak{m}(z)$ be its matrix-valued $m$-function, and $\mathfrak{a}(z)$ be the function from~\eqref{acPart}. Let $\mathfrak{p}_j$ ($j\ge 0$) be the $j$-th right matrix-valued orthonormal polynomial associated to $\Delta(\calJ)$. Let $p_j$ ($j\ge 0$) be the $j$-th (scalar) orthonormal polynomial associated to $\calJ$. Finally, define $f_1, \ldots, f_p$ to be the $p$ inverse functions of $\Delta(z)$ (i.e., $\Delta(z)=\lambda \Rightarrow z=f_j(\lambda)$) and  $\tilde{f}_1,\ldots,\tilde{f}_p$ to be their lifts as maps $\calS_{[-2,2]} \to \calS_\fre$ (see~\cite[Sect 4.1]{K_merom}).

A simple modification of the proof of~\cite[Lemma~4.3]{K_merom} (same arguments but without doing the summation in~\cite[Lemma~4.2]{K_merom}) produces
\begin{multline}\label{comput1}
[m(\tilde{f}_l(\lambda))-m^\sharp(\tilde{f}_l(\lambda))]
\left( \begin{array}{cccc} 1&p_1&\cdots & p_{p-1} \\ p_1&p^2_1&\cdots & p_1p_{p-1} \\ \vdots&\vdots&\ddots&\vdots \\ p_{p-1}&p_1p_{p-1} &\cdots & p^2_{p-1} \end{array}
\right) (f_l(\lambda))
\\
=  \left[\mathfrak{m}_\Delta(\lambda)-\mathfrak{m}^\sharp_\Delta(\lambda)\right] (K_{11,l}(\lambda)+\mathfrak{p}_1(\lambda) K_{21,l}(\lambda)),
\end{multline}
where  $K_{ij,l}(\lambda)$ is the $(i,j)$-th $p\times p$ block entry of $c_0 \prod_{j\ne l} (\calJ-f_j(\lambda))$. Now note that by Theorems~\ref{sp},~\ref{baxterMatrix}, and identity~\eqref{tildeF},
\begin{align}
\label{comput2} m(z)-m^\sharp(z) &= 2 \pi \frac{\sqrt{r(z)}}{a(z)},
\\
\label{comput3} \mathfrak{m}_\Delta(\lambda)-\mathfrak{m}^\sharp_\Delta(\lambda) &= 2 \pi \mathfrak{a}(\lambda)^{-1} \sqrt{\lambda^2-4}.
\end{align}

Combining~\eqref{comput1},~\eqref{comput2},~\eqref{comput3}, together with $\Delta(z)\sim z^p, z\to\infty$, one obtains
\begin{equation*}
\begin{aligned}
\limsup_{r\to\infty} \frac{\log\log \sup_{|z|=r} |a(z)|}{\log r}  &= \limsup_{r\to\infty} \frac{\log\log \sup_{|z|=r} ||\mathfrak{a}(\Delta(z))||}{\log r} \\
& = p \limsup_{r\to\infty} \frac{\log\log \sup_{|z|=r} ||\mathfrak{a}(z)||}{\log r}.
\end{aligned}
\end{equation*}

By the same arguments as \cite[Lemma~B.3]{K_merom} (which in turn were adopted from \cite{DKS}), one can show that
\begin{equation*}\label{Lip}
\limsup_{n\to\infty} \frac{n\log n}{\log (|a_n-a_n^{\circ}| + |{b}_n-b_n^{\circ}|)^{-1}} = \limsup_{n\to\infty} \frac{n\log n}{\log (||A_n A_n^*-\bdone || + ||B_n|| )^{-1}}
\end{equation*}
(in the course of showing this, one needs to use the fact that for any $\eta$ there exists $M$ such that $1\le e^{\eta n \log n} \sum_{k=n}^\infty e^{-\eta k \log k} \le M$).

Now combining the last two equations together with Theorem~\ref{baxterMatrix}, we obtain that (3) holds if and only if
$$
\limsup_{r\to\infty} \frac{\log\log \sup_{|z|=r} |a(z)|}{\log r}=\rho,
$$
which is the definition of (1). That this is equivalent to (2) is a well-known fact.
\end{proof}

\section{Inverse resonance problem: existence and uniqueness}\label{sResonance}

With the above results at hand, it is now easy to solve the inverse resonance problem. The idea is simple: resonances and eigenvalues recover the function $a(z)$, which determines the spectral measure.

\subsection{Inverse resonance problem for finite range perturbations: existence and uniqueness}\label{ssResonanceFinite}
\begin{theorem}\label{resonanceEv}
Let $\{R_j\}_{j=1}^{K}$ and $\{E_j\}_{j=1}^N$ $(0\le N,K< \infty)$ be two sequences of complex numbers $($possibly with multiplicities$)$. These two sequences are respectively resonances and eigenvalues of an eventually periodic Jacobi matrix if and only if 
\begin{itemize}
\item[$(O_1)$] $\{E_j\}_{j=1}^N$ oddly interlace with $\{R_j\}_{j=1}^{K}$ on $\bbC$ $($see Def.~\ref{OI_light}$)$;\footnote{We remind that $(O_1)$ includes $\{R_j\}_{j=1}^K\cap\{E_j\}_{j=1}^N = \varnothing$ as part of the Definition~\ref{OI_light}.}
\item[$(O_2)$] Each gap $[\beta_k,\alpha_{k+1}]$ contains an odd number of points from $\{E_j\}_{j=1}^N \cup \{R_j\}_{j=1}^{K}$ $($counting with multiplicities$)$;
\item[$(O_3)$] $E_j\in\bbR\setminus\fre$ for every $j$; each $E_j$ is of multiplicity $1$;
\item[$(O_4)$] $R_j\in\bbC\setminus\Int(\fre)$ and they are real or come in complex conjugate pairs $($counting multiplicities$)$; if $R_{j}\in\cup_{j=1}^p\{\alpha_j,\beta_j\}$, then the multiplicity of $R_{j}$ is $1$.
\end{itemize}
 Such a Jacobi matrix $\calJ$ is unique.

In fact, $\calJ$ is in $\calT_\fre^{[K+N-p+1]}$.
\end{theorem}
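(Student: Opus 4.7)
The plan is to use Theorem~\ref{sp_fin} as a bridge. Under $(P_{f.r.})$, the identity $m(z)-m^\sharp(z)=2\pi\sqrt{r(z)}/a(z)$ established in the proof of Theorem~\ref{sp} shows that the singularities of $\calJ$ are exactly the zeros of the real polynomial $a(z)$ on the Riemann surface $\calS_\fre$: eigenvalues are real zeros lying in $\calS_+$, anti-bound states are real zeros in $\calS_-$, and complex resonances are the non-real zeros of $a(z)$ (which come in conjugate pairs because $a$ has real coefficients). Thus the theorem reduces to characterizing exactly which multisets of complex numbers can arise as zero sets of a polynomial satisfying $(S'_a)$--$(S'_c)$, together with a choice of which real non-band zeros are assigned the role of eigenvalues.

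For $(\Rightarrow)$, apply Theorem~\ref{sp_fin} to extract $a(z)$. Then $(O_3)$ is the simplicity assertion in $(S'_b)$; $(O_4)$ follows from the reality of $a$ (pairing up complex roots) combined with Lemma~\ref{merom_fin}$(M_b')$ at band edges; $(O_1)$ is the oddly interlacing property in $(S'_b)$, extended trivially to $\bbC$ since the definition only constrains real points; and $(O_2)$ follows from the sign condition in $(S'_a)$, since a real polynomial must have odd total zero multiplicity in each gap in order to flip sign from $(-1)^{p-k}$ on one band to $(-1)^{p-k-1}$ on the next.

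For $(\Leftarrow)$, set
\begin{equation*}
a(z) \,:=\, c \prod_{j=1}^{N} (z-E_j) \prod_{j=1}^{K} (z-R_j),
\end{equation*}
which is real by $(O_3)$ and $(O_4)$. The conditions $(O_3)$--$(O_4)$ force all zeros of $a$ to lie outside $\Int(\fre)$, with band-edge zeros simple, so $(S'_a)$ integrability is automatic. Choose $\sgn(c)$ so that $\sgn a = \sg_\fre$ on $\Int(\fre)$; this single global choice suffices because $(O_2)$ forces $a$ to alternate sign across consecutive bands in lockstep with $\sg_\fre$. Define $\mu$ by~\eqref{meas2} with weights~\eqref{weights2} (which are positive by the absolute-value formula) and fix $|c|$ by the normalization $\mu(\bbR)=1$. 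The hypothesis $(S'_b)$ is then the restriction of $(O_1)$ to real $R_j$'s, and $(S'_c)$ is the definition of $w_j$. Theorem~\ref{sp_fin} now delivers an eventually periodic $\calJ$ whose eigenvalues and resonances are exactly the prescribed $\{E_j\}$ and $\{R_j\}$.

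Uniqueness of $\calJ$ follows because the spectral measure determines $\calJ$ (cyclicity of $\delta_1$), and the measure itself is determined by the data: $a(z)$ is the unique real polynomial with the prescribed zero set up to a real multiplicative constant, whose sign is pinned by $(S'_a)$ and whose magnitude is pinned by $\mu(\bbR)=1$. The refinement $\calJ\in\calT_\fre^{[N+K-p+1]}$ is then immediate from $\deg a = N+K$ and Theorem~\ref{degree}. The main subtlety I anticipate is the global sign bookkeeping in $(\Leftarrow)$---verifying that a single $\sgn(c)$ achieves $\sgn a = \sg_\fre$ on every band simultaneously---but this is exactly the content of $(O_2)$.
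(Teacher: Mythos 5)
Your proposal is correct and follows essentially the same route as the paper: necessity is read off from $(S'_a)$--$(S'_c)$ of Theorem~\ref{sp_fin}, sufficiency is obtained by forming the polynomial $a(z)=A\prod_j(z-R_j)\prod_j(z-E_j)$ with the sign of $A$ fixed by the band sign condition (which $(O_2)$ makes consistent across all bands) and $|A|$ fixed by normalization, uniqueness comes from the determinacy of this reconstruction, and the index $K+N-p+1$ from Theorem~\ref{degree}. The only cosmetic difference is that you derive the simplicity of band-edge resonances from $(M_b')$ rather than from integrability of the a.c.\ density, which amounts to the same fact.
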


\begin{remarks}
1. There is an implicit condition here that $K+N \ge p-1$, since by $(O_2)$ there must be at least one singularity per gap.


2. $\calT_\fre=\calT_\fre^{[0]}$ corresponds to the total number of singularities being $p-1$. In view of $(O_2)$, this means that there must be one singularity per gap, each of which could be an eigenvalue or a resonance. Since $\pi^{-1}([\beta_j,\alpha_{j+1}])$ is homeomorphic to a circle, we obtain that $\calT_\fre^{[0]}$ is homeomorphic to the direct product of $p-1$ circles, i.e., a $(p-1)$-torus. This justifies the term ``torus'' in Definition \ref{torus}.

3. We stress that $R_j$ and $E_j$ are finite numbers. In particular, we do not call $\infty_-$ a ``singularity'' here even if it happens to be a pole of $m$ (see Definition~\ref{singularity}). In fact, it is clear from the proof of Theorem~\ref{degree} that $\infty_-$ is indeed a pole of $m$ if $K+N = p-1$, but it is a regular point otherwise.
\end{remarks}
\begin{proof}
Let us first show the necessity. We already showed in Theorem~\ref{sp_fin} that $(O_1)$ and $(O_3)$ holds for any eventually periodic Jacobi matrix (see $(S'_b)$). $(O_2)$ follows from the sign-alternating property of $a(z)$, see $(S'_a)$. That $R_j$ are real or come in complex conjugate pairs follows from real-analyticity of $a(z)$. The rest of $(O_4)$ is a consequence of integrability of $\tfrac{d\mu}{dx}$ on $\fre$.

To show sufficiency, given $\{R_j\}_{j=1}^{K}$ and $\{E_j\}_{j=1}^N$, let
$$a(z)=A \prod_{j=1}^{K} (z-R_j) \prod_{j=1}^{N} (z-E_j),$$ where $A$ is a real constant to be determined momentarily. First choose the sign of $A$ so that $a(z)$ is positive on $(\alpha_p,\beta_p)$. Using $(O_2)$, we can see that $a(z)$ satisfies $(S_a)$ of Theorem~\ref{sp_fin}. Define $w_j>0$ by \eqref{weights2} for each $1\le j\le N$. 
Finally, the absolute value of $A$ can be chosen so that the total mass of $\mu$ is $1$.

Uniqueness follows from the fact that each step of the measure reconstruction was uniquely determined by the spectral characterization of Theorem~\ref{sp_fin}. The fact that $\calJ\in\calT_\fre^{[K+N-p+1]}$ follows from Theorem~\ref{degree}.
%
\end{proof}
\begin{corollary}
Let
$$
S = \fre \cup \{E_1,\ldots E_L\}, \quad L<\infty, \quad E_j\in\bbR\setminus \fre.
$$
Then there exists an eventually periodic Jacobi matrix $\calJ$ with
$$
\sigma(\calJ) = S.
$$
\end{corollary}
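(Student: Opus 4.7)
The plan is to reduce the existence question to Theorem~\ref{resonanceEv} by explicitly producing a legal resonance configuration. After discarding duplicates among the $E_j$'s (which leaves $S$ unchanged) we will treat $\{E_1,\ldots,E_L\}$ as the prescribed eigenvalue set $\{E_j\}_{j=1}^N$, and we will construct a finite set of real resonances $\{R_j\}_{j=1}^K\subset\bbR\setminus\fre$ so that $(O_1)$--$(O_4)$ all hold. Theorem~\ref{resonanceEv} will then manufacture an eventually periodic $\calJ$ whose eigenvalues are exactly $\{E_j\}$, and since $\sigma_{ess}(\calJ)=\fre$ while the spectral measure (by Theorem~\ref{sp_fin}) has no singular continuous part, we will conclude $\sigma(\calJ)=\fre\cup\{E_j\}=S$.

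For the construction, I would work gap by gap. Let $n_k$ denote the number of $E_j$'s in the open bounded gap $(\beta_k,\alpha_{k+1})$. If $n_k\ge 1$, place one resonance strictly between each pair of consecutive eigenvalues inside that gap (giving $n_k-1$ resonances there); if $n_k=0$, place a single resonance at the midpoint of the gap. For each of the two unbounded components $(-\infty,\alpha_1)$ and $(\beta_p,+\infty)$, do the same: put one resonance strictly between each pair of consecutive $E_j$'s lying in that component, and nothing if that component contains at most one $E_j$. All $R_j$'s are then real, distinct, lie in $\bbR\setminus\fre$, and avoid the band edges.

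The verification of $(O_1)$--$(O_4)$ is direct. Conditions $(O_3)$ and $(O_4)$ are immediate by construction. For $(O_2)$, in every bounded gap the total count of listed points is $2n_k-1$ when $n_k\ge 1$ and $1$ when $n_k=0$, hence odd. For $(O_1)$, inside any bounded gap the arrangement reads (from $\beta_k$ rightward) as $E,R,E,R,\ldots,R,E$ (or just $R$ when $n_k=0$), so every even-indexed singularity is a resonance; reading from $\alpha_{k+1}$ leftward reverses the list but preserves the alternation, so the condition at $\alpha_{k+1}$ also holds. The same alternating pattern, starting at the adjacent band edge, works in the two unbounded components. Thus $(O_1)$ is satisfied on all of $\bbR$, and since no non-real points occur, on all of $\bbC$.

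I do not expect any serious obstacle: the only place that requires care is the parity bookkeeping in the interlacing definition, which is why inserting exactly one resonance between consecutive eigenvalues (and nothing else, except a single midpoint resonance in eigenvalue-free gaps to repair $(O_2)$) is the natural choice. Once Theorem~\ref{resonanceEv} is invoked it delivers an eventually periodic $\calJ$ realising the prescribed spectrum, completing the argument.
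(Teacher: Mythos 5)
Your proposal is correct and is exactly the argument the paper intends: the corollary is stated as an immediate consequence of Theorem~\ref{resonanceEv}, with the freedom in choosing resonances noted in the accompanying remark, and your gap-by-gap placement (one resonance between consecutive eigenvalues, plus a midpoint resonance in each eigenvalue-free bounded gap) is a valid configuration satisfying $(O_1)$--$(O_4)$. The only difference is that you supply the explicit bookkeeping the paper leaves implicit.
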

\begin{remark}
Of course such $\calJ$ is far from being unique because we can choose resonances in infinitely many ways.
\end{remark}

\subsection{Inverse resonance problem for super-exponentially decaying perturbations: existence}\label{ssResonanceExistence}

One can now fully analyze the inverse resonance problem in the case of exponential perturbations as well. We will restrict ourselves to the case of super-exponential perturbations though, since it is cleaner and satisfies uniqueness, which makes it more interesting, more natural, and more satisfying. We just note that for the class of exponential perturbations~\eqref{exp} (with $R<\infty$), the uniqueness of the inverse problem clearly does not hold, and the necessary and sufficient conditions for the existence are the same as in the next theorem but with: $\calE$ instead of $\bbC$ in $(O_1)$; ``If $[\beta_k,\alpha_{k+1}]$ belongs to $\calE$'' added in the beginning of $(O_2)$; $(O_5)$ modified to state that the set of resonances contains no accumulation point in $\operatorname{Int}(\calE)$. We leave the details to the reader and for the rest of the section restrict ourselves to the class of super-exponential perturbations, that is, all the Jacobi matrices satisfying~\eqref{exp} with $R=\infty$.



\begin{theorem}\label{resonanceExp}
Let $\{R_j\}_{j=1}^{K}$ $(0\le K\le \infty)$ and $\{E_j\}_{j=1}^N$ $(0\le N< \infty)$ be two sequences of complex numbers  (possibly with multiplicities). These two sequences are respectively resonances and eigenvalues of a Jacobi operator from~\eqref{exp} with $R=\infty$ if and only if
they satisfy 
\begin{itemize}
\item[$(O_1)$] $\{E_j\}_{j=1}^N$ oddly interlace with $\{R_j\}_{j=1}^{K}$ on $\bbC$ $($see Def.~\ref{OI_light}$)$;\footnote{Again, we remind that $(O_1)$ includes $\{R_j\}_{j=1}^K\cap\{E_j\}_{j=1}^N = \varnothing$ as part of the Definition~\ref{OI_light}.}
\item[$(O_2)$] Each gap $[\beta_k,\alpha_{k+1}]$ contains an odd number of points from $\{E_j\}_{j=1}^N \cup \{R_j\}_{j=1}^{K}$ $($counting with multiplicities$)$;
\item[$(O_3)$] $E_j\in\bbR\setminus\fre$ for every $j$; each $E_j$ is of multiplicity $1$;
\item[$(O_4)$] $R_j\in\bbC\setminus\Int(\fre)$ and they are real or come in complex conjugate pairs $($counting with multiplicities$)$; if $R_{j}\in\cup_{j=1}^p\{\alpha_j,\beta_j\}$, then the multiplicity of $R_{j}$ is $1$;
\item[$(O_5)$] If $K=\infty$ then $\lim_{j\to\infty} |R_j| = \infty$.
\end{itemize}
\end{theorem}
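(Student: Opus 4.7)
The plan is to mirror the structure of the proof of Theorem~\ref{resonanceEv}, replacing the polynomial construction by a Weierstrass factorization of an entire function. For \emph{necessity}, given $\calJ$ satisfying~\eqref{exp} with $R=\infty$, Theorem~\ref{sp} applies in the $R\to\infty$ limit (noting that the domain $\calE$ exhausts all of $\bbC$ in that limit), producing an entire real-analytic function $a(z)$ whose zero set consists exactly of the eigenvalues and anti-bound states of $\calJ$ and which satisfies the sign condition on $\fre$. From this, $(O_1)$ and $(O_3)$ follow from $(S_b)$; $(O_4)$ comes from real-analyticity of $a$ on $\bbR$ (so non-real zeros come in conjugate pairs), from $(S_a)$ (no zeros in the interior of bands), and from the integrability of $\sqrt{|r|}/|a|$ at band edges (forcing zeros of $a$ at $\alpha_k,\beta_k$ to be simple); $(O_2)$ records that $a$ must reverse sign between consecutive bands and therefore have an odd number of zeros in each gap; and $(O_5)$ holds because the zeros of an entire function cannot accumulate in any bounded set.

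For \emph{sufficiency}, given sequences $\{R_j\}_{j=1}^K$ and $\{E_j\}_{j=1}^N$ satisfying $(O_1)$--$(O_5)$, construct an entire function $a(z)$ with zero set equal to these data via a Weierstrass product
\[
a(z) = A \prod_{j=1}^N (z-E_j) \prod_{j=1}^K E_{p_j}\!\left(\tfrac{z}{R_j}\right),
\]
where $E_p(w)=(1-w)\exp\bigl(w+w^2/2+\cdots+w^p/p\bigr)$ is the standard Weierstrass elementary factor. The exponents $p_j$ are chosen large enough to guarantee convergence, which is possible by $(O_5)$. To make $a$ real, one uses a common exponent for each conjugate pair $\{R_j,\bar R_j\}$ from $(O_4)$, so that the two corresponding factors combine into a real-coefficient expression. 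By $(O_3)$ and $(O_4)$, $a$ has no zeros in $\Int(\fre)$ and at most simple zeros at band edges, ensuring $\sqrt{|r(x)|}/|a(x)|$ is integrable on $\fre$; by $(O_2)$ the sign of $a$ alternates between consecutive bands in the same way as $\sg_\fre$, so choosing $\sgn A$ appropriately yields $(S_a)$. Condition $(O_1)$ directly gives $(S_b)$, and $(S_c)$ is obtained by defining $w_j$ through~\eqref{weights}; the resulting weights are automatically positive, by the computation done in the proof of Theorem~\ref{sp}. Finally, $|A|$ is normalized so that the total mass of the measure $\mu$ defined by~\eqref{meas} equals $1$.

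With the hypotheses of Theorem~\ref{sp} with $R=\infty$ verified, that theorem produces a Jacobi matrix $\calJ$ with super-exponentially decaying coefficients whose spectral measure is $\mu$; by construction the eigenvalues and anti-bound states of $\calJ$ are precisely $\{E_j\}$ and $\{R_j\}$. The only nontrivial technical point is ensuring that the freedom inherent in the Weierstrass exponential prefactors does not conflict with the reality, sign, and integrability conditions; this is routine once the factors for each conjugate pair share a common exponent. It is worth noting that this freedom is also exactly why uniqueness fails in this class --- two distinct entire functions $a$ with the same zero set but different exponential prefactors produce two distinct Jacobi matrices with identical resonance and eigenvalue data, which is consistent with the paper's uniqueness result being restricted to the subclass~\eqref{recoverable} with $\eta>2(g+1)$.
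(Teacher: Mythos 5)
Your argument is correct and follows essentially the same route as the paper: necessity is read off from Theorem~\ref{sp} with $R=\infty$ (with $(O_5)$ coming from the non-accumulation of zeros of an entire function), and sufficiency is obtained by building $a(z)$ as a Weierstrass product over the prescribed zero set and then feeding it back into the $(S_R)\Rightarrow(P_R)$ direction, exactly as in the paper. Your extra care about pairing conjugate elementary factors to keep $a$ real is a detail the paper leaves implicit, and your closing remark about the non-uniqueness caused by the exponential prefactors matches the paper's own observation.
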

\begin{proof}
The necessity follows from Theorem~\ref{sp} just as in Theorem~\ref{resonanceEv}. $(O_5)$ follows from the fact that $\{E_j\}_{j=1}^N \cup \{R_j\}_{j=1}^{K}$ are the zeros of an entire function.

To show sufficiency, given $\{R_j\}_{j=1}^{\infty}$ and $\{E_j\}_{j=1}^N$, form an entire function $a(z)$ having $\{R_j\}_{j=1}^{K}\cup\{E_j\}_{j=1}^N$ as the set of its zeros (repeated according to their multiplicities). Indeed, due to the condition $\lim_{j\to\infty} |R_j| = \infty$, this can be done by forming a convergent infinite product of primary factors (theorem of Weierstrass, see, e.g.,~\cite[Sect~1.3]{bLevin}). The rest of the argument is analogous to the proof of Theorem~\ref{resonanceEv}.

Note that we can form many Weierstrass products with the same locations of zeros, which means that the uniqueness of the inverse resonance problem will not hold unless we restrict the class of Jacobi operators. We do this in the next subsection.
\end{proof}

\subsection{Inverse resonance problem for super-exponentially decaying perturbations: uniqueness}\label{ssResonanceUniqueness}
As we saw above, we reduced the inverse resonance problem to the problem of recovering an entire function from the locations of its zeros. This, of course, is classical and well-known. Combining this with our Baxter-type theorem, we obtain the following statement.

\begin{theorem}
Consider the class $\mathfrak{R}$ of Jacobi matrices $(a_n,b_n)_{n=1}^\infty$ satisfying
\begin{equation}\label{classR}
\limsup_{n\to\infty} \frac{n\log n}{\log (|a_n-a_n^{\circ}| + |{b}_n-b_n^{\circ}|)^{-1}} < \frac{1}{2p},
\end{equation}
where $(a_n^{\circ},b_n^{\circ})_{n=1}^\infty$ is a periodic Jacobi matrix from $\calT_\fre$.

Let $\{R_j\}_{j=1}^{K}$ $(0\le K\le \infty)$  and $\{E_j\}_{j=1}^N$ $(0\le N< \infty)$ be two sequences of complex numbers $($possibly with multiplicities$)$.  These two sequences are respectively resonances and eigenvalues of a Jacobi operator from the class $\mathfrak{R}$ if and only if 
they satisfy $(O_1)$--$(O_5)$ of Theorem~\ref{resonanceExp} and
\begin{equation}\label{sumResonances}
\sum_{j=1}^K \frac{1}{|R_j|^\alpha} < \infty
\end{equation}
for some $0<\alpha<1$. Moreover, such a Jacobi matrix is unique.
\end{theorem}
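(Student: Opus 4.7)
The plan is to convert the problem, via Theorem~\ref{baxterSuper}, into a question about entire functions of growth order strictly less than $1$, and then invoke the classical Hadamard factorization. For necessity, any $\calJ\in\mathfrak{R}$ satisfies the hypotheses of Theorem~\ref{sp} with $R=\infty$, so its spectral measure is~\eqref{meas} with $a(z)$ entire. Theorem~\ref{baxterSuper} translates~\eqref{classR} into the statement that the growth order $\rho$ of $a(z)$ is strictly less than $1$. The zero set of $a(z)$ is precisely $\{E_j\}\cup\{R_j\}$ (with multiplicity) by $(S_b)$ and the Remark after Theorem~\ref{sp}, and the classical Borel/Hadamard inequality between order and convergence exponent gives $\sum|R_j|^{-\alpha}<\infty$ for every $\alpha\in(\rho,1)$. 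Conditions $(O_1)$--$(O_5)$ are then inherited directly from Theorem~\ref{resonanceExp}.

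For sufficiency, suppose $\{R_j\}$, $\{E_j\}$ satisfy $(O_1)$--$(O_5)$ and~\eqref{sumResonances} for some $\alpha<1$. Since $\alpha<1$ and $|R_j|\to\infty$, we get $\sum|R_j|^{-1}<\infty$, so the genus-zero Weierstrass product
\[
\tilde a(z)=\prod_j\left(1-\tfrac{z}{z_j}\right),
\]
with $\{z_j\}=\{E_j\}\cup\{R_j\}$ (respecting multiplicity), converges to an entire function of growth order at most $\alpha<1$. I would then set $a(z)=A\tilde a(z)$, choosing $\sgn A$ so that $a(x)>0$ on $(\alpha_p,\beta_p)$ (possible by $(O_2)$, which gives the correct parity of real zeros per gap and hence the sign-alternation needed for $(S_a)$) and $|A|$ so that~\eqref{meas} is a probability measure; positivity of the weights in~\eqref{weights} is automatic from $(O_1)$ by the computation at the start of the proof of Theorem~\ref{sp}. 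Theorem~\ref{sp} with $R=\infty$ then produces a Jacobi matrix $\calJ$ with the prescribed resonances and eigenvalues, and running Theorem~\ref{baxterSuper} in the reverse direction places $\calJ$ in $\mathfrak{R}$.

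For uniqueness, suppose $\calJ_1,\calJ_2\in\mathfrak{R}$ share the same data, with associated entire functions $a_1,a_2$ of order $<1$. Then $a_1/a_2$ is a zero-free entire function of order $<1$, so it has the form $e^{g(z)}$; but $e^g$ has order equal to $\deg g$ for polynomial $g$ and infinite order for transcendental $g$, forcing $g$ to be constant. Hence $a_1=c\,a_2$ for some nonzero $c\in\bbC$. Both the a.c.\ density and the point masses in~\eqref{meas} scale as $|c|^{-1}$, so the total-mass-one normalization yields $|c|=1$, and the sign condition in $(S_a)$ on $(\alpha_p,\beta_p)$ then forces $c=+1$. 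Therefore $a_1\equiv a_2$, the spectral measures coincide, and $\calJ_1=\calJ_2$.

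The central point, and the explanation for the exact value of the cutoff $1/(2p)$ in~\eqref{classR}, is the factor $\rho/(2p)$ in Theorem~\ref{baxterSuper}: it is calibrated so that the hypothesis~\eqref{classR} corresponds to growth order strictly below $1$, which is precisely the Hadamard regime in which an entire function is determined by its zeros up to a single multiplicative constant (no exponential prefactor $e^{P(z)}$ with $\deg P\ge 1$ can appear). The main obstacle is not a hard analytic estimate but rather the bookkeeping of pinning this constant: one must use both halves of $(S_a)$ (the sign on $(\alpha_p,\beta_p)$) and the probability-measure normalization, and one must verify compatibility of~\eqref{sumResonances} with the convergence of the genus-zero canonical product, both of which hinge on $\alpha<1$.
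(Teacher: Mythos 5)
Your proof is correct and takes essentially the same route as the paper: use Theorem~\ref{baxterSuper} to identify $\mathfrak{R}$ with the class where $a(z)$ is entire of growth order strictly less than~$1$, then invoke Hadamard's factorization theorem both for the zero-set characterization~\eqref{sumResonances} and for uniqueness, with the remaining multiplicative constant pinned by the sign condition in $(S_a)$ and the probability normalization. One small imprecision in the uniqueness step: the assertion that $a_1/a_2$ has order $<1$ is not immediate from the orders of $a_1$ and $a_2$ alone; the cleaner route is to apply Hadamard to each $a_i$ separately, concluding that each is a scalar multiple of the \emph{same} genus-zero canonical product of the common zeros, so that $a_1/a_2$ is a constant outright.
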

\begin{remarks}
1. The set $\mathfrak{R}$ is very natural here: e.g., allowing perturbations with $\le$ in~\eqref{classR} would mean allowing all the entire functions $a(z)$ of order $1$, which would allow two entire functions to have the same sets of zeros. This would violate the uniqueness.

2. To restate~\eqref{classR}, $\mathfrak{R}$ consists of perturbations for which there exists $\eta>2p$ and $C>0$ such that
\begin{equation}\label{classR2}
|a_n-a_n^{\circ}| + |{b}_n-b_n^{\circ}| \le C e^{-\eta n \log n}.
\end{equation}
Typically authors restrict themselves to a narrower class of perturbations $|a_n-a_n^{\circ}| + |{b}_n-b_n^{\circ}| \le C e^{-n^{\beta}}, \beta>1,$ in order to get uniqueness.
\end{remarks}
\begin{proof}
For the operators of class $\mathfrak{R}$, the function $a(z)$ is of order strictly less than 1 (Theorem~\ref{baxterSuper}). By Hadamard's factorization theorem (see, e.g.,~\cite[Sect~2.7]{bBoas}), any such a function is uniquely determined by the set of its zeros. Moreover, zeros of these functions are precisely characterized by the condition~\eqref{sumResonances}, see, e.g.,~\cite[Sect~2.9]{bBoas}.
\end{proof}

\section{Inverse resonance problem: stability}\label{sResonanceStability}

We want to obtain a result that states that if resonances and eigenvalues of two Jacobi operators are pairwise close to each other, then their Jacobi coefficients are also close. In order for this to have any chance of success, we must restrict ourselves to the class of the Jacobi operators which are uniquely recoverable from the set of its resonances and eigenvalues. From the discussion in the previous section, we are led to consider the class of matrices $\frakR$, see~\eqref{classR2}. Let us now 
impose uniform bounds on the coefficients from above and from below.

\begin{definition}
Given $\eta>2p$, $C>0$, and $\gamma>0$, define $\frakR_{\eta,C,\gamma}$ to be the class of Jacobi matrices satisfying the following two conditions:
\begin{equation}\tag{H1}\label{H1}
|a_n-a_n^{\circ}| + |{b}_n-b_n^{\circ}| \le C e^{-\eta n \log n}
\end{equation}
for some  periodic Jacobi matrix $(a_n^{\circ},b_n^{\circ})_{n=1}^\infty$ from $\calT_\fre$ and
\begin{equation}\tag{H2}\label{H2}
a_n\ge \gamma.
\end{equation}
\end{definition}
\begin{remarks}


1. The conditions ~\eqref{H1} and ~\eqref{H2} are in fact necessary in order to obtain the stability result. Indeed, as we are about to see, if either ~\eqref{H1} or ~\eqref{H2} is not imposed, then we could find a Jacobi matrix with many resonances arbitrary close to $\pm2$. This would violate any hope for the stability as the following simple counterexample explains. Take $\fre=[-2,2]$ and take the spectral measure $\mu_n$~\eqref{meas} with 4 resonances  at $-2,-2-\frac1n, 2,2+\frac{1}{n^2}$ only, and $\tilde{\mu}_n$ with 4 resonances  at $-2, -2-\frac{1}{n^2}, 2,2+\frac{1}{n}$ only. As $n$ becomes large, these measures tend to the delta functions at $2$ and $-2$, respectively. Thus the first Jacobi coefficients $b^{(n)}_1$ and $\tilde{b}^{(n)}_1$ of $\mu_n$ and $\tilde{\mu}_n$ are never close to each other $b^{(n)}_1-\tilde{b}^{(n)}_1 = \int x d\mu_n(x) - \int x d\tilde{\mu}_n(x) \to 4$, even though the distances between the resonances do become infinitely small $| \tfrac1n - \tfrac{1}{n^2} | \to 0$.
\end{remarks}

We are interested in $\frakR_{\eta,C,\gamma}$ to be as large as possible, so without loss of generality we assume that the constant $C$ is large enough, and $\gamma$ and $\eta$ are close to $0$ and $2p$, respectively. Let us define a constant
\begin{equation*}
\tau = 1-\frac{2p}{\eta}
\end{equation*}
that will naturally appear in the estimates below, $0<\tau<1$.

\begin{theorem}\label{thmStability}
Consider $\frakR_{\eta,C,\gamma}$ for some $C>0,\eta>2p,\gamma>0$. There exist $Q>1$, $\varepsilon_0>0$, and $R_0>0$ $($depending on $\fre,\eta,C,\gamma$ only$)$ such that for any $0<\varepsilon<\varepsilon_0$ and $R>R_0$ the following holds.

Choose any $\calJ=(a_n,b_n)_{n=1}^\infty$ and $\tilde\calJ=(\tilde{a}_n,\tilde{b}_n)_{n=1}^\infty$ in $\frakR_{\eta,C,\gamma}$. Let $\{E_j\}_{j=1}^{N_e}$ and $\{\til{E}_j\}_{j=1}^{N_e}$ $(N_e<\infty)$ be the eigenvalues of $\calJ$ and $\til\calJ$, respectively. Let $\{R_j\}_{j=1}^{N_R}$ and $\{\til{R}_j\}_{j=1}^{N_R}$ $($repeated according to their multiplicities$)$ $(N_R<\infty)$ be those resonances of $\calJ$ that lie in the disk $ R \bbD = \{z: |z|\le R\}$. If
\begin{align*}
|E_j-\til{E}_j| < \varepsilon & \quad \mbox{ for all } j, \\
|R_j-\til{R}_j| < \varepsilon & \quad \mbox{ for all } j,
\end{align*}
then
\begin{equation}\label{estim}
|a_n - \til{a}_n | + |b_n - \til{b}_n| \le Q^{n^2} \left(\frac{1}{R^\tau} + \sqrt{\veps}\right) \quad \mbox{ for all } n.
\end{equation}
\end{theorem}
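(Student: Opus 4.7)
The plan is to follow the pipeline (resonances + eigenvalues) $\to$ $a(z)$ $\to$ spectral measure $\mu$ $\to$ Jacobi coefficients, tracking a quantitative closeness bound at each step. By Theorem~\ref{baxterSuper} the condition~\eqref{H1} forces $a(z)$ and $\tilde a(z)$ to be entire functions of exponential growth order at most $\rho = 2p/\eta < 1$, with a growth bound $|a(z)| \le K\exp(|z|^{\rho+o(1)})$ uniform over $\frakR_{\eta,C,\gamma}$. Since $\rho<1$, each admits a Weierstrass product of genus zero,
\[
a(z) = A\prod_j (1-z/\zeta_j), \qquad \tilde a(z)=\tilde A\prod_j(1-z/\tilde\zeta_j),
\]
where $\{\zeta_j\}$, $\{\tilde\zeta_j\}$ are the combined multisets of eigenvalues and resonances. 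Uniformity of $A$, $\tilde A$ and of the zero-counting function $n(r)=O(r^{\rho+o(1)})$ will follow from the $\frakR_{\eta,C,\gamma}$-uniform growth bound, combined with the mass normalization $\mu(\bbR)=1$.

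The first substantive step is to compare $a(z)$ and $\tilde a(z)$ pointwise on the fixed compact set $\fre$. I would split each product at radius $R$ and work with $\log(a/\tilde a)$: the tail $\prod_{|\zeta_j|>R}(1-z/\zeta_j)$ differs from $1$ by $O(R^{-\tau})$ uniformly on $\fre$ (this is where $\tau=1-\rho$ is born, via the standard convergence-exponent estimate), and likewise for the $\tilde\zeta$-tail. For the inside-$R\bbD$ part one pairs $\zeta_j \leftrightarrow \tilde\zeta_j$ using the $\veps$-closeness hypothesis and bounds
\[
\sum_{|\zeta_j|\le R}\Bigl|\log\tfrac{1-z/\zeta_j}{1-z/\tilde\zeta_j}\Bigr| \;\le\; C\veps\sum_j |\zeta_j|^{-1},
\]
which is finite since the convergence exponent of $\{\zeta_j\}$ is at most $\rho<1$. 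Combining yields $|a(z)-\tilde a(z)|\le C(R^{-\tau}+\veps)$ on $\fre$ away from the band edges, plus uniform control of $a/\tilde a$ on appropriate compact sets.

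Next I would use Theorem~\ref{sp} to translate this into closeness of the spectral measures. The a.c.\ densities $\sqrt{|r|}/|a|$ and $\sqrt{|r|}/|\tilde a|$ are compared pointwise (the quotient has a removable singularity at the band edges where $|a|$ and $\sqrt{|r|}$ vanish in a controlled way), and the point masses $w_j=2\pi|\sqrt{r(E_j)}/a'(E_j)|$ are compared via Cauchy estimates for $a'-\tilde a'$ together with the observation that a $\veps$-shift of $E_j$ changes $\sqrt{r(E_j)}$ by at most $O(\sqrt\veps)$ in the worst case when $E_j$ approaches $\partial\fre$ --- this is the source of the $\sqrt\veps$ term. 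Bounding
\[
\Bigl|\textstyle\int x^k\,d\mu-\int x^k\,d\tilde\mu\Bigr|\le \rho_0^k\,(R^{-\tau}+\sqrt\veps)
\]
(with $\rho_0$ the radius of a fixed disk containing $\supp\mu\cup\supp\tilde\mu$, which is uniform in $\frakR_{\eta,C,\gamma}$ by~\eqref{H1}) reduces the problem to moment closeness. Finally, the first $2n$ moments determine $a_1,\ldots,a_n,b_1,\ldots,b_n$ via Gram--Schmidt / Cholesky of the Hankel matrix, whose condition number on the class at hand is bounded by $Q^{n^2}$, producing the claimed $Q^{n^2}(R^{-\tau}+\sqrt\veps)$ bound.

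The hard part will be the stage transferring $|a-\tilde a|\lesssim R^{-\tau}+\veps$ on $\fre$ into the desired $R^{-\tau}+\sqrt\veps$ measure comparison: one has to carefully handle eigenvalues and resonances that drift to within $O(\veps)$ of a band edge, where $\sqrt{r(x)}$ in both the a.c.\ density and the weight formula~\eqref{weights} is singular, and it is precisely this case that produces the characteristic (and, per the remarks following the theorem, optimal in $\veps$) $\sqrt\veps$ loss. A secondary technical burden is ensuring that all the constants ($A$, the off-edge lower bound on $|a|$, and the moment-to-coefficients condition number $Q$) are uniform over the entire class $\frakR_{\eta,C,\gamma}$ rather than for a single pair $(\calJ,\tilde\calJ)$.
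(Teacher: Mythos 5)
Your pipeline coincides with the paper's: Hadamard/Weierstrass factorization of $a$ and $\tilde a$ (entire of order $2p/\eta<1$), a split of the zeros by location with the tail of the zero-counting function producing $R^{-\tau}$, a H\"older-$\tfrac12$ estimate at the band edges producing $\sqrt\veps$, a moment comparison, and a passage from moments to coefficients via Hankel determinants and Heine's formula with a $Q^{n^2}$ condition number. All three sources of the final bound are correctly identified.

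The genuine gap is that what you dismiss as a ``secondary technical burden'' --- uniformity of the constants over $\frakR_{\eta,C,\gamma}$ --- is in fact the core of the argument and requires inputs your plan does not contain. Two are essential. First, the uniform zero-counting bound $\nu_a(r)\le M r^{1-\tau}$: Theorem~\ref{baxterSuper} is only a qualitative $\limsup$ statement, so it cannot give a class-uniform growth bound on $a$; the paper instead passes to the block operator $\Delta(\calJ)$, bounds the Jost determinant $\det u$ through the Geronimo--Case recursion with constants controlled by (H1)--(H2) and the capacity identity $\prod_j a_j^{\circ}=\operatorname{Cap}(\fre)^p$, and applies Jensen's formula (which needs a uniform two-sided bound on $|\det u(0)|=\prod_j|\det A_j^{-1}|$). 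Second, and more critically, one must rule out clustering of singularities: at most one zero of $a$ in a fixed $\delta_0$-neighbourhood of each band edge, none in a strip around $\fre$, and a uniform $\delta_0$-separation between each eigenvalue and every other singularity. The paper extracts this from the Case $C_0$ sum rule combined with the probability normalization of $\mu$: the entropy-type integral $\int_\fre |r(y)|^{-1/2}|\Delta'(y)|\log a(y)\,dy$ is uniformly bounded above, whereas two zeros coalescing within distance $\delta$ of a band edge would force a contribution of size $\delta^{-1/2}$. Without this separation, your lower bounds on $|a|$ and on the normalization constants $A,\tilde A$, your comparison of the eigenweights $w_j=2\pi\bigl|\sqrt{r(E_j)}/a'(E_j)\bigr|$ (where $a'(E_j)$ could otherwise be arbitrarily small), and the H\"older step itself (which must be applied to at most one moving singularity per edge neighbourhood) all break down --- and the paper's own example of two resonances collapsing onto $\pm 2$ shows that stability genuinely fails when such clustering is allowed.
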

\begin{remarks}
1. As is clear from the proof, $\sqrt{\veps}$ is sharp here. It is caused by the resonances close to the endpoints of $\fre$. If one forbids resonances in small neighbourhoods of the endpoints, then the right-hand side of~\eqref{estim} can be improved to $Q^{n^2}\left(\frac{1}{R^\tau} + \veps\right)$.

2. Resonances of $\calJ$ and $\tilde\calJ$ outside of $R\bbD$ do not have to be close to each other.

3. One can also allow an eigenvalue $E_j$ of $\calJ$ migrating into a resonance $\til{R}_j$ of $\til\calJ$, but the distance from $E_j$ to $\til{R}_j$ should then be measured not as $|E_j-\til{R}_j|<\veps$, but rather as $\operatorname{dist}(E_j,\fre)+\operatorname{dist}(\til{R}_j,\fre)<\veps$ (which is natural if one thinks about the surface $\calS_\fre$). To accommodate such a situation, just apply our theorem twice: first move the eigenvalue $E_j$ to the closest endpoint of $\fre$. It ceases being an eigenvalue since the weight~\eqref{weights2} becomes zero, but all the estimates in the proof still work. Then apply the theorem again to move the resonance from the endpoint to $\til{R}_j$.

4. If one narrows $\frakR_{\eta,C,\gamma}$ by replacing~\eqref{H1} with $|a_n-a_n^{\circ}| + |{b}_n-b_n^{\circ}| \le C e^{-n^{\beta}}$ with some $\beta>1$, then~\eqref{estim} can be improved to $$
|a_n - \til{a}_n | + |b_n - \til{b}_n| \le Q^{n^2}\left(\frac{(\log R)^{\beta/(\beta-1)}}{R} + \veps\right)
$$
by following the exact same proof.
\end{remarks}

We prove the theorem in the end of the section after establishing a series of lemmas. We start with a collection of some elementary inequalities.

\begin{lemma}\label{lemTech}
\begin{itemize}
\item[(a)] For any $q\in\bbC$,
$$
-\big| \tfrac{1-q}{q} \big| \le \log |q| \le |1-q|.
$$
\item[(b)] For any $q_j\in\bbC$,
$$\Big|1-\prod_{j} |q_j| \Big| \le \Big(\sum_j |1-q_j|\Big)e^{\sum_j |1-q_j|}.$$
\item[(c)] If $|q_j|\le H$, $ |\tilde{q}_j|\le H$, and $|q_j-\tilde{q}_j|\le e$, then
$$
|q_1\ldots q_n - \tilde{q}_1\ldots \tilde{q}_n|< n H^{n-1} e.
$$
\item[(d)] If $1/L\le |q_j|\le H$, $1/H\le |\tilde{q}_j|\le H$, and $|q_j-\tilde{q}_j|\le e$, then
$$
\left| \sqrt{\tfrac{q_1 q_2}{q_3 q_4}} - \sqrt{\tfrac{\tilde{q}_1 \tilde{q}_2}{\tilde{q}_3 \tilde{q}_4}} \right| \le 2 H^4 e.
$$
\end{itemize}
\end{lemma}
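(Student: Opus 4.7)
The plan is to handle each of the four parts by elementary manipulations; no deep ideas are required, and the estimates are of the telescoping/standard real-variable type.

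For part (a), I would reduce both inequalities to the real-variable bound $\log(1+x)\le x$ valid for $x>-1$. The upper bound follows by writing $\log|q|=\log(1+(|q|-1))$: when $|q|\ge 1$ this gives $\log|q|\le |q|-1\le |1-q|$ by the reverse triangle inequality, and when $|q|<1$ the left side is nonpositive so the bound is trivial. The lower bound is obtained by applying the upper bound with $q$ replaced by $1/q$, which yields $-\log|q|=\log|1/q|\le |1-1/q| = |(1-q)/q|$.

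For part (b), setting $p_n=\prod_{j=1}^n|q_j|$ I would telescope as $p_n-1=\sum_{k=1}^n p_{k-1}(|q_k|-1)$, then combine $p_{k-1}\le \prod_{j<k}(1+|1-q_j|)\le \exp(\sum_j|1-q_j|)$ with $\bigl||q_k|-1\bigr|\le|1-q_k|$ and sum. Part (c) is the standard telescoping identity
\[
q_1\cdots q_n - \tilde q_1\cdots\tilde q_n = \sum_{k=1}^n q_1\cdots q_{k-1}\,(q_k-\tilde q_k)\,\tilde q_{k+1}\cdots\tilde q_n,
\]
where each of the $n$ terms is bounded by $H^{k-1}\cdot e\cdot H^{n-k}=eH^{n-1}$.

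For part (d) I would set $a=q_1q_2/(q_3q_4)$ and $b=\tilde q_1\tilde q_2/(\tilde q_3\tilde q_4)$ and use the identity $\sqrt a-\sqrt b=(a-b)/(\sqrt a+\sqrt b)$. The numerator is controlled by reducing to (c) after clearing denominators (using the uniform lower bounds on $|q_3q_4|$ and $|\tilde q_3\tilde q_4|$), while the denominator is bounded below via $|\sqrt a|,|\sqrt b|\ge 1/H^2$. The only subtle point worth attention is the choice of branches for the square roots, which must be made so that $\sqrt a$ and $\sqrt b$ lie on a common branch so that their sum does not cancel; this consistency is implicit in the context in which the lemma is later applied and is the only nontrivial obstacle, all other estimates being routine.
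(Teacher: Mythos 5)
Parts (a), (b), and (c) are correct and essentially coincide with the paper's argument: (a) is the standard $\log(1+x)\le x$ bound plus the substitution $q\mapsto q^{-1}$, and (c) is the same telescoping identity. For (b) the paper instead substitutes $q=\prod q_j$ into $\bigl|1-|q|\bigr|\le\bigl|\log|q|\bigr|e^{|\log|q||}$ and invokes (a); your telescoping of $p_n-1=\sum_k p_{k-1}(|q_k|-1)$ is an equally elementary and arguably cleaner route to the same bound.

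Part (d) is where your decomposition genuinely differs from the paper's, and as sketched it does not deliver the stated constant. Writing $\sqrt a-\sqrt b=(a-b)/(\sqrt a+\sqrt b)$ with $a=q_1q_2/(q_3q_4)$, clearing denominators costs you a factor of order $H^4$ in the numerator (since $|q_3q_4\tilde q_3\tilde q_4|\ge H^{-4}$), (c) then gives $|a-b|\lesssim 4H^7e$, and dividing by $|\sqrt a+\sqrt b|\ge 2H^{-2}$ leaves you with roughly $2H^9e$ rather than $2H^4e$. The paper avoids this loss by rationalizing only the denominators,
\begin{equation*}
\left| \sqrt{\tfrac{q_1 q_2}{q_3 q_4}} - \sqrt{\tfrac{\tilde{q}_1 \tilde{q}_2}{\tilde{q}_3 \tilde{q}_4}} \right| = \frac{\bigl|\sqrt{q_1 q_2 \tilde{q}_3 \tilde{q}_4} - \sqrt{\tilde{q}_1 \tilde{q}_2 q_3 q_4}\bigr|}{\sqrt{|q_3 q_4 \tilde{q}_3 \tilde{q}_4|}} \le H^2 \bigl|\sqrt{q_1 q_2 \tilde{q}_3 \tilde{q}_4} - \sqrt{\tilde{q}_1 \tilde{q}_2 q_3 q_4}\bigr|,
\end{equation*}
and then applying (c) to the four square-root factors, each of modulus at most $\sqrt H$ and with $|\sqrt{q_j}-\sqrt{\tilde q_j}|\le\tfrac12\sqrt H\,e$; this yields $H^2\cdot 4(\sqrt H)^3\cdot\tfrac12\sqrt H\,e=2H^4e$. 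Since the lemma is only used in Lemma~\ref{lemMomToCoeff}, whose exponents are explicitly declared non-optimal, your weaker power of $H$ would still suffice for the application, but it does not prove the inequality as stated. Your remark about branch consistency of the square roots is fair and applies equally to the paper's proof (the bound $|\sqrt{q_j}-\sqrt{\tilde q_j}|\le\tfrac12\sqrt H\,e$ also needs $|\sqrt{q_j}+\sqrt{\tilde q_j}|\ge 2/\sqrt H$); in the only application the $q_j$ are positive Hankel determinants, so this is harmless.
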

\begin{proof}
The right-hand side inequality in (a) is standard. The left-hand side inequality is obtained from the right-hand side by plugging in $q^{-1}$ instead of $q$.

(b) can be obtained by putting $q=\prod q_j$ into the elementary inequality $|1-|q|| \le |\log|q|| e^{\log|q|}$ and then using the right-hand side inequality in (a).

(c) can be obtained by adding and subtracting $q_1\ldots q_k \tilde{q}_{k+1}\ldots \tilde{q}_n$ for every $k$.

Finally,
$$
\left| \sqrt{\tfrac{q_1 q_2}{q_3 q_4}} - \sqrt{\tfrac{\tilde{q}_1 \tilde{q}_2}{\tilde{q}_3 \tilde{q}_4}} \right| = \frac{|\sqrt{q_1 q_2 \tilde{q}_3 \tilde{q}_4} - \sqrt{\tilde{q}_1 \tilde{q}_2 q_3 q_4} | }{\sqrt{q_3 q_4 \tilde{q}_3 \tilde{q}_4}} \le H^2 |\sqrt{q_1 q_2 \tilde{q}_3 \tilde{q}_4} - \sqrt{\tilde{q}_1 \tilde{q}_2 q_3 q_4} |.
$$
Now note that $|\sqrt{q}_j - \sqrt{\tilde{q}_j}| \le \tfrac12 \sqrt{H} e$ and apply (c) with $\sqrt{q_j}$'s instead of $q_j$'s to finish the proof of (d).
\end{proof}

Throughout this section there will be various positive constants appearing which we will start denoting by $M_j$ ($j=0,1,2,\ldots$). We will have to be careful to make sure that each of these constants is uniform, that is, they will all depend on $\fre,\eta,C, \gamma$ (and possibly on the choice of the preceding $M_j$'s), but do not depend on a specific choice of $\calJ$ from $\RR$.

\begin{lemma}\label{lemCoef}
For any $\eta>2p,C>0,\gamma>0$ there exists a constant $M_0> 0$ such that for any matrix $\calJ=(a_n,b_n)_{n=1}^\infty$ from $\frakR_{\eta,C,\gamma}$, the following holds true:
\begin{itemize}
\item[(a)] $|a_n|+|b_n| \le M_0$ for all $n$;
\item[(b)] For $(a_n^{\circ},b_n^{\circ})_{n=1}^\infty$ from ~\eqref{H1},
$$
-M_0 \le \sum_{j=1}^{\infty} \log\Big(\frac{a_j}{a_j^{\circ}}\Big) \le M_0,
$$
where the series converges absolutely.
\end{itemize}
\end{lemma}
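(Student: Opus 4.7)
The plan is straightforward: both parts follow from combining the quantitative bound \eqref{H1} with uniform bounds on the reference periodic matrix $(a_n^{\circ},b_n^{\circ})$. The key preliminary observation is that $\calT_\fre$ is a compact $(p-1)$-torus (see Remark 2 after Theorem~\ref{resonanceEv}), so the map sending a matrix in $\calT_\fre$ to its Jacobi parameters has compact image in $\ell^{\infty}$. Consequently, there exist constants $M_a, M_b > 0$ and $\gamma_0 > 0$, depending only on $\fre$, such that every $(a_n^{\circ},b_n^{\circ})_{n=1}^{\infty} \in \calT_\fre$ satisfies $a_n^{\circ} \le M_a$, $|b_n^{\circ}| \le M_b$, and $a_n^{\circ} \ge \gamma_0$ for all $n$. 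Crucially, these constants do \emph{not} depend on which matrix of $\calT_\fre$ is chosen as the reference.

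For part (a), I would simply write $|a_n| \le a_n^{\circ} + |a_n - a_n^{\circ}| \le M_a + C e^{-\eta n \log n} \le M_a + C$ and analogously $|b_n| \le M_b + C$, using \eqref{H1}. Setting $M_0$ larger than $M_a + M_b + 2C$ handles this half of the lemma.

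For part (b), I would use Lemma~\ref{lemTech}(a) applied to $q_j := a_j/a_j^{\circ}$. From part (a) and \eqref{H2} we get $\gamma/M_a \le q_j \le M_0/\gamma_0$, so $q_j$ lies in a fixed compact subinterval of $(0,\infty)$ uniformly in $\calJ \in \RR$ and in $j$. Lemma~\ref{lemTech}(a) therefore yields
\begin{equation*}
|\log q_j| \le \max\Bigl(|1-q_j|,\, \tfrac{|1-q_j|}{|q_j|}\Bigr) \le \tfrac{M_a}{\gamma}\, |1 - q_j| = \tfrac{1}{\gamma}\,|a_j - a_j^{\circ}|.
\end{equation*}
Combining with \eqref{H1} gives the absolutely convergent bound
\begin{equation*}
\sum_{j=1}^{\infty} \Bigl| \log\tfrac{a_j}{a_j^{\circ}} \Bigr| \le \tfrac{C}{\gamma} \sum_{j=1}^{\infty} e^{-\eta j \log j},
\end{equation*}
which is finite and depends only on $\eta, C, \gamma$. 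Enlarging $M_0$ if necessary completes the proof.

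There is no real obstacle: the entire argument is bookkeeping to verify that the constants produced depend only on $(\fre,\eta,C,\gamma)$ and not on the particular matrix $\calJ \in \RR$ or its reference $(a_n^{\circ},b_n^{\circ})$. The only point requiring a moment of thought is the uniform lower bound $a_n^{\circ} \ge \gamma_0 > 0$ over all of $\calT_\fre$, which is why we invoked compactness of the isospectral torus.
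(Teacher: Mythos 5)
Your proof is correct and follows the same overall architecture as the paper's: get uniform bounds on the reference coefficients $(a_n^{\circ},b_n^{\circ})$ over the isospectral torus, conclude (a) by the triangle inequality with \eqref{H1}, then control $\log(a_j/a_j^{\circ})$ via Lemma~\ref{lemTech}(a) for (b). The one genuine difference is in how the uniform bounds over $\calT_\fre$ are obtained. You invoke compactness of the torus (which is legitimate: the torus is finite-dimensional, the map to the $p$-periodic Jacobi parameters is continuous, and each $a_j^{\circ}>0$). The paper instead uses two explicit identities: $|a_j^{\circ}|, |b_j^{\circ}| \le \|\calJ^{\circ}\| = \sup\{|x| : x\in\fre\}$ for the upper bound, and the logarithmic-capacity formula $\prod_{j=1}^p a_j^{\circ} = \operatorname{Cap}(\fre)^p$ together with the upper bound to force a lower bound on each $a_j^{\circ}$. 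The paper's route is more quantitative (it produces concrete constants in terms of $\fre$ with no appeal to a topological fact), while your compactness argument is softer but shorter and relies only on material the paper has already established. Either is acceptable.

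One small slip: in your chain $|\log q_j| \le \tfrac{M_a}{\gamma}|1-q_j| = \tfrac{1}{\gamma}|a_j - a_j^{\circ}|$, the final equality is wrong since $|1-q_j| = |a_j - a_j^{\circ}|/a_j^{\circ}$ and $a_j^{\circ}$ need not equal $M_a$; you need the \emph{lower} bound $a_j^{\circ}\ge\gamma_0$ there, giving $\tfrac{M_a}{\gamma\gamma_0}|a_j-a_j^{\circ}|$. This only changes the constant, so the conclusion stands.
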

\begin{proof}
First of all note that for any $(a_n^{\circ},b_n^{\circ})_{n=1}^\infty = \calJ^{\circ}\in\calT_\fre$,
\begin{equation}\label{sEq1}
|a_j^{\circ}|, |b_j^{\circ}| \le ||\calJ^{\circ}|| = \sup \{|x|: x\in\fre\}.
\end{equation}
This gives a uniform upper bound on $|a_j^{\circ}|$ and $|b_j^{\circ}|$. Combining this with
\begin{equation*}\label{capacity}
\prod_{j=1}^{p} a_j^{\circ} = \operatorname{Cap}(\fre)^p,
\end{equation*}
where $\operatorname{Cap}(\fre)$ is the logarithmic capacity of the set $\fre$, see~\cite[Thm 5.5.17]{Rice},
we also obtain a uniform lower bound on $a_j^{\circ}$ for $\calJ^{\circ}\in\calT_\fre$.

Using \eqref{sEq1} and ~\eqref{H1}, we obtain (a).

\smallskip

Now use Lemma~\ref{lemTech}(a) to see that
\begin{equation}\label{sEq2}
-\sum_{j=1}^{\infty} \frac{|a_j-a_j^{\circ}|}{a_j} \le \sum_{j=1}^{\infty} \log\Big(\frac{a_j}{a_j^{\circ}}\Big) \le \sum_{j=1}^{\infty} \frac{|a_j-a_j^{\circ}|}{a_j^{\circ}}.
\end{equation}
Combining this with the lower bounds on $a_j$ and $a_j^{\circ}$, and using ~\eqref{H1}, we obtain a uniform bound in (b). The absolute convergence of the series in (b) can be shown to be equivalent to the convergence of the right-hand side of~\eqref{sEq2}.
\end{proof}

As before, let us view $\Delta(\calJ)$ as a block Jacobi matrix with the $p\times p$ matrix-valued Jacobi coefficients $(A_n,B_n)_{n=1}^\infty$, and let $\mathfrak{a}(z)$, see~\eqref{acPart}, be the denominator of the a.c. part of the spectral measure of $\Delta(\calJ)$. Finally, let $|A|=\sqrt{A^* A}$.

For any analytic function $g(z)$ we denote the zero counting function by
\begin{equation*}
\nu_g(R) := \sharp\{z: g(z)=0 \mbox{ and } |z|\le R\},
\end{equation*}
where we count the zeros according to their multiplicities.

\begin{lemma}\label{lemMV}
For any $\eta>2p,C>0,\gamma>0$ there exist a constant $M_8>0$ such that for any matrix $\calJ=(a_n,b_n)_{n=1}^\infty$ from $\frakR_{\eta,C,\gamma}$, the following holds true:
\begin{itemize}
\item[(a)] 
$||A_n|| +||B_n|| \le M_8$ for all $n$;
\item[(b)] $||A_n A_n^*-\bdone || + ||B_n|| \le M_8 e^{-\eta n\log n}$ for all $n$;
\item[(c)] For any $n$,
\begin{equation}\label{partC}
\Big| \sum_{j=1}^n \log\det|A_j| - p \sum_{j=1}^{pn} \log\Big(\frac{a_j}{a_j^{\circ}}\Big) \Big| \le M_8;
\end{equation}
\item[(d)]  $ ||A_n^{-1}|| \le M_8$; $\prod_{j=1}^n ||A_j^{-1}|| \le M_8$ for all $n$;
\item[(e)] $\nu_{\det\mathfrak{a}}(r)\le M_8 r^{2/\eta}$ for all $r>1$.
\end{itemize}
\end{lemma}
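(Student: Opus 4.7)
The plan is to establish (a)–(e) in that order, with each part leaning on Lemma~\ref{lemCoef}'s uniform scalar bounds together with the polynomial/Lipschitz structure that links the scalar coefficients of $\calJ$ to the matrix coefficients of $\Delta(\calJ)$. As in~\cite{K_merom,DKS}, I use the normalization of $\Delta(\calJ)$ as a block Jacobi operator under which a periodic $\calJ^\circ\in\calT_\fre$ maps to the ``free'' block matrix with $A_n^\circ=\bdone$, $B_n^\circ=\bdnot$, so that Theorem~\ref{baxterMatrix} applies directly to $\Delta(\calJ)$.

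For (a), the entries of $A_n$ and $B_n$ are polynomials (of fixed degree bounded in terms of $p$) in $\{a_j,b_j\}_{j\in W_n}$ for a window $W_n$ of width $O(p)$ around $np$, so Lemma~\ref{lemCoef}(a) yields the uniform bound. For (b), the same polynomial structure produces a Lipschitz estimate
$$
\|A_n-\bdone\|+\|B_n\|\le M\sum_{j\in W_n}\bigl(|a_j-a_j^\circ|+|b_j-b_j^\circ|\bigr),
$$
whose right-hand side is $O(e^{-\eta n\log n})$ by~\eqref{H1}. The passage from $\|A_n-\bdone\|$ to $\|A_nA_n^*-\bdone\|$ uses the factorization $A_nA_n^*-\bdone=A_n(A_n-\bdone)^*+(A_n-\bdone)$ together with (a).

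For (c), I would invoke the known identity expressing $\det|A_n|$ as a product of the ratios $a_k/a_k^\circ$ over $k$ in a window of length $p$ (this identity specializes in the periodic case to $\det|A_n^\circ|=1$, consistent with our normalization). Telescoping gives $\sum_{j=1}^n\log\det|A_j|=\sum_{k=1}^{np}\log(a_k/a_k^\circ)$, so the left-hand side of~\eqref{partC} collapses to $|(1-p)\sum_{k=1}^{np}\log(a_k/a_k^\circ)|$, which is bounded by $(p-1)M_0$ via Lemma~\ref{lemCoef}(b). For (d), a uniform lower bound $|\det A_n|\ge c>0$ (from~\eqref{H2} and the product formula just used) combined with $\|A_n\|\le M_8$ from (a) forces $\sigma_{\min}(A_n)\ge c/M_8^{p-1}$, hence $\|A_n^{-1}\|\le M$ uniformly. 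For the product $\prod_{j=1}^n\|A_j^{-1}\|$, part (b) shows $\|A_j^{-1}\|\le(1-M_8 e^{-\eta j\log j})^{-1/2}$ for all large $j$, so $\sum_j\log\|A_j^{-1}\|$ converges absolutely, with only finitely many small-$j$ terms needing the coarser bound from the previous step.

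Finally, for (e) I would re-run the Jost-function estimate from the proof of Theorem~\ref{baxterMatrix}: the inequalities~\eqref{refer1} and~\eqref{refer2}, fed by the decay rate in (b), iterate to $\|u(z)\|\le K\exp(C|z|^{2/\eta})$ for large $|z|$, since the generating function $\sum_n w^n e^{-\eta n\log n}$ is entire of order $1/\eta$ and finite type. The relation~\eqref{funcA} transfers this growth to $\mathfrak{a}$, yielding $\log|\det\mathfrak{a}(w)|\le pC|w|^{2/\eta}+O(\log|w|)$ for large $|w|$, and a Jensen's formula argument on $\{|w|\le 2r\}$ produces $\nu_{\det\mathfrak{a}}(r)\le M_8 r^{2/\eta}$ for $r>1$. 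The principal obstacle throughout is \emph{uniformity}: every constant appearing in the proofs of Theorem~\ref{baxterMatrix}, Lemma~\ref{merom}, and the supporting lemmas of~\cite{K_merom} must be inspected to confirm that it depends only on $\fre,\eta,C,\gamma$ and not on the specific $\calJ\in\RR$. This bookkeeping, rather than any single new idea, is the main work.
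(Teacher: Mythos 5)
Your overall route coincides with the paper's: (a)--(b) via the Magic-Formula/Lipschitz argument, (c)--(d) via the determinant identity for $|A_n|$ together with Lemma~\ref{lemCoef}, and (e) via the Jost-function growth estimate and Jensen's formula. Two steps need repair.

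First, in (c) the identity you invoke is misstated. By \cite[Eq.~(11.47)]{DKS}, $\det|A_n|$ is a \emph{double} product, $\det|A_n|=\prod_{j=(n-1)p+1}^{np}\prod_{k=j}^{j+p-1}(a_k/a_k^{\circ})$, so each ratio enters $\sum_{j=1}^n\log\det|A_j|$ with multiplicity $p$ in the bulk; this is exactly why the factor $p$ sits in front of the second sum in \eqref{partC}, and the difference there reduces to at most $p(p-1)$ boundary terms, each uniformly bounded since $a_j\in[\gamma,M_0]$ and $a_j^{\circ}$ is uniformly bounded above and below on $\calT_\fre$. Your single-product version (each ratio appearing once) is false for $p\ge 2$, and the resulting claim that the left-hand side of \eqref{partC} ``collapses to $|(1-p)\sum_{k=1}^{np}\log(a_k/a_k^{\circ})|$'' does not hold; the bound you then derive is a correct bound for that (wrong) expression but is not a proof of \eqref{partC}.

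Second, in (e) the Jensen step is incomplete: Jensen's formula compares the zero count with $\frac{1}{2\pi}\int\log|f(Re^{i\theta})|\,d\theta-\log|f(z_0)|$, so you need a \emph{uniform lower} bound on the function at the base point, and applying Jensen directly to $\det\mathfrak{a}$ leaves you with no natural point where $\det\mathfrak{a}$ is uniformly bounded away from zero. The paper applies Jensen to $\det u$ instead, where $|\det u(0)|=\prod_j|\det A_j|^{-1}$ is uniformly controlled precisely by part (c) together with Lemma~\ref{lemCoef}(b), and then transfers the zero count to $\det\mathfrak{a}$ through the correspondence $z\mapsto z+z^{-1}$ coming from \eqref{funcA}. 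Relatedly, the uniformity of the constants in \eqref{refer1}--\eqref{refer2} is governed by $\sup_j\|A_j^{-1}\|$ and $\prod_j\|A_j^{-1}\|$, so part (d) is an essential input to (e), not only (b). Your treatment of (d) itself --- bounding $\prod_j\|A_j^{-1}\|$ via $\|A_j^{-1}\|\le(1-\|A_jA_j^*-\bdone\|)^{-1/2}$ for all $j$ past a uniform threshold --- is a clean alternative to the paper's estimate through $\|A_j\|^{p-1}/\det|A_j|$.
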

\begin{proof}
$||\Delta(\calJ)||$ can be easily uniformly bounded from above by using a bound on $||\calJ||$, see Lemma~\ref{lemCoef}(a). This proves part (a).

\smallskip

For (b) we need to reuse the arguments from~\cite[Thm~11.13(vi)$\Rightarrow$(i)]{DKS} or~\cite[Lemma~B.3]{K_merom}: each of the entries of $A_n-\bdone$ and $B_n$ is a polynomial function of $p$ consecutive pairs of $(a_j,b_j)$, and each of these functions (except for $B_1$) vanish when $\calJ\in\calT_\fre$ by the ``Magic Formula'' of Damanik--Killip--Simon~\cite{DKS}. Since each variable lies in a compact set ($a_j\in [\gamma,M_0], b_j\in[-M_0,M_0]$ by the previous lemma), we can uniformly bound each of the partial derivatives of these functions. Therefore the Lipschitz property gives us a uniform bound $||A_n-\bdone|| + ||B_n|| \le M_1 e^{-\eta n\log n}$. This means that starting from some $n\ge M_2$ (uniformly!), $||A_n-\bdone||\le 1/2$. This allows us to use~\cite[Prop 11.12]{DKS} which produces $||A_n A_n^* - \bdone || +||B_n|| \le M_3 (||A_n-\bdone||+||B_n||) \le M_1 M_3 e^{-\eta n\log n}$ for $n \ge M_2$. 
Values $||A_n A_n^* - \bdone || + ||B_n||$ for $1\le n < M_2$ can be easily incorporated into the estimate by using (a).

\smallskip

For (c), we use~\cite[Eq. (11.47)]{DKS} which states that
\begin{equation}\label{det|An|}
\det|A_n| = \prod_{j=(n-1)p+1}^{np} \prod_{k=j}^{j+p-1}  \Big(\frac{a_j}{a_j^{\circ}}\Big)
\end{equation}
to see that the difference in~\eqref{partC} is bounded above by $p(p-1) \sup_{j} \log|a_j|$, which can be uniformly bounded since $\gamma\le a_j\le M_0$.

\smallskip

Note that 
$||A_j^{-1}||$ is equal to the inverse of the minimal eigenvalue of $|A_j|$ which is $\le \frac{||A_j||^{p-1}}{\det|A_j|}$. Now, $\det|A_n|$  can be bounded below using~\eqref{det|An|}, and $\prod_{j=1}^n \det|A_j|$ can be bounded below by combining part (c) with Lemma~\ref{lemCoef}(b). An upper bound for $||A_n||$ was already established in (a). Finally, use Lemma~\ref{lemTech}(a) to get
$$
\prod_{j=1}^n ||A_j|| \le e^{\tfrac12 \sum_{j=1}^n \big| 1- ||A_j A_j^*|| \big|} \le e^{\tfrac12 \sum_{j=1}^n || \bdone - A_j A_j^*|| }.
$$
Using (b) and combining this all together proves (d).

\smallskip

Now let us reuse that arguments that lead us from~\eqref{superexp} to~\eqref{normU} (but now with $\eta$ instead of $\tfrac{2}{\rho+\veps}$). We need to justify why the constants $K_1$ and $K_2$ in~\eqref{refer1} and~\eqref{refer2} can be chosen uniformly. Comparing this with~\cite[Eq. (4.15)]{K_jost} and~\cite[Eq. (4.15)]{K_jost}, we see that this amounts to uniform upper bounds on $\prod_{j=1}^n ||A_j^{-1}||$ and $\sup_j ||A_j^{-1}||$, which is exactly what part (d) was for. Therefore~\eqref{normU} holds with a uniform constant $K_3$. Let us split the sum in~\eqref{normU} into $\sum_{j=1}^{n_0} + \sum_{j>n_0}$ with $n_0:=\lfloor 2^{1/\eta} |z|^{2/\eta} \rfloor$. If $n>n_0$ then $|z|^{2n} e^{-\eta n\log n} \le \big(\tfrac12\big)^n$. If $1\le n \le n_0$ then one can check that $|z|^{2n} e^{-\eta n\log n}$ is maximal when $n=\tfrac1e |z|^{2/\eta}$. Using these estimates we arrive at
\begin{multline*}
||u(z)|| \le K_3 |z|^2 (n_0 \max_{1\le n\le n_0}\{|z|^{2n} e^{-\eta n \log n} \} + 1) \\
 \le K_3 |z|^2 ( 2^{\frac1\eta} |z|^{\frac2\eta} e^{\frac{\eta}{e} |z|^{\frac2\eta}} +1 )
\le M_4 e^{\frac\eta2 |z|^{\frac2\eta}}
\end{multline*}
for a sufficiently large $M_4$.

This implies that $|\det u(z) | = \det |u(z) | \le ||u(z)||^p \le M^p_4 e^{p\frac{\eta}2 |z|^{\frac\eta2}}$. Also note that $|\det u(0) | = \prod_{j=1}^\infty |\det A^{-1}_j|$ (this follows, e.g., from the recurrence~\cite[Eq. (4.7)]{K_jost}). Therefore $-M_5 \le \log|\det u(0)| \le M_5$ from part (c) and Lemma~\ref{lemCoef}(b). Now we can use Jensen's theorem (see, e.g.,~\cite[Thm I.5.5]{bLevin} and~\cite[Lemma I.5.4]{bLevin}) to obtain
\begin{multline*}
\nu_{\det u}(r) \le \int_r^{er} \frac{\nu_{\det u}(t)}{t}dt \le \frac{1}{2\pi} \int_0^{2\pi} \log |\det u(er e^{i\theta})| d\theta - \log|\det u(0)| \\
\le p\log M_4 + \tfrac{p\eta}{2} (er)^{\frac2\eta} + M_5
\end{multline*}
for any $r$. Therefore for some uniform constant $M_6>0$, $\nu_{\det u}(r) \le M_6 r^{2/\eta}$ for all $r\ge 1$.

Now recall~\eqref{funcA} which implies $\det \mathfrak{a}(z+z^{-1}) = (2\pi)^p \det u(z) \det u(z^{-1})$. 
Observe that because of the trivial identity $(z-z_0)(z^{-1}-z_0) = -z_0(z+z^{-1}-z_0-z_0^{-1})$, for any $r>1$ there is a one-to-one correspondence (counting with multiplicities) between the zeros of $\det u(z)$ in the annulus $\{z\in\bbC: r^{-1} < |z| < r\}$ and the zeros of $\det \mathfrak{a}(z)$ in the ellipse $\{z+z^{-1}: |z|<r\}$. Since $r\bbD \subset \{z+z^{-1}: |z|<2r\}$ for large enough $r$, we get
$$
\nu_{\det \mathfrak{a}} (r) \le \sharp\{z: (2r)^{-1} <|z|< 2r, \det u(z) = 0 \} \le \nu_{\det u}(2r) \le M_7 r^{2/\eta}.
$$
%
\end{proof}

According to Theorem~\ref{sp}, for a $\calJ$ in $\frakR_{\eta,C,\gamma}$ its spectral measure is of the form~\eqref{meas} with $R=\infty$. Let $\{R_j\}_{j=1}^K$ ($K\le \infty$) be the resonances of $\calJ$, and $\{E_j\}_{j=1}^N$ ($N<\infty$) be its eigenvalues (eigenvalues oddly interlace with resonances on $\bbC$).

For the rest of the section let us fix the notation
\begin{align*}
U_{\delta_0}(z_0) &:=\{z: |z-z_0|<\delta_0\}, \\
W_{\delta_0}(\fre) &:=\{z: \real z \in \fre, -\delta_0\le \imag z\le \delta_0 \}, \\
U_{\delta_0}(\fre) &:=  \cup_{j=1}^p U_{\delta_0}(\alpha_j) \cup_{j=1}^p U_{\delta_0}(\beta_j) \cup W_{\delta_0}(\fre).
\end{align*}

\begin{lemma}\label{lemSp}
For any $\eta>2p,C>0,\gamma>0$ there exist constants $r_0>0$, $\delta_0>0$, $M_{15}>0$ such that for any matrix $\calJ=(a_n,b_n)_{n=1}^\infty$ from $\frakR_{\eta,C,\gamma}$, the following holds true:
\begin{itemize}
\item[(a)] $\sigma(\calJ)\subset \tfrac{r_0}{2}\bbD$, in particular $|E_j| < r_0/2$ for all $j$;
\item[(b)] The total number of eigenvalues $N\le M_{15}$;
\item[(c)] For any $r \ge r_0$,
\begin{equation*}\label{nu}
\nu_a (r) \le M_{15} r^{1-\tau};
\end{equation*}
\item[(d)] For any $r \ge r_0$,
\begin{equation*}
\sum_{j: |R_j|>r} \frac{1}{|R_j|} \le M_{15} r^{-\tau};
\end{equation*}
\item[(e)] There is at most one singularity $($i.e., zero of $a(z)$$)$ in each $U_{\delta_0}(\alpha_j)$ and at most one singularity in each $U_{\delta_0}(\beta_j)$, $j=1,\ldots,p$;
\item[(f)] There are no singularities in $W_{\delta_0}(\fre)$;
\item[(g)] $|E_j - E_k| \ge \delta_0$ for every $j\ne k$; $|E_j-R_k|\ge \delta_0$ for every $j,k$.
\end{itemize}
\end{lemma}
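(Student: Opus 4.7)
The plan is to split Lemma~\ref{lemSp} into two groups: parts (a)--(d) are quantitative bounds on $\calJ$ and its spectral data that follow directly from Lemmas~\ref{lemCoef},~\ref{lemMV} together with the scalar-block correspondence from Section~\ref{sMatrix}, while parts (e)--(g) are separation/simplicity statements which I will prove by a compactness-plus-Hurwitz argument.

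For (a), Lemma~\ref{lemCoef}(a) already gives $\|\calJ\|\le 3M_0$ uniformly, so I take $r_0:=6M_0$. For (c), I would transfer the matrix bound of Lemma~\ref{lemMV}(e) to the scalar $a_\calJ$ by means of the identities $(m-m^\sharp)(z)=2\pi\sqrt{r(z)}/a(z)$ and $(\mathfrak{m}_\Delta-\mathfrak{m}^\sharp_\Delta)(\lambda)=2\pi\mathfrak{a}(\lambda)^{-1}\sqrt{\lambda^2-4}$ (exactly as used in Theorem~\ref{baxterSuper}): since $\Delta:\bbC\to\bbC$ is a $p$-to-$1$ branched covering with $|\Delta(z)|\lesssim r^p$ for $|z|\le r$, each zero of $a_\calJ$ in $r\bbD$ projects to a zero of $\det\mathfrak{a}_\calJ$ in a disk of radius $O(r^p)$, yielding $\nu_a(r)\le p\,\nu_{\det\mathfrak{a}}(Cr^p)\le pM_8 C^{2/\eta}r^{2p/\eta}=M_{15}\,r^{1-\tau}$. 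Part (b) is then immediate since eigenvalues are real zeros of $a$ lying in $\sigma(\calJ)\subseteq (r_0/2)\bbD$, so $N\le\nu_a(r_0/2)$. For (d), since $r\ge r_0$ forces all zeros of $a$ outside $r\bbD$ to be resonances, Abel summation gives
\[
\sum_{|R_j|>r}\frac{1}{|R_j|}\le\int_r^\infty\frac{\nu_a(t)}{t^2}\,dt\le\frac{M_{15}}{\tau}\,r^{-\tau}.
\]

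For (e)--(g), the plan rests on two facts: $\frakR_{\eta,C,\gamma}$ is compact in $\ell^\infty(\bbN)^2$, and the map $\calJ\mapsto a_\calJ$ is continuous into $H(\bbC)$ with the topology of uniform convergence on compacts. Compactness follows since each $(a_n,b_n)$ lies in the compact box $[\gamma,M_0]\times[-M_0,M_0]$ (Lemma~\ref{lemCoef}(a)), the periodic parts $(a_n^\circ,b_n^\circ)$ range over the compact torus $\calT_\fre$, and the uniform decay~\eqref{H1} lifts coordinatewise convergence to $\ell^\infty$ convergence along subsequences. Continuity of $\calJ\mapsto a_\calJ$ uses that $\ell^\infty$ convergence of coefficients yields operator-norm convergence of $\calJ$, hence resolvent convergence, hence $m_{\calJ_n}\to m_{\calJ_*}$ uniformly on compacts of $\bbC\setminus\sigma(\calJ_*)$; the uniform growth bound $|a_\calJ(z)|\le \exp(C_1|z|^{1-\tau})$ from Theorem~\ref{baxterSuper} (applied uniformly over $\frakR_{\eta,C,\gamma}$ thanks to~\eqref{H1}) makes $\{a_\calJ\}_{\calJ\in\frakR_{\eta,C,\gamma}}$ a normal family, and the identity $a_\calJ(\pi(z))(m_\calJ-m_\calJ^\sharp)(z)=2\pi\sqrt{\tilde{r}(z)}$ pins down the unique limit.

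With these two ingredients in hand, I would argue (e)--(g) by contradiction. Suppose some assertion among (e)--(g) fails for every $\delta>0$; pick witnesses $\calJ_n$ with $\delta=1/n$, pass to a convergent subsequence $\calJ_n\to\calJ_*\in\frakR_{\eta,C,\gamma}$, and apply Hurwitz's theorem to $a_{\calJ_n}\to a_{\calJ_*}$. For (e), two zeros of $a_n$ in $U_{1/n}(\alpha_j)$ collapse into a zero of $a_*$ at $\alpha_j$ of multiplicity at least $2$, contradicting the at-most-simple-zero property at band edges derived in the proof of Theorem~\ref{sp} from $(M_b)$ and $(M_c)$. For (f), a zero of $a_n$ in $W_{1/n}(\fre)$ that stays away from the edges (a case permitted by (e), already established) converges to a point of $\Int(\fre)$, where $m_*$ is forbidden a pole by $(M_b)$. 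For (g), merging eigenvalues $E_{j,n},E_{k,n}$ produce a double eigenvalue of $\calJ_*$, impossible for Jacobi operators; a merging eigenvalue-resonance pair whose limit lies off $\fre$ yields simultaneous poles of $m_*$ on both sheets above that projection, violating $(M_d)$; and convergence to a band edge produces a multi-zero of $a_*$ there, again excluded. The main technical obstacle will be to establish the continuity of $\calJ\mapsto a_\calJ$ in a form strong enough for Hurwitz -- since $a_\calJ$ arises as a meromorphic continuation of the $m$-function through the bands of $\fre$, pointwise control of $m_\calJ$ on $\bbC\setminus\fre$ is not by itself enough -- and this is precisely where the uniform exponential-order bound of Theorem~\ref{baxterSuper} and the normal-families argument become indispensable.
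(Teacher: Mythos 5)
Your treatment of (a)--(d) is essentially the paper's own: (a) from Lemma~\ref{lemCoef}(a), (c) by transporting Lemma~\ref{lemMV}(e) through the zero correspondence between $a$ and $\det\mathfrak{a}$ (the paper uses the exact identity $\det\mathfrak{a}(\lambda)=c\prod_j a(f_j(\lambda))$ from \eqref{AfrakA}, which makes the correspondence bijective, so your factor $p$ is a harmless overestimate), (b) as a special case of (c), and (d) by partial summation. For (e)--(g), however, you take a genuinely different route (compactness of $\RR$ plus Hurwitz, versus the paper's quantitative argument via the Case $C_0$ sum rule, the Hadamard factorization, and the contour estimates \eqref{contourInt1}--\eqref{contourInt3}), and as it stands your route has two real gaps.

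First, the normality of $\{a_\calJ\}_{\calJ\in\RR}$, which you correctly identify as the crux, is not supplied by Theorem~\ref{baxterSuper}: that theorem is a growth-\emph{order} statement for a single fixed operator and carries no uniform constant over the class. The uniform bound is exactly what Lemma~\ref{lemMV}(e) labors to produce, and it produces it for $\|u(z)\|$ and hence for $|\det\mathfrak{a}|$; passing from an upper bound on $|\det\mathfrak{a}(\lambda)|=c\prod_j|a(f_j(\lambda))|$ to an upper bound on a single factor $|a(f_l(\lambda))|$ requires lower bounds on the remaining factors, which is circular. Without a uniform local bound on $|a_\calJ|$ (and a guarantee that the locally uniform limit is $a_{\calJ_*}\not\equiv 0$), Hurwitz cannot be invoked. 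Second, and independently, the contradiction you draw in (g) is not valid: the locally uniform limit of eigenvalues $E_{j,n},E_{k,n}\to E_*$ need not be an eigenvalue of $\calJ_*$, because the eigenweights may tend to $0$; a multiple \emph{anti-bound state} of $a_{\calJ_*}$ at a real point off $\fre$ is perfectly admissible under Theorem~\ref{sp} (only eigenvalues and band-edge zeros are forced to be simple), so neither ``double eigenvalue'' nor a violation of $(M_d)$ follows. To close this one must show the weights cannot vanish in the limit, e.g.\ via \eqref{weights}: merging zeros force $a_n'(E_{j,n})\to 0$ (using a Cauchy-estimate bound on the cofactor, which again needs the uniform local bound on $a_n$), hence $w_{j,n}\to\infty$ unless $E_*$ is a band edge, contradicting $w_{j,n}\le 1$; the band-edge case then reduces to (e). The paper avoids all of this by extracting from the sum rule the uniform entropy bound \eqref{c0rule}, observing via \eqref{contourInt1}--\eqref{contourInt3} that clustering of zeros near $\partial\fre$ or $\Int(\fre)$ would make $\int_\fre\sqrt{|r|}\,|x-\zeta_1|^{-1}|x-\zeta_2|^{-1}dx$ blow up like $\delta^{-1/2}$, and for (g) bounding the normalization constant $A$ from below by the single eigenweight term in \eqref{normalization1}; this yields explicit uniform constants, which is what the stability theorem ultimately needs.
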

\begin{remarks}
1. Parts (c) and (d) are well-known for functions $a$ of exponential order smaller than $1$. We have to go carefully through all these lengths to make sure that the constant $M_{15}$ is uniform which is a non-trivial fact.

2. By shrinking $\delta_0$ if necessary we may assume that $\delta_0<\min\{\frac{\alpha_{j+1}-\beta_j}{2},\frac{\beta_j-\alpha_j}{2}\}$.
\end{remarks}
\begin{proof}
Part (a) follows from Lemma~\ref{lemCoef}(a).

\smallskip

Part (b) is a special case of part (c) since $N \le \nu_a(r_0/2) \le \nu_a(r_0)$ by part (a). 

\smallskip

From~\cite[Lemma 4.7]{K_merom} or~\cite[Prop 11.3]{DKS} we know that
\begin{equation}\label{AfrakA}
\det\mathfrak{a}(\lambda) = c \prod_{j=1}^p a(f_j(\lambda)), \quad c=\prod_{j=1}^p a_j^{2(p-j)} [a_j^{\circ}]^p,
\end{equation}
where, as before, $f_1,\ldots,f_p$ are the $p$ inverse functions of $\Delta(z)$.
This establishes a one-to-one correspondence between the zeros of $\mathfrak{a}$ in $r\bbD$ and the zeros of $a$ in $\{z: |\Delta(z)|<r\}$ (counting with multiplicities). Indeed, if $z_0$ is a zero of $a$ of order $k$, $a(z)=(z-z_0)^k a_0(z)$, then the right-hand side of~\eqref{AfrakA} obtains the factor $\prod_{j=1}^p (f_j(\lambda)-z_0)^k = \operatorname{const} \times (\lambda - \Delta(z_0))^k$. Conversely, if $\lambda_0$ is a zero of $\det\mathfrak{a}$, then~\eqref{AfrakA} $f_j(\lambda_0)$ is a zero of $a$ for some $j=1,\ldots,p$.

Since $r \bbD \subset \{z: |\Delta(z)|<M_{9} r^p \}$ for all large enough $r>r_0$, we can conclude
$$
\nu_a(r) \le \nu_{\det\mathfrak{a}}(M_{9} r^p) \le M_{10} r^{\frac{2p}{\eta}} = M_{10} r^{1-\tau}.
$$

\smallskip

To prove (d), we use integration by parts and part (c) to get the estimate
\begin{multline}\label{sum1/R}
\sum_{j: |R_j|>r} \frac{1}{|R_j|} = \int_r^\infty \frac{d\nu_a(t)}{t} = -\frac{\nu_a(r)}{r} + \int_r^\infty \frac{\nu_a(t)}{t^2} dt
\\
\le M_{10}\int_r^\infty t^{-1-\tau} dt = \tfrac{M_{10}}{\tau} r^{-\tau}.
\end{multline}

\smallskip

To obtain (e), (f), and (g), let us remind to the reader the so-called Case $C_0$ sum rule for MOPRL, see~\cite[Thm 10.2]{DKS}. 
It states that whenever any two out of the three quantities $\mathcal{Z},\mathcal{E}_0,\mathcal{A}_0$ (defined below) are finite, then all of them are finite and
$$
\mathcal{Z}(\mu_\Delta)-\mathcal{E}_0(\mu_\Delta) = \mathcal{A}_0(\Delta(\calJ)),
$$
where
\begin{align*}
\mathcal{Z}(\mu_\Delta) & = \tfrac{1}{2\pi} \int_{-2}^2 (4-x^2)^{-1/2} \log\det\left[ \tfrac{1}{2\pi} \mathfrak{a}(x) \right]^{-1} dx; \\
\mathcal{E}_0(\mu_\Delta) & = \sum_{j=1}^N \log |\beta_j|, \quad \beta_j+\beta_j^{-1}=\Delta(E_j), |\beta_j|>1;  \\
\mathcal{A}_0(\Delta(\calJ)) & = - \sum_{j=1}^\infty \log \det|A_j|
\end{align*}
(we recomputed these to fit our notation; we remark that $E_j$'s here are the eigenvalues of $\calJ$, which means $\Delta(E_j)$ are the eigenvalues of $\Delta(\calJ)$).

By Lemma~\ref{lemMV}(c) and Lemma~\ref{lemCoef}(b), $\mathcal{A}_0(\Delta(\calJ))$ can be uniformly bounded above and below, and by parts (a) and (b) of the current lemma, so can be $\mathcal{E}_0(\mu_\Delta)$. Therefore $\mathcal{Z}(\mu_\Delta) \ge -M_{11}$ is uniformly bounded below. By making the change of variables $x=\Delta(y)$ in  $\mathcal{Z}(\mu_\Delta)$ and using~\eqref{AfrakA}, we obtain
$$
-M_{11} \le \mathcal{Z}(\mu_\Delta) = -\tfrac{1}{2}\log\tfrac{c}{(2\pi)^p} - \tfrac{1}{2\pi} \int_\fre (4-\Delta(y)^2)^{-1/2} \log a(y) |\Delta'(y)| dy.
$$
Note that $4-\Delta(y)^2 = -\tfrac{1}{[a_1^{\circ}\ldots a_p^{\circ}]^p} r(y)$, where $r$ is~\eqref{r} (this follows by noting that both polynomials have the same zeros and the same leading terms).
Using the upper and lower estimates on $a_j$ and $a_j^{\circ}$, we arrive at
\begin{equation}\label{c0rule}
 \int_\fre |r(y)|^{-\frac12} |\Delta'(y)| \log a(y) dy \le  M_{12}.
\end{equation}
Note that $a$ is entire of growth order $<1$, so by the Hadamard factorization, $| a(z) | = A \prod_{j=1}^{N_1} |z-\zeta_j| \prod_{j=1}^{N_2} |1-\frac{z}{\xi_j}|$, where we denoted $\{\zeta_j\}_{j=1}^{N_1}$ ($N_1<\infty$) to be the singularities in $r_0 \bbD$ (repeated according to their multiplicities), and by $\{\xi_j\}_{j=1}^{N_2}$ ($N_2\le \infty$) the rest. Here the constant $A$ is equal to
\begin{equation}\label{normalization1}
A = \int_\fre \frac{\sqrt{|r(x)|}}{\prod_{j=1}^{N_1} |x-\zeta_j| \prod_{j=1}^{N_2} |1-\frac{x}{\xi_j}| } dx + \sum_{k: \zeta_k\in\sigma(\calJ)} 2\pi \frac{\sqrt{|r(\zeta_k)|}}{\prod_{j\ne k} |\zeta_k-\zeta_j| \prod_{j=1}^{N_2} |1-\frac{\zeta_k}{\xi_j}| }.
\end{equation}
We assume that $N_1 \ge 2$ since otherwise (e), (f), (g) are trivial. Let us denote
$$
d\vartheta (y):= |r(y)|^{-\frac12} |\Delta'(y)| dy,
$$
which is a finite measure on $\fre$. Then~\eqref{c0rule} produces
\begin{multline}\label{c0rule2}
M_{12}\ge  \int_\fre \log a(y) \, d\vartheta (y) \ge \log A \int_\fre d\vartheta (y)  \\ 
 + N_1 \inf_{\zeta\in r_0\bbD} \int_\fre  \log |y-\zeta| d\vartheta (y) +  \Big(\sum_{j=1}^{N_2} \log \inf_{x\in\fre}  |1-\tfrac{x}{\xi_j}|\Big) \int_\fre d\vartheta (y).
\end{multline}
Also,
\begin{equation*}\label{c0rule3}
\log A \ge  \log \int_\fre \frac{\sqrt{|r(x)|}}{|(x-\zeta_1)(x-\zeta_2)| } dx - (N_1-2) \log(2r_0) - \sum_{j=1}^{N_2} \log \sup_{x\in\fre}  |1-\tfrac{x}{\xi_j}|.
\end{equation*}
Using part (a), Lemma~\ref{lemTech}(a), and~\eqref{sum1/R}, we obtain
\begin{align}
\label{sumlogs1} \sum_{j=1}^{N_2} \log \inf_{x\in\fre}  |1-\tfrac{x}{\xi_j}| & 
\ge -\sum_{j=1}^{N_2} \frac{r_0}{2|\xi_j|(1-\tfrac{r_0}{2|\xi_j|})} \ge -\sum_{j=1}^{N_2} \frac{r_0}{|\xi_j|} \ge -r_0 \frac{M_{10}r_0^{-\tau}}{\tau};\\
\label{sumlogs2} \sum_{j=1}^{N_2} \log \sup_{x\in\fre}  |1-\tfrac{x}{\xi_j}| & \le \sum_{j=1}^{N_2} \frac{r_0}{2|\xi_j|} \le \frac{r_0}{2} \frac{M_{10}r_0^{-\tau}}{\tau}.
\end{align}

Combining all of this together, while also noting that $\inf_{\zeta\in r_0\bbD} \int_\fre  \log |y-\zeta| d\vartheta (y)$  is finite for any $r_0$, we get
\begin{equation}\label{closeToEdges}
\log \int_\fre \frac{\sqrt{|r(x)|}}{|(x-\zeta_1)(x-\zeta_2)| } dx \le  M_{13}
\end{equation}
for any choice of two zeros $\zeta_1, \zeta_2$ of $a$ in $r_0 \bbD$. We claim that (e) and (f) follow from this. Indeed, using elementary contour integration and some complex analysis, one obtains that for small enough $\delta>0$,
\begin{align}
\label{contourInt1} \int_\fre \frac{\sqrt{|r(x)|}}{|x-(\alpha_j-\delta)|^2 } dx & \ge c_1+c_2\frac{1}{\sqrt{\delta}}, \quad 1\le j \le p ; \\
\label{contourInt2} \int_\fre \frac{\sqrt{|r(x)|}}{|x-(\beta_j+\delta)|^2 } dx & \ge c_1+c_2\frac{1}{\sqrt{\delta}}, \quad 1\le j \le p ; \\
\label{contourInt3} \int_\fre \frac{\sqrt{|r(x)|}}{|x-x_0|^2 +\delta^2} dx & \ge c_1+c_2\frac{1}{\sqrt{\delta}} , \quad x_0\in \cup_{j=1}^p (\alpha_j-\delta,\beta_j+\delta),
\end{align}
where constants $c_1\in\bbR$, $c_2>0$ are independent of $x_0$ (though they can depend on $\fre$). Estimates~\eqref{contourInt1},~\eqref{contourInt2} together with an upper bound on the left-hand side of~\eqref{closeToEdges} show that we can choose $\delta_1>0$ small enough so that no two zeros of $a$ can be in the same $\delta_1$-neighborhood of an endpoint of $\fre$. Similarly, since zeros of $a$ come in complex-conjugate pairs,~\eqref{contourInt3} shows that  there can be no zeros in $W_{\delta_1}(\fre)$ for the chosen $\delta_1$. This proves (e) and (f).

\smallskip

We are left with proving (g). Choose any eigenvalue $E_0$ of $\calJ$. Note that $|E_0|\le r_0/2$ by (a), so $E_0 = \zeta_j$ for some $1\le j\le N_1$. Suppose $\zeta_1\ne E_0$. 
Let us reuse the estimate~\eqref{c0rule2} where we bound the normalization constant $A$~\eqref{normalization1} as
\begin{align*}
\log A &\ge \log \frac{  2\pi \sqrt{|r(E_0)|}}{\prod_{j: \zeta_j \ne E_0} |E_0-\zeta_j| \prod_{j=1}^{N_2} |1-\frac{E_0}{\xi_j}| }
 \\
 &\ge \log \frac{  2\pi \sqrt{|r(E_0)|}}{ |E_0-\zeta_1| } - (N_1-2) \log(2r_0) - \sum_{j=1}^{N_2} \log \sup_{x\in\tfrac{r_0}{2}\bbD}  |1-\tfrac{x}{\xi_j}|.
\end{align*}
Plugging this into~\eqref{c0rule2} and repeating~\eqref{sumlogs2}, we get
$$
\log \frac{ \sqrt{|r(E_0)|}}{ |E_0-\zeta_1| } \le M_{14}.
$$
This shows that if $E_0\notin  U_{\delta_1/2}(\fre)$ then
$$
|E_0-\zeta_1| \ge e^{-M_{14}} \min_{z\in \tfrac{r_0}{2}\bbD\setminus U_{\delta_1/2}(\fre)} \sqrt{|r(z)|},
$$
and if $E_0\in U_{\delta_1/2}(\fre)$, then $|E_0-\zeta_1| > \delta_1/2$ by (e) and (f). Now we can take $\delta_0$ to be $\min\{\delta_1/2, e^{-M_{14}} \min_{z\in \tfrac{r_0}{2}\bbD\setminus U_{\delta_1/2}(\fre)} \sqrt{|r(z)|}\}$.
\end{proof}

Let us now define $\veps_0$ and $R_0$ of our Theorem~\ref{thmStability} to be $\delta_0/2$ and $r_0$ (defined in Lemma~\ref{lemSp}), respectively, and choose any $0<\veps<\veps_0$ and $R>R_0$.

Let $\mu$ and $\tilde{\mu}$ be the spectral measures of $\calJ$ and $\tilde\calJ$ from Theorem~\ref{thmStability}. Let $m_n = \int x^n d\mu(x)$ and $\tilde{m}_n = \int x^n d\tilde{\mu}(x)$ be the $n$-th moments of these measures. 

Singularities of $\calJ$ and $\tilde\calJ$ near the edges will have to be treated with a special care, so let us  change their labeling. Divide all the singularities $\{R_j\}_{j=1}^K \cup \{E_j\}_{j=1}^N$ of $\calJ$ (repeated according to the multiplicities) into those that belong to $U_{\delta_0}$, labeled as $\{\phi_j\}_{j=1}^{N_0}$ ($N_0 \le 2p$ by Lemma~\ref{lemSp}(e) and~(f)); those that belong to $r_0 \bbD \setminus U_{\delta_0}$, labeled as $\{\psi_j\}_{j=1}^{N_1}$ ($N_1\le M_{15} r_0^{1-\tau}$ by Lemma~\ref{lemSp}(c)); and those that belong to $\bbC\setminus r_0\bbD$, labeled as $\{\xi_j\}_{j=1}^{N_2}$ ($N_2 \le \infty$). Let us also define $\sigma_j = 1$ if $\phi_j$ is en eigenvalue and $\sigma_j=0$ otherwise ($1\le j\le N_0$), and $\varsigma_j = 1$ if $\psi_j$ is an eigenvalue and $\varsigma_j=0$ otherwise ($1\le j\le N_1$).

By Theorem~\ref{baxterSuper} and ~\eqref{H1}, the function $a(z)$ is entire of exponential growth $<1$, so by the Hadamard factorization theorem,
$$
| a(z) | = A \prod_{j=1}^{N_0} |z-\phi_j| \prod_{j=1}^{N_1} |z-\psi_j| \prod_{j=1}^{N_2} |1-\frac{z}{\xi_j}|.
$$
By Theorem~\ref{sp}, the spectral measure of $\calJ$ is therefore
$$
d\mu(x) = \frac{\sqrt{|r(x)|}}{ |a(x)|}1_{x\in\fre} dx + \sum_{k=1}^{N_0} \sigma_k \mu_k \delta_{\phi_k} + \sum_{k=1}^{N_1} \varsigma_k w_k \delta_{\psi_k},
$$
where
\begin{align*}
\mu_k & = \frac{2\pi \sqrt{r(\phi_k)} }{ A \prod_{j\ne k} |\phi_k-\phi_j| \prod_{j=1}^{N_1} |\phi_k-\psi_j| \prod_{j=1}^{N_2} |1-\frac{\phi_k}{\xi_j}| } ,\\
w_k & = \frac{2\pi \sqrt{r(\psi_k)} }{ A \prod_{j=1}^{N_0} |\psi_k-\phi_j| \prod_{j \ne k} |\psi_k-\psi_j| \prod_{j=1}^{N_2} |1-\frac{\psi_k}{\xi_j}|   }.
\end{align*}
Let us also define
\begin{equation*}
\breve{\mu}_k : = A \, \mu_k, \quad \breve{w}_k := A \, w_k.
\end{equation*}
Finally note that $A>0$ is the normalization constant that makes $\mu$ into a probability measure, which means
\begin{equation}\label{constA}
A= \int_\fre \frac{\sqrt{|r(x)|}}{ \prod_{j=1}^{N_0} |x-\phi_j| \prod_{j=1}^{N_1} |x-\psi_j| \prod_{j=1}^{N_2} |1-\frac{x}{\xi_j}| } dx + \sum_{k=1}^{N_0} \sigma_k \breve{\mu}_k  + \sum_{k=1}^{N_1} \varsigma_k \breve{w}_k.
\end{equation}

The corresponding quantities for $\tilde\calJ$  (see Theorem~\ref{thmStability})  we will denote by the same letters but with the symbol $\tilde{}\,$. In particular, its singularities will be denoted by $\{\tilde\phi_j\}_{j=1}^{N_0}$, $\{\tilde\psi_j\}_{j=1}^{N_1}$, $\{\tilde\xi_j\}_{j=1}^{\tilde N_2}$. By the conditions of Theorem~\ref{thmStability}, $|\phi_j - \tilde\phi_j|<\veps$ for each $j$, $|\psi_j - \tilde\psi_j|<\veps$ for each $j$, and $|\xi_j - \tilde\xi_j|<\veps$ for those $j$ for which $|\xi_j|\le R$. As a result,  in  general, $\tilde\phi_j\in U_{\delta_0+\veps}(\fre)$ and $\tilde{\psi}_j\in (r_0+\veps)\bbD \setminus U_{\delta_0-\veps}$. Note that $N_0, N_1, \sigma_j, \varsigma_j$ are common for $\calJ$ and $\tilde\calJ$, but $N_2$ and $\tilde{N_2}$ may differ.

\begin{lemma}\label{lemStabNo1}
Suppose that $\phi_j = \tilde{\phi}_j$ for all $1\le j \le N_0$. Then under the conditions of Theorem~\ref{thmStability},
$$
|m_n-\tilde{m}_n| \le M_{16} r_0^n (\veps+R^{-\tau}).
$$
\end{lemma}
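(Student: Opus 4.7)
The proof decomposes $m_n-\tilde m_n$ into the three natural contributions of the spectral measure — the absolutely continuous density on $\fre$, the point masses at $\{\phi_k\}$, and the point masses at $\{\psi_k\}$ — and bounds each separately. Both supports sit in $r_0\bbD$ (up to a harmless $\veps$-collar), so $|x^n|\le r_0^n$ on the support (after enlarging $r_0$ slightly to absorb polynomial factors of $n$ such as $n r_0^{n-1}$, using $r_0>1$). It thus suffices to show that each of the three pieces, after stripping the universal $r_0^n$, is $O(\veps+R^{-\tau})$ uniformly in $\calJ,\tilde\calJ\in\frakR_{\eta,C,\gamma}$.

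The central computation is the comparison of the Hadamard products
\[
\rho(x)=\prod_{j=1}^{N_0}|x-\phi_j|\prod_{j=1}^{N_1}|x-\psi_j|\prod_{j=1}^{N_2}\bigl|1-\tfrac{x}{\xi_j}\bigr|
\]
and its analog $\tilde\rho(x)$. Taking logarithms, the $\phi$-contribution to $\log(\rho/\tilde\rho)$ vanishes by the hypothesis $\phi_k=\tilde\phi_k$. The $\psi$-contribution has only $N_1\le M_{15}r_0^{1-\tau}$ (uniformly finitely many) summands, each of size $O(\veps/\delta_0)$ since $|\psi_j-\tilde\psi_j|<\veps$ while $|x-\psi_j|,|x-\tilde\psi_j|\ge \delta_0/2$ by Lemma~\ref{lemSp}(e)--(g); total $O(\veps)$. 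The $\xi$-contribution splits at $|\xi_j|\le R$ and $|\xi_j|>R$. For the bounded range, a Taylor expansion gives a per-term bound $O(|x|\veps/|\xi_j|^2)$, summable via $\sum 1/|\xi_j|^2\le(1/r_0)\sum 1/|\xi_j|\lesssim r_0^{-1-\tau}$ from Lemma~\ref{lemSp}(d), contributing $O(\veps)$. For the tail $|\xi_j|>R$ (and separately $|\tilde\xi_j|>R-\veps$), each log is $O(|x|/|\xi_j|)$ and Lemma~\ref{lemSp}(d) with $r=R$ gives a total of $O(R^{-\tau})$. Assembling, $\rho(x)/\tilde\rho(x)=1+O(\veps+R^{-\tau})$ uniformly on $\fre\cup\{\phi_k\}\cup\{\psi_k\}$.

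This pointwise estimate bootstraps in three steps. First, formula~\eqref{constA} for $A$ and $\tilde A$ allows a term-by-term comparison yielding $A=\tilde A(1+O(\veps+R^{-\tau}))$, provided $A,\tilde A\ge c>0$ uniformly — which holds because $\rho$ is bounded above on $\fre$ (the mirror of the computation above, using that zeros of $\rho$ stay away from $\fre$ by Lemma~\ref{lemSp}(e)--(g)). The a.c. part of $m_n-\tilde m_n$ is then bounded by $\sup_{x\in\fre}|C(x)/(A\rho(x))-C(x)/(\tilde A\tilde\rho(x))|\cdot r_0^n\cdot\operatorname{length}(\fre)=O(r_0^n(\veps+R^{-\tau}))$. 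Second, for the weights $\mu_k$ at $\phi_k=\tilde\phi_k$, the explicit formula reduces to the same comparison applied to $\rho^{(k)}(\phi_k)/\tilde\rho^{(k)}(\phi_k)$ (the product $\rho$ with the $(x-\phi_k)$-factor omitted); the denominator is uniformly bounded away from zero by Lemma~\ref{lemSp}(e)--(g). Third, for the weights $w_k$ at $\psi_k$ one writes
\[
w_k\psi_k^n-\tilde w_k\tilde\psi_k^n=(w_k-\tilde w_k)\psi_k^n+\tilde w_k(\psi_k^n-\tilde\psi_k^n),
\]
handles $w_k-\tilde w_k$ by the same method, and uses $|\psi_k^n-\tilde\psi_k^n|\le n r_0^{n-1}\veps\le Cr_0^n\veps$ for the position shift; the sum over $k$ is bounded because $N_1$ is.

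The main obstacle is not conceptual but organizational: every constant must be uniform over $\frakR_{\eta,C,\gamma}$. This is exactly what Lemmas~\ref{lemCoef},~\ref{lemMV},~\ref{lemSp} were designed to deliver — the zero-counting bound in~\ref{lemSp}(c) keeps $N_1$ uniformly finite, the tail estimate~\ref{lemSp}(d) supplies both the $|\xi_j|\le R$ and $|\xi_j|>R$ resonance sums, and the separation estimates~\ref{lemSp}(e)--(g) keep every denominator appearing in the weight formulas uniformly bounded away from zero. With these in hand, the proof is a careful bookkeeping exercise of the kind isolated in Lemma~\ref{lemTech}.
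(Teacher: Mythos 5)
Your proposal is correct and follows essentially the same route as the paper: the same three-way decomposition into the a.c.\ part, the $\phi$-masses, and the $\psi$-masses, and the same factor-by-factor comparison of the Hadamard products, with the $\psi$-terms controlled by the uniform bound on $N_1$ and the $\delta_0$-separation from Lemma~\ref{lemSp}(e)--(g), and the $\xi$-terms by Lemma~\ref{lemSp}(d) split at $|\xi_j|\le R$. The only organizational difference is that the paper first bounds $\bigl|\tilde m_n - (A/\tilde A)\,m_n\bigr|$, so the normalization constants cancel and the a.c.\ integral is absorbed into $\tilde m_0=1$ without any lower bound on $\tilde A$, and then recovers $|m_n-\tilde m_n|$ from the $n=0$ case; you instead compare $A$ and $\tilde A$ directly via a uniform lower bound (which is valid, and is in fact the device the paper itself uses in Lemma~\ref{lemStabNo2}).
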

\begin{proof}
First of all note that if we can show that
\begin{equation}\label{eqStab1}
\Big| \tilde{m}_n - \frac{A}{\tilde{A}} m_n  \Big| \le \tfrac12 M_{16} r_0^n (\veps+R^{-\tau}),
\end{equation}
then we will immediately obtain
\begin{equation*}
|m_n-\tilde m_n|\le  |m_n| \Big| \tilde{m}_0 - \frac{A}{\tilde{A}} m_0  \Big| + \Big|\tilde{m}_n -  \frac{A}{\tilde{A}} m_n  \Big| \le M_{16} r_0^n (\veps+R^{-\tau}).
\end{equation*}

The left-hand side of~\eqref{eqStab1} can be seen to be equal to
\begin{align}
\label{eqStab2} \Big| \int_\fre \frac{x^n \sqrt{|r(x)|}}{|\tilde{a}(x)|}   \Big( 1 - \prod_{j=1}^{N_1}  \frac{|x-\tilde\psi_j|}{|x-\psi_j|} \prod_{|\xi_j|\le R} & \frac{|1-\frac{x}{\tilde\xi_j}|}{|1-\frac{x}{\xi_j}|} \prod_{|\tilde\xi_j|>R} \Big|1-\frac{x}{\tilde\xi_j}\Big| \prod_{|\xi_j|>R} \frac{1}{|1-\frac{x}{\xi_j}|} \Big) dx \Big.\\
\label{eqStab3}  & \Big. + \sum_{k=1}^{N_0} \sigma_k \tilde\mu_k \phi_k^n \Big(1 - \frac{\breve\mu_k}{\tilde{\breve\mu}_k}\Big) \Big.\\
\label{eqStab4}  & \Big. + \sum_{k=1}^{N_1} \varsigma_k \tilde{w}_k \Big(\tilde\psi_k^n - \frac{\breve{w}_k}{\tilde{\breve{w}}_k} \psi_k^n\Big)
\Big|
\end{align}

In the following computations let us write the shorthand $\exp(\ldots)$ to denote the exponential of the expression in the parentheses that immediately precede the exponential.

We use Lemma~\ref{lemSp}(a),(c),(d) and Lemma~\ref{lemTech}(b) to estimate~\eqref{eqStab2} in the absolute value as
\begin{align*}
\left| \tfrac{r_0}{2}\right|^n  & \sup_{x\in\fre} \left( \sum_{j=1}^{N_1} \Big| 1-\frac{x-\tilde\psi_j}{x-\psi_j} \Big| +  \sum_{|\xi_j|\le R} \Big| 1-  \frac{1-\frac{x}{\tilde\xi_j}}{1-\frac{x}{\xi_j}} \Big| +  \sum_{|\tilde\xi_j|>R} \frac{|x|}{|\tilde\xi_j|} \right. \\
& \qquad \qquad \qquad \qquad \qquad \qquad \qquad \qquad \left. + \sum_{|\xi_j|>R} \frac{|x|}{|\xi_j| \, |1-\tfrac{x}{\xi_j}|}   \right) \exp(\ldots) \\
&\le \left| \tfrac{r_0}{2}\right|^n \left( N_1 \frac{\veps}{\delta_0} + \veps \sum_{|\xi_j|\le R} \frac{1}{|\tilde\xi_j|} + \tfrac{r_0}{2} \sum_{|\tilde\xi_j|> R} \frac{1}{|\tilde\xi_j|} + r_0  \sum_{|\xi_j|> R} \frac{1}{|\xi_j|}  \right) \exp(\ldots)\\
&\le \left| \tfrac{r_0}{2}\right|^n \left( M_{15} r_0^{1-\tau} \frac{\veps}{\delta_0} + \veps M_{15} r_0^{-\tau} + \tfrac{r_0}{2} M_{15} R^{-\tau} + r_0  M_{15} R^{-\tau}  \right) \exp(\ldots)
\\
&\le M_{17} r_0^n (\veps+R^{-\tau}).
\end{align*}

We can estimate the terms~\eqref{eqStab3} and ~\eqref{eqStab4} in a similar fashion. Let us show this for~\eqref{eqStab4} and leave the simpler term~\eqref{eqStab3} as an exercise to the reader. In what follows note that $\tilde{w}_k\le 1$ and that $|\psi_k|\le \tfrac{r_0}{2}$,$|\tilde\psi_k|\le \tfrac{r_0}{2}$ if $\varsigma_k=1$ (Lemma~\ref{lemSp}(a)). We obtain
\begin{align*}
|\eqref{eqStab4}| & \le \sum_{k=1}^{N_1} \varsigma_k |\tilde\psi_k^n-\psi_k^n| + \sum_{k=1}^{N_1}  \varsigma_k |\psi_k|^n \Big| 1-\frac{\breve{w}_k}{\tilde{\breve{w}}_k}\Big| \\
& \le N_1 \veps n \left|\tfrac{r_0}{2}\right|^{n-1}
+ \sum_{k=1}^{N_1}  \varsigma_k \left|\tfrac{r_0}{2}\right|^n \left(  \Big| 1- \frac{\sqrt{r(\psi_k)}}{\sqrt{r(\tilde\psi_k)}}\Big|  + \sum_{j=1}^{N_0} \Big| 1-\frac{\tilde\psi_k-\phi_j}{\psi_k-\phi_j} \Big| \right. \\
& \quad + \sum_{j\ne k} \Big| 1-\frac{\tilde\psi_k-\tilde\psi_j}{\psi_k-\phi_j} \Big|
+ \sum_{|\xi_j|\le R} \Big| 1-  \frac{1-\frac{\tilde\psi_k}{\tilde\xi_j}}{1-\frac{\psi_k}{\xi_j}} \Big| + \sum_{|\tilde\xi_j|>R} \frac{|\tilde\psi_k|}{|\tilde\xi_j|}
\\
& \qquad \qquad \qquad \qquad \qquad \qquad \qquad \qquad
+ \left. \sum_{|\xi_j|>R} \frac{|\psi_k|}{|\xi_j| \, |1-\tfrac{x}{\xi_j}|} \right) \exp(\ldots).
\end{align*}
Observe that
$$
\Big| 1- \frac{\sqrt{r(\psi_k)}}{\sqrt{r(\tilde\psi_k)}}\Big| = \frac{\left| r(\psi_k)-r(\tilde\psi_k)\right|}{\left(\sqrt{r(\psi_k)}+\sqrt{r(\tilde\psi_k)}\right)\sqrt{r(\tilde\psi_k)}} \le M_{18} |\psi_k-\tilde\psi_k|
$$
since $\psi_k,\tilde\psi_k \in \tfrac{r_0}{2}\bbD\setminus U_{\delta_0/2}(\fre)$. Also note that
\begin{multline*}
\sum_{|\xi_j|\le R} \Big| 1-  \frac{1-\frac{\tilde\psi_k}{\tilde\xi_j}}{1-\frac{\psi_k}{\xi_j}} \Big| \le
\sum_{|\xi_j|\le R} \frac{|\tilde\psi_k|\, |\xi_j - \tilde\xi_j| + |\tilde\xi_j|\, |\psi_k-\tilde\psi_k| }{|\xi_j|\, |\tilde\xi_j|\,|1-\tfrac{\psi_k}{\xi_j}|}
\\
\le
\sum_{|\xi_j|\le R} \frac{|\xi_j - \tilde\xi_j|}{|\tilde\xi_j|}
+
\sum_{|\xi_j|\le R} \frac{2|\psi_k-\tilde\psi_k| }{|\xi_j|}
\le \veps r_0^{-\tau} + 2\veps r_0^{-\tau}.
\end{multline*}
Combining this all together and using Lemma~\ref{lemSp}(g) we can conclude that
\begin{align*}
|\eqref{eqStab4}| & \le N_1 \veps |r_0|^n + \sum_{k=1}^{N_1}  \left|\tfrac{r_0}{2}\right|^n
\Big( M_{18}\veps  + N_0\frac{\veps}{\delta_0} \Big.  \quad \\
&
+ (N_1-1) \frac{2\veps}{\delta_0}
+ 3\veps r_0^{-\tau} + \left| \tfrac{r_0}{2}\right| R^{-\tau}
+ \Big. |r_0| R^{-\tau} \Big) \exp(\ldots) \le  M_{19} r_0^n  (\veps+R^{-\tau}).
\end{align*}
This finishes the proof of the lemma.
\end{proof}

%
\begin{lemma}\label{lemHolder}
Let $\delta_0<\min\{\frac{\alpha_{j+1}-\beta_j}{2},\frac{\beta_j-\alpha_j}{2}\}$. There exists a constant $M_{20}>0$ such that for any choice of $\kappa_j\in\{0,1\}$ $(j=1,\ldots,2p)$ and $z_j,\tilde{z}_j\in [\alpha_j-\delta_0,\alpha_j]$ $(j=1,\ldots,p)$, $z_{j+p},\tilde{z}_{j+p}\in [\beta_j,\beta_j+\delta_0]$ $(j=1,\ldots,p)$ satisfying $|z_j-\tilde{z}_j|<\veps$ $(j=1,\ldots,2p)$ the following holds:
\begin{equation}\label{eqHolder}
\int_\fre \sqrt{|r(x)|} \left| \frac{1}{\prod_{j=1}^{2p} |z_j - x|^{\kappa_j}}-\frac{1}{\prod_{j=1}^{2p} |\tilde{z}_j - x|^{\kappa_j}} \right| dx \le M_{20} \sqrt\veps.
\end{equation}
\end{lemma}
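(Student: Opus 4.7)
The natural approach is to telescope the product in the integrand so that only a single variable changes in each term, and then to analyze the resulting integrals separately near and far from the relevant endpoint of $\fre$. Write
\begin{equation*}
\prod_{j=1}^{2p}\frac{1}{|z_j-x|^{\kappa_j}}-\prod_{j=1}^{2p}\frac{1}{|\tilde z_j-x|^{\kappa_j}}
=\sum_{k=1}^{2p}\Bigl[\prod_{j<k}|\tilde z_j-x|^{-\kappa_j}\Bigr]\!\left(\frac{1}{|z_k-x|^{\kappa_k}}-\frac{1}{|\tilde z_k-x|^{\kappa_k}}\right)\!\Bigl[\prod_{j>k}|z_j-x|^{-\kappa_j}\Bigr].
\end{equation*}
It suffices to bound the $\fre$-integral (against $\sqrt{|r(x)|}$) of each of these $2p$ summands by a constant multiple of $\sqrt\veps$. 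Summands with $\kappa_k=0$ vanish, and for the remaining ones the middle factor equals $|z_k-\tilde z_k|\big/(|z_k-x||\tilde z_k-x|)$, which is controlled by $\veps\,|z_k-x|^{-1}|\tilde z_k-x|^{-1}$.

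Fix such a $k$ and let $\alpha$ be the endpoint of $\fre$ nearest $z_k$ (so $\alpha=\alpha_k$ for $k\le p$, $\alpha=\beta_{k-p}$ otherwise). Set $I_\alpha:=\fre\cap[\alpha-\delta_0,\alpha+\delta_0]$. The hypothesis $\delta_0<\min\{(\alpha_{j+1}-\beta_j)/2,(\beta_j-\alpha_j)/2\}$ guarantees that the $\delta_0$-neighborhoods of distinct endpoints of $\fre$ are disjoint with uniformly positive separation, so on $I_\alpha$ the remaining factors $\prod_{j\ne k}|\hat z_j-x|^{-\kappa_j}$ are uniformly bounded above. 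Conversely, on $\fre\setminus I_\alpha$ both denominators $|z_k-x|,|\tilde z_k-x|$ are bounded below by $\delta_0/2$, while $\sqrt{|r(x)|}\prod_{j\ne k}|\hat z_j-x|^{-\kappa_j}$ is a uniformly integrable function on $\fre$ (any $|x-\alpha'|^{-1}$ singularity at another endpoint $\alpha'$ is absorbed by the $\sqrt{|x-\alpha'|}$ factor coming from $\sqrt{|r(x)|}$). Hence the $\fre\setminus I_\alpha$ contribution is $O(\veps)$, which is dominated by $\sqrt\veps$.

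On $I_\alpha$ we have $\sqrt{|r(x)|}\le C\sqrt{|x-\alpha|}$. Substituting $u=|x-\alpha|$, $a=|z_k-\alpha|$, $\tilde a=|\tilde z_k-\alpha|$ (so that $|z_k-x|=u+a$ and $|\tilde z_k-x|=u+\tilde a$, since $z_k,\tilde z_k$ lie on the opposite side of $\alpha$ from $x\in\fre$), the $I_\alpha$-contribution is bounded by a constant multiple of
\begin{equation*}
|a-\tilde a|\int_0^{\delta_0}\frac{\sqrt u\,du}{(u+a)(u+\tilde a)}
\;=\;g(\tilde a)-g(a),\qquad g(s):=2\sqrt s\,\arctan\sqrt{\delta_0/s},\quad g(0):=0,
\end{equation*}
by the elementary antiderivative $\int_0^L\!\sqrt u/(u+a)\,du=2\sqrt L-2\sqrt a\,\arctan\sqrt{L/a}$.

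Everything thus reduces to the H\"older-$1/2$ estimate $|g(\tilde a)-g(a)|\le C\sqrt{|\tilde a-a|}$ on $[0,\delta_0]$, which I prove by a dichotomy: a direct computation gives $g'(s)=\tfrac{1}{\sqrt s}\arctan\sqrt{\delta_0/s}-\tfrac{\sqrt{\delta_0}}{s+\delta_0}$, hence $|g'(s)|\le C/\sqrt s$ on $(0,\delta_0]$; so if $\min(a,\tilde a)\ge|a-\tilde a|$ the mean value theorem yields $|g(\tilde a)-g(a)|\le C|\tilde a-a|/\sqrt{\min(a,\tilde a)}\le C\sqrt{|\tilde a-a|}$, while if $\min(a,\tilde a)<|a-\tilde a|$ then $\max(a,\tilde a)\le 2|a-\tilde a|$, and the trivial bound $|g(s)|\le\pi\sqrt s$ gives $|g(\tilde a)-g(a)|\le\pi(\sqrt a+\sqrt{\tilde a})\le 2\sqrt 2\,\pi\sqrt{|\tilde a-a|}$. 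Summing the $2p$ telescoped terms then yields~\eqref{eqHolder}. The main obstacle is precisely this H\"older-$1/2$ estimate; it is sharp and accounts for the $\sqrt\veps$ rate (rather than $\veps$) in Theorem~\ref{thmStability}, reflecting the square-root vanishing of $\sqrt{|r(x)|}$ at the band edges of $\fre$.
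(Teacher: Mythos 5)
Your proof is correct, and it shares the paper's overall architecture: reduce to changing one variable at a time (your telescoping identity is the same reduction the paper makes by passing through intermediate configurations), isolate the band edge $\alpha$ nearest the moving point, and exploit the square-root vanishing of $\sqrt{|r(x)|}$ there to convert a would-be $O(\veps/\sqrt{\veps\,})$-type singularity into the H\"older-$1/2$ rate. Where you genuinely diverge is in the execution of the central estimate. The paper first uses monotonicity of $\prod_j|z_j-x|^{-\kappa_j}$ in the moving variable to pull the absolute value outside the integral, then interpolates $z_l(\theta)=(1-\theta)z_l+\theta\tilde z_l$, bounds $|\tfrac{d}{d\theta}D(\theta)|\le C\veps+C\sqrt{\veps}/\sqrt{\min\{\theta,1-\theta\}}$ using $\alpha_l-z_l(\theta)\ge|z_l-\tilde z_l|\min\{\theta,1-\theta\}$, and integrates the resulting $\theta^{-1/2}$ singularity. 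You instead bound the difference of reciprocals pointwise by $\veps\,|z_k-x|^{-1}|\tilde z_k-x|^{-1}$, evaluate the model integral in closed form, and prove the H\"older-$1/2$ estimate for the explicit function $g(s)=2\sqrt{s}\arctan\sqrt{\delta_0/s}$ by a clean dichotomy on $\min(a,\tilde a)$ versus $|a-\tilde a|$. Your route avoids the monotonicity step entirely (you never need to remove the absolute value from the integrand) and makes the sharpness of the exponent $1/2$ transparent in the formula for $g$; the paper's route avoids computing any antiderivative and generalizes more readily when no closed form is available. Two cosmetic points: the quantity $|a-\tilde a|\int_0^{\delta_0}\sqrt{u}\,du/((u+a)(u+\tilde a))$ equals $|g(\tilde a)-g(a)|$ rather than $g(\tilde a)-g(a)$ (harmless, since you then bound the absolute value), and the absorption of the $O(\veps)$ far-field term into $M_{20}\sqrt{\veps}$ uses $\veps\le 1$, which the paper also assumes implicitly.
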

\begin{remarks}
1. As we are about to see in the proof, this lemma is equivalent to saying that the function (and $2^p$ of its close relatives)
$$
\int_\fre \frac{\sqrt{|r(x)|}}{\prod_{j=1}^{2p} |z_j - x|^{\kappa_j}} dx
$$
is H\"{o}lder continuous in each variable with the H\"{o}lder exponent $1/2$. This may be a well-known fact, but we provide a proof in case it is not.

2. The exponent $1/2$ in $\sqrt{\veps}$ on the right-hand side of~\eqref{eqHolder} is sharp, as one can see by taking $\kappa_1=1,\kappa_2=\ldots=\kappa_{2p}=0$, $z_1=\alpha_1, \tilde{z}_1=\alpha_1-\veps$ and reusing~\eqref{contourInt1}.
\end{remarks}
\begin{proof}
It is enough to prove the statement for the case when $z_j =\tilde{z}_j$ for all $j\ne l$, and $z_l\ne \tilde{z}_l$. We can also assume $\kappa_l=1$ since otherwise the inequality~\eqref{eqHolder} is trivial. Finally, assume $1\le l\le p$.

Note that as $z_l$ varies over $(\alpha_l-\delta_0,\beta_l]$ and the rest of $z_j$'s are fixed, the function $\prod_{j=1}^{2p} |z_j - x|^{-\kappa_j}$ is monotonically decreasing for all $x\in (\alpha_k,\beta_k)$, $k=1,\ldots,l-1$, and monotonically increasing for all $x\in (\alpha_k,\beta_k)$, $k=l,\ldots,p$. Therefore one can get rid of the absolute values in the left-hand side of~\eqref{eqHolder} and reduce~\eqref{eqHolder} to
$$
|D(z_1,\ldots,z_{l-1},z_l,z_{l+1} \ldots, z_{2p})-D(z_1,\ldots,z_{l-1},\tilde{z}_{l},z_{l+1},\ldots,z_{2p})|\le M_{20} \sqrt\veps,
$$
where
$$
D(z_1,\ldots,z_{2p}):= \sum_{k=1}^p \pm \int_{\alpha_k}^{\beta_k} \frac{\sqrt{|r(x)|}}{\prod_{j=1}^{2p} (z_j - x)^{\kappa_j}} dx,
$$
where the plus/minus signs  depend on $l$ and the configuration of  $\kappa_j$'s.

Let $z_l(\theta) = (1-\theta) z_l + \theta \tilde{z}_l$ with $0\le \theta\le 1$. Note that $|z'_l(\theta)|=|\tilde{z}_l-z_l|<\veps$.
If we denote $D(\theta):=D(z_1,\ldots,z_{l-1},z_l(\theta),z_{l+1} \ldots, z_{2p})$ then we need to estimate $|D(1)-D(0)| = |\int_0^1 \frac{d}{d\theta} D(\theta) d\theta|$. Note that
\begin{equation}\label{Dprime}
\tfrac{d}{d\theta}D(\theta) = \sum_{k=1}^p \mp \int_{\alpha_k}^{\beta_k} \frac{\sqrt{|r(x)|}}{\prod_{j\ne l} (z_j - x)^{\kappa_j}}
\frac{z'_l(\theta)}{(z_l(\theta)-x)^2} dx.
\end{equation}
The $k\ne l$ terms in~\eqref{Dprime} are easy to estimate in the absolute value  since $|z_l(\theta)-x|>\delta_0$ for $x\in[\alpha_k,\beta_k]$, which implies
\begin{multline*}
 \sum_{k\ne l}  \left| \int_{\alpha_k}^{\beta_k} \frac{\sqrt{|r(x)|}}{\prod_{j\ne l} (z_j - x)^{\kappa_j}}
\frac{z'_l(\theta)}{(z_l(\theta)-x)^2} dx \right| \\
 \le  \frac{\veps}{\delta_0^{2p-1}} \sum_{k=1}^p \int_{\alpha_k}^{\beta_k}
\frac{\sqrt{|r(x)|}}{|\alpha_k - x|^{\kappa_k} \,|\beta_k-x|^{\kappa_{k+p}}} dx \le M_{21} \veps.
\end{multline*}
We are left with estimating the $k=l$ term in~\eqref{Dprime}. In what follows we will use the fact that $\sqrt{|r(x)|}$ is H\"{o}lder continuous with the exponent $1/2$ on $[\alpha_l,\beta_l]$, so
$$
\sqrt{|r(x)|}-\sqrt{|r(\alpha_l)|}\le M_{22} \sqrt{x-\alpha_l} \le M_{22} \sqrt{x-z_l(\theta)}.
$$
Let $s_l = \tfrac12 (\alpha_l+\beta_l)$ and split $\int_{\alpha_l}^{\beta_l}$ as $\int_{\alpha_l}^{s_l}+ \int_{s_l}^{\beta_l}$. The $k=l$ term can then be estimated in the absolute value by
\begin{align*}
\left| \int_{\alpha_l}^{\beta_l} \right. & \left.  \frac{\sqrt{|r(x)|}}{\prod_{j\ne l} (z_j - x)^{\kappa_j}}
\frac{z'_l(\theta)}{(z_l(\theta)-x)^2} dx \right| \\
& \le  \frac{\veps  }{\delta_0^{2p-1}} \int_{\alpha_l}^{s_l}
\frac{\sqrt{|r(x)|}-\sqrt{|r(\alpha_l)|}}{|x-z_l(\theta)|^2 } dx + \frac{\veps }{\delta_0^{2p}} \int_{s_l}^{\beta_l} \frac{\sqrt{|r(x)|}}{|\beta_l - x|^{\kappa_{l+p}} } dx
\\
& \le M_{23} \veps \int_{\alpha_l}^{s_l}
\frac{\sqrt{x-z_l(\theta) }}{|x-z_l(\theta)|^{2} } dx + M_{23} \veps
\\
& =
M_{23} \veps \left( \tfrac{2}{\sqrt{\alpha_l-z_l(\theta)}} -\tfrac{2}{\sqrt{s_l-z_l(\theta)}} \right)
 + M_{23} \veps  \le  \frac{2M_{23}\sqrt\veps}{\sqrt{\min\{\theta,1-\theta\}}} + M_{23}\veps,
\end{align*}
where on the last line we used a trivial bound
$$
\alpha_l-z_l(\theta) \ge \min\{|z_l-z_l(\theta)|,|\tilde{z}_l-z_l(\theta)|\} = |z_l-\tilde{z}_l| \min\{\theta,1-\theta\}.
$$

Assuming $\veps<1$ and combining all the terms together we obtain
$$
|D(1)-D(0)| \le \int_0^1 |\tfrac{d}{d\theta} D(\theta)| d\theta \le M_{24} \sqrt{\veps} \int_0^1 \frac{1}{\sqrt{\min\{\theta,1-\theta\}}} d\theta = M_{25}\sqrt{\veps},
$$
which finished the proof.
\end{proof}

\begin{lemma}\label{lemStabNo2}
Suppose that $\psi_j = \tilde{\psi}_j$ for all $1\le j \le N_1$ and $\xi_j = \tilde{\xi}_j$ for all $1\le j \le N_2$. Then under the conditions of Theorem~\ref{thmStability},
$$
|m_n-\tilde{m}_n| \le M_{32} r_0^n (\sqrt{\veps} + R^{-\tau}).
$$
\end{lemma}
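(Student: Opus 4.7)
The plan is to follow the strategy of Lemma~\ref{lemStabNo1}: first show
\begin{equation*}
\Big|\tilde m_n - \tfrac{A}{\tilde A} m_n\Big| \le \tfrac{1}{2} M_{32}\, r_0^n\,(\sqrt\veps + R^{-\tau}),
\end{equation*}
and then conclude via the triangle inequality $|m_n - \tilde m_n| \le |m_n|\,\big|\tilde m_0 - \tfrac{A}{\tilde A} m_0\big| + \big|\tilde m_n - \tfrac{A}{\tilde A}m_n\big|$. Under the present hypotheses $\psi_j=\tilde\psi_j$, $\xi_j=\tilde\xi_j$, multiplying by $\tilde A$ decomposes $\tilde A\big(\tilde m_n - \tfrac{A}{\tilde A} m_n\big)$ into three pieces: (i) an a.c. integral $\int_\fre x^n\sqrt{|r(x)|}\,G(x)\big(\prod_{j}\tfrac{1}{|x-\tilde\phi_j|}-\prod_{j}\tfrac{1}{|x-\phi_j|}\big)\,dx$, where $G(x)=\big(\prod_{j=1}^{N_1}|x-\psi_j|\prod_{j=1}^{N_2}|1-x/\xi_j|\big)^{-1}$ is a factor common to both $|a|$ and $|\tilde a|$; (ii) a sum over edge eigenvalues $\sum_{k}\sigma_k\big(F(\tilde\phi_k;\tilde\phi)-F(\phi_k;\phi)\big)$, where $F(x;\vec z):=2\pi\sqrt{|r(x)|}\,x^n\big(\prod_{j\ne k}|x-z_j|\prod_j|x-\psi_j|\prod_j|1-x/\xi_j|\big)^{-1}$; and (iii) a sum $\sum_{k}\varsigma_k\sqrt{|r(\psi_k)|}\,\psi_k^n\, L_k\,\big(\prod_j|\psi_k-\tilde\phi_j|^{-1}-\prod_j|\psi_k-\phi_j|^{-1}\big)$ capturing how the shift $\phi_j\to\tilde\phi_j$ perturbs the weights at the $\psi_k$.

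Piece (i) is precisely the content of Lemma~\ref{lemHolder}, once I observe that every $\phi_j$ is automatically real: complex resonances come in complex-conjugate pairs, but by Lemma~\ref{lemSp}(e) each edge disk contains at most one singularity and by (f) none lie in $W_{\delta_0}(\fre)$, so a non-real $\phi_j$ would force two singularities into the same edge disk. Hence each $\phi_j$ sits in $[\alpha_l-\delta_0,\alpha_l]\cup[\beta_l,\beta_l+\delta_0]$ for some $l$, matching the hypotheses of Lemma~\ref{lemHolder}. The prefactor $G(x)/\tilde A$ is uniformly bounded on $\fre$: from Lemma~\ref{lemSp}(c),(d) one has $N_1\le M_{15}r_0^{1-\tau}$ and $\sum 1/|\xi_j|\le M_{15}r_0^{-\tau}$, and the lower bound $|x-\psi_j|\ge \delta_0$ together with Lemma~\ref{lemTech}(b) controls $\prod|1-x/\xi_j|$. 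Bounding $|x|^n\le (r_0/2)^n$ and invoking Lemma~\ref{lemHolder} then yields piece (i) $=O(r_0^n\sqrt\veps)$; this is the sole source of the $\sqrt\veps$.

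Piece (iii) is handled by elementary Lipschitz estimates: $|\psi_k-\phi_j|\ge \delta_0$ by Lemma~\ref{lemSp}(g) and $|\psi_k-\tilde\phi_j|\ge \delta_0-\veps_0$, so Lemma~\ref{lemTech}(c) bounds the difference of products by $O(N_0\veps)$, giving an $O(r_0^n\veps)$ contribution. Piece (ii) is the most delicate and constitutes the main obstacle: I would split $F(\tilde\phi_k;\tilde\phi)-F(\phi_k;\phi)=\big[F(\tilde\phi_k;\tilde\phi)-F(\tilde\phi_k;\phi)\big]+\big[F(\tilde\phi_k;\phi)-F(\phi_k;\phi)\big]$. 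In the first bracket only the parameters $z_j$ ($j\ne k$) shift, and since distinct $\phi_j$'s live in different edge disks at a fixed positive distance from $\tilde\phi_k$ (Lemma~\ref{lemSp}(e) and the choice of $\delta_0$), every factor is Lipschitz and the bracket is $O(\veps)$. In the second bracket only the evaluation point moves from $\phi_k$ to $\tilde\phi_k$; every factor is Lipschitz in $x$ near $\tilde\phi_k$ \emph{except} $\sqrt{|r(x)|}$, for which the elementary H\"older-$\tfrac12$ bound $|\sqrt{|r(\tilde\phi_k)|}-\sqrt{|r(\phi_k)|}|\le M\sqrt\veps$ holds, so the bracket is $O(\sqrt\veps)$. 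Summing over the uniformly bounded $N_0\le 2p$ terms gives $O(r_0^n\sqrt\veps)$. The $n$-dependent Lipschitz factor $n(r_0/2)^{n-1}$ from $x\mapsto x^n$ is absorbed exactly as in Lemma~\ref{lemStabNo1} using $\sup_n n(1/2)^{n-1}<\infty$; combining all three pieces and using $\veps\le\sqrt\veps$ for $\veps<1$ yields the claim. (The $R^{-\tau}$ in the statement is superfluous under the present hypotheses; it is kept so that Lemmas~\ref{lemStabNo1} and~\ref{lemStabNo2} combine uniformly in the proof of Theorem~\ref{thmStability}.)
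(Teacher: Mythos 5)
Your proof is correct and follows essentially the same route as the paper: the same three-piece decomposition of $\tilde m_n - (A/\tilde A)m_n$ into the a.c. integral, the edge-eigenvalue sum, and the $\psi_k$-weight sum, with Lemma~\ref{lemHolder} supplying the $\sqrt{\veps}$ from the a.c. part, the H\"older-$1/2$ continuity of $\sqrt{|r|}$ handling the moving edge singularities, and Lipschitz bounds elsewhere. The only step you gloss over is the uniform lower bound $\tilde A\ge M_{26}$ needed to control the prefactor, which the paper extracts from the normalization identity~\eqref{constA}; on the other hand, your explicit argument that the $\phi_j$ must be real (conjugate pairs plus at most one singularity per edge disk and none in $W_{\delta_0}(\fre)$) is a justification for invoking Lemma~\ref{lemHolder} that the paper leaves implicit, and your remark that $R^{-\tau}$ is superfluous under the present hypotheses is accurate.
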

\begin{remark}
$\sqrt{\veps}$ cannot be improved.
\end{remark}
\begin{proof}
Just as in Lemma~\ref{lemStabNo1} we want to bound the left-hand side of~\eqref{eqStab1}, which in this case takes the form
\begin{align}
\label{eqStab5} \Big| \int_\fre & \frac{x^n \sqrt{|r(x)|}}{\tilde{A} \prod_{j=1}^{N_1}|x-\psi_j| \prod_{j=1}^{N_2} |1-\frac{x}{\xi_j}| }  \left(  \frac{1}{\prod_{j=1}^{N_0}|x-\tilde\phi_j|} -  \frac{1}{\prod_{j=1}^{N_0}|x-\phi_j|}  \right) dx \Big.\\
\label{eqStab6}  & \qquad \qquad  \qquad \qquad \Big. + \sum_{k=1}^{N_0} \sigma_k \tilde\mu_k(\tilde\phi_k^n-\phi_k^n) + \sum_{k=1}^{N_0} \sigma_k  \phi_k^n \frac{1}{\tilde{A}} \Big( \tilde{\breve\mu}_k  - \breve{\mu}_k\Big) \Big.\\
\label{eqStab7}  &  \qquad \qquad \qquad \qquad \Big. + \sum_{k=1}^{N_1} \varsigma_k \tilde{w}_k \psi_k^n \Big(1 - \frac{\breve{w}_k}{\tilde{\breve{w}}_k} \Big)
\Big|.
\end{align}

First of all note by~\eqref{constA} and~\eqref{sumlogs2}, we get
$$
\tilde{A} \ge  \int_\fre \frac{\sqrt{|r(x)|}}{(2r_0)^{N_1} \prod_{j=1}^{N_2} (1+\frac{r_0/2}{|\xi_j|}) \prod_{j=1}^{N_0} |x-\tilde\phi_j|}  dx \ge M_{26}
$$

Therefore for $x\in\fre$,
\begin{equation*}
 \frac{|x|^n }{ \tilde{A} \prod_{j=1}^{N_1}|x-\psi_j| \prod_{j=1}^{N_2} |1-\frac{x}{\xi_j}| } \le \frac{\left|\tfrac{r_0}{2}\right|^n}{ M_{26} \delta_0^{N_1} \prod_{j=1}^{N_2} (1-\frac{r_0/2}{|\xi_j|}) }  \le M_{27} r_0^n,
\end{equation*}
where on the last step we reused the computation~\eqref{sumlogs1}.

This estimate together with Lemma~\ref{lemHolder} allows us to bound~\eqref{eqStab5} in the absolute value by $M_{20} M_{27} r_0^n \sqrt\veps$.

The first term of~\eqref{eqStab6} is $\le N_0 n \left| \tfrac{r_0}{2} \right|^{n-1} \veps \le M_{28} r_0^n \veps$. The second term of~\eqref{eqStab6} can be bounded in the absolute value by
\begin{multline*}
\sum_{k=1}^{N_0} \sigma_k  \left| \tfrac{r_0}{2} \right|^n M_{26}^{-1}
\frac{2\pi  }{ \prod_{j\ne k} |\tilde\phi_k-\tilde\phi_j| \prod_{j=1}^{N_1} |\tilde\phi_k-\psi_j| \prod_{j=1}^{N_2} |1-\frac{\tilde\phi_k}{\xi_j}| } \\
\times \Big| \sqrt{r(\tilde{\phi_k})}  - \sqrt{r(\phi_k)} \prod_{j\ne k}\frac{|\tilde\phi_k-\tilde\phi_j|}{|\phi_k-\phi_j|} \prod_{j=1}^{N_1} \frac{|\tilde\phi_k-\psi_j|}{|\phi_k-\psi_j|}
\prod_{j=1}^{N_2} \frac{ |1-\frac{\tilde\phi_k}{\xi_j}|}{|1-\frac{\phi_k}{\xi_j}|}  \Big| \\
\le M_{29} \sum_{k=1}^{N_0} \sigma_k  r_0^n  \Bigg[ \Big| \sqrt{r(\tilde{\phi_k})}  - \sqrt{r(\phi_k)} \Big| \Bigg. \\
+ \Bigg. \sqrt{r(\phi_k)} \Big| 1-  \prod_{j\ne k}\frac{|\tilde\phi_k-\tilde\phi_j|}{|\phi_k-\phi_j|} \prod_{j=1}^{N_1} \frac{|\tilde\phi_k-\psi_j|}{|\phi_k-\psi_j|}
\prod_{j=1}^{N_2} \frac{ |1-\frac{\tilde\phi_k}{\xi_j}|}{|1-\frac{\phi_k}{\xi_j}|} \Big| \Bigg].
\end{multline*}
Now note that $\sqrt{r(x)}$ is H\"{o}lder continuous with the H\"{o}lder exponent $1/2$ on $U_{\delta_0}(\fre)\cap (\bbR\setminus \operatorname{Int}(\fre))$, i.e., $\Big| \sqrt{r(\tilde{\phi_k})}  - \sqrt{r(\phi_k)} \Big| \le M_{30} \sqrt{\veps}$; $\sqrt{r(\phi_k)}$ is bounded above on $U_{\delta_0}(\fre)$; and the expression $\Big| 1-  \prod_{j\ne k}\frac{|\tilde\phi_k-\tilde\phi_j|}{|\phi_k-\phi_j|} \prod_{j=1}^{N_1} \frac{|\tilde\phi_k-\psi_j|}{|\phi_k-\psi_j|}
\prod_{j=1}^{N_2} \frac{ |1-\frac{\tilde\phi_k}{\xi_j}|}{|1-\frac{\phi_k}{\xi_j}|} \Big|$ can be estimated by $M_{31} (\veps + R^{-\tau})$ using the same arguments as for~\eqref{eqStab2} in the proof of Lemma~\ref{lemStabNo1}.

This finished an estimate on~\eqref{eqStab6}. Finally,  ~\eqref{eqStab7} can be estimated similarly and is left as an exercise to the reader (note that $\sqrt{r(\psi_k)}$ cancels in the ratio $\frac{\breve{w}_k}{\tilde{\breve{w}}_k}$, which simplifies the estimate).
\end{proof}

\begin{lemma}\label{lemMomToCoeff}
Suppose that for some $L\ge 2$ and $\epsilon>0$, the Jacobi coefficients of $\calJ$ satisfy
\begin{align}
\label{obv1} &|a_n| + |b_n| \le L, \\
&|a_n| \ge 1/L,
\end{align}
and the moments of the spectral measure satisfy
\begin{align}
\label{obv3} &|m_n| \le L^n,\\
\label{obv4} &|m_n -\tilde m_n| \le L^n \epsilon
\end{align}
for all $n$.
Then
\begin{align*}
&|a_n-\tilde{a}_n| \le L^{5(n+1)^2} \epsilon, \\
&|b_n-\tilde{b}_n| \le L^{4n^2} \epsilon
\end{align*}
for all $n$.
\end{lemma}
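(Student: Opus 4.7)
The plan is to express $a_n$ and $b_n$ as explicit rational functions of the moments of the spectral measure, and then carry out elementary Lipschitz estimates. Since both $\mu$ and $\tilde\mu$ are probability measures, $m_0=\tilde m_0=1$. With the Hankel determinants $H_k=\det[m_{i+j}]_{i,j=0}^{k-1}$, set $r_k:=H_{k+1}/H_k=(a_1\cdots a_k)^2=\|P_k\|_\mu^2$ (the squared norm of the $k$-th monic orthogonal polynomial); then $a_n^2=r_n/r_{n-1}$. For $b_n$, write $P_k(x)=x^k+c_{k,k-1}x^{k-1}+\ldots$ and compare the $x^{n-1}$ coefficients in $xP_{n-1}=P_n+b_nP_{n-1}+a_{n-1}^2P_{n-2}$ to get $b_n=c_{n-1,n-2}-c_{n,n-1}$. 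Cramer's rule applied to the orthogonality system $\sum_{j=0}^{k-1}c_{k,j}m_{j+\ell}=-m_{k+\ell}$ ($\ell=0,\ldots,k-1$) gives $c_{k,k-1}=-G_k/H_k$, where $G_k$ is $H_k$ with its last column replaced by $(m_k,m_{k+1},\ldots,m_{2k-1})^T$. Hence $b_n=G_n/H_n-G_{n-1}/H_{n-1}$. Note that $a_n$ involves moments up to $m_{2n}$, while $b_n$ involves only $m_0,\ldots,m_{2n-1}$; this is the source of the asymmetry between the exponents $5(n+1)^2$ and $4n^2$.

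The estimates then proceed in three routine steps. First, from $1/L\le a_k\le L$ and the factorization $H_k=\prod_{j=0}^{k-1}\|P_j\|^2=\prod_{j=0}^{k-1}(a_1\cdots a_j)^2$ one gets $L^{-k(k-1)}\le H_k\le L^{k(k-1)}$, and the a priori bounds $|r_k|\le L^{2k}$, $|s_k|:=|G_k/H_k|\le kL$ (using $s_k=\sum_{j=1}^k b_j$). By Hadamard or Leibniz, $|G_k|\le kL^{k(k-1)+1}$. Second, Leibniz expansion yields $n!$ products of $n$ moments each; telescoping one factor at a time using $|m_j|\le L^{2k-2}$ and $|m_j-\tilde m_j|\le L^j\epsilon\le L^{2k-2}\epsilon$, together with $k!\le L^{k^2}$ (valid for $L\ge 2$, since $k\log k\le k^2\log L$), gives $|H_k-\tilde H_k|,|G_k-\tilde G_k|\le L^{Ck^2}\epsilon$ for an absolute constant $C$. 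Third, apply the elementary ratio inequality
\begin{equation*}
\Big|\tfrac{p}{q}-\tfrac{\tilde p}{\tilde q}\Big|\le \tfrac{|p-\tilde p|}{|\tilde q|}+\Big|\tfrac{p}{q}\Big|\tfrac{|q-\tilde q|}{|\tilde q|}
\end{equation*}
once to each of $r_k=H_{k+1}/H_k$ and $s_k=G_k/H_k$, using the a priori bounds $|r_k|\le L^{2k}$ and $|s_k|\le kL$ to avoid picking up a spurious factor $1/|H_k|\le L^{k(k-1)}$. This yields $|r_k-\tilde r_k|,|s_k-\tilde s_k|\le L^{C'k^2}\epsilon$. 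Applying the ratio inequality a second time to $a_n^2=r_n/r_{n-1}$, and exploiting $|a_n^2|\le L^2$ similarly, produces $|a_n^2-\tilde a_n^2|\le L^{5(n+1)^2-1}\epsilon$, whence $|a_n-\tilde a_n|=|a_n^2-\tilde a_n^2|/(a_n+\tilde a_n)\le (L/2)|a_n^2-\tilde a_n^2|\le L^{5(n+1)^2}\epsilon$. For $b_n$, one just uses $|b_n-\tilde b_n|\le|s_n-\tilde s_n|+|s_{n-1}-\tilde s_{n-1}|\le L^{4n^2}\epsilon$.

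The main obstacle is not conceptual but purely arithmetical. Every step uses only elementary inequalities, yet to arrive at exactly $5(n+1)^2$ and $4n^2$ one must carefully exploit the a priori bounds $|a_n^2|\le L^2$ and $|s_k|\le kL$ in the ratio steps; otherwise an extraneous $1/|H_n|\le L^{n(n-1)}$ enters from the denominator and pushes the exponent up. The generous constants $5$ and $4$ (rather than $3$) absorb the logarithmic slack from factors like $k!$, $k$, and a final factor of $L/2$; this can be verified directly for small $n$ (where the polynomial prefactors dominate) and then extended to all $n$ by the uniformity of the bounds.
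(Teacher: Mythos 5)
Your proposal is correct and takes essentially the same route as the paper's proof: Heine's formulas expressing $a_n$ and the partial sums $\sum_{j\le n} b_j$ as ratios of Hankel determinants, two-sided bounds $L^{-n(n-1)}\le h_n\le L^{n(n-1)}$ from $h_n=\prod_j a_j^{2(n-j)}$, Leibniz/telescoping bounds on $|h_n-\tilde h_n|$ and $|g_n-\tilde g_n|$, and elementary ratio estimates using the a priori bounds to avoid extraneous powers of $1/h_n$. The only cosmetic difference is that you apply the ratio inequality twice (first to $r_k=H_{k+1}/H_k$, then to $a_n^2=r_n/r_{n-1}$, finishing with $|a_n-\tilde a_n|=|a_n^2-\tilde a_n^2|/(a_n+\tilde a_n)$) where the paper invokes its four-factor square-root estimate, Lemma~\ref{lemTech}(d), in a single step.
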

\begin{remarks}
1. \eqref{obv1} directly implies $||\calJ||\le 3L$ which implies $|m_n|\le (3L)^n$, so~\eqref{obv3} is not really a restriction but rather a convenience in the choice of $L$.

2. The exponents $5(n+1)^2$ and $4n^2$ are by no means optimal. This is not crucial however since the main point of this lemma is to show that we can make the first Jacobi coefficients of $\calJ$ and $\tilde\calJ$ as close as we want by shrinking $\epsilon$ in~\eqref{obv4}.
\end{remarks}
\begin{proof}
Let  $h_n$ ($n\ge 1$) be the Hankel determinant $h_n = \det (m_{j+k-2} )_{j,k=1}^n$. We take $h_0=1$. Let us also define $g_n$ to be the determinant of the same matrix $(m_{j+k-2} )_{j,k=1}^n$ but with the last column $\{m_{n-1},\ldots,m_{2n-2}\}^T$ replaced by $\{m_{n},\ldots,m_{2n-1}\}^T$. Using Heine's formula (see, e.g.,~\cite[Sect~1.7]{OPUC1}), we know that
\begin{align}
\label{heine1} & a_n = \sqrt{\frac{h_{n+1} h_{n-1}}{h_n^2}}, \\
\label{heine2} & b_1+\ldots+b_n = \frac{g_n}{h_n}.
\end{align}

Using the definition of the determinant, Lemma~\ref{lemTech}(c), and~\eqref{obv3}--\eqref{obv4}, we obtain $|h_n-\tilde{h}_n| \le n! n L^{n(n-1)} \epsilon$ and $|g_n-\tilde{g}_n| \le n! n L^{n(n-1)+1} \epsilon$.

Note that~\eqref{heine1} implies that $h_n = \prod_{j=1}^{n-1} a_j^{2(n-j)}$ which gives
\begin{equation}\label{hBound}
L^{-n(n-1)} \le h_n\le L^{n(n-1)}.
\end{equation}

Now apply Lemma~\ref{lemTech}(d) to~\eqref{heine1} with $H=L^{n(n+1)}$, $e=(n+2)! L^{n(n+1)}\epsilon$:
$$
|a_n-\tilde{a}_n|\le 2(n+2)! L^{5n(n+1)} \epsilon \le L^{5(n+1)^2} \epsilon,
$$
where we used an elementary $2(n+2)!\le L^{5n+5}$ for $L\ge2$.

Using~\eqref{hBound} and~\eqref{heine2} we get $|g_n| \le nL^{n(n-1)+1}$. Then
$$
\left| \frac{g_n}{h_n} - \frac{\tilde{g}_n}{\tilde{h}_n} \right| \le \frac{|g_n|\, |h_n-\tilde{h}_n| + |h_n|\, |g_n-\tilde{g}_n| }{|h_n\tilde{h}_n|} \le 2(n+1)!L^{4n(n-1)+1},
$$
which implies
$$
|b_n-\tilde{b}_n| \le 2(n+1)!L^{4n(n-1)+1} \le L^{4n^2}
$$
since $4(n+1)!\le L^{4n-1}$ for $L\ge2$.
\end{proof}

\begin{proof}[Proof of Theorem~\ref{thmStability}]

Applying Lemmas~\ref{lemStabNo1} and~\ref{lemStabNo2} we obtain that there exist uniform constants $M_{33}$ and $r_0$ such that $|m_n-\tilde{m}_n| \le M_{33} r_0^n (\sqrt{\veps} + R^{-\tau})$ for all $n$. An application of Lemma~\ref{lemMomToCoeff} with $L=\max\{M_0,1/\gamma,r_0\}$ and $\epsilon = M_{33}(\sqrt\veps + R^{-\tau})$ finishes the proof.
\end{proof}

\section{Other applications}\label{sApplications}

Our spectral characterization allows us to improve the results of Damanik--Simon~\cite{DS2} and Geronimo~\cite{Geronimo}.

\subsection{$p=1$ and perturbation determinants}\label{ssPerturbationDet}

In this subsection let us restrict ourselves to the case $p=1$. After a shift and a stretch we may put $\fre=[-2,2]$.

Damanik--Simon in~\cite{DS2} studied the problem of classifying all perturbation determinants (equivalently, Jost functions) of~\eqref{expFree} and~\eqref{evFree}. 
The perturbation determinant of $\calJ$ is defined via
$$
L(z)=\det\left( 1 + (\calJ-\calJ^{\circ}) \left[\calJ^{\circ}-(z+z^{-1})\right]^{-1} \right),
$$
where $\calJ^{\circ}$ is the free Jacobi operator. Another way to think about it is as $L(z)=\frac{u(z)}{u(0)}$, where $u(z)$ is the Jost function of $\calJ$ (for more details, see \cite{DS2,KS}). 

Their classification~\cite[Thm 1.9 and Thm 1.11]{DS2} of $L(z)$ involved a non-explicit condition (for the canonical eigenweights to end up being positive). We can improve their result by observing that this condition can be restated in terms of $L$ only, as the oddly interlacing type property of its zeros.

We will state the result for the exponentially decaying perturbations only. For finite range perturbations the result is similar but with $L$ being a real polynomial.

\begin{theorem}\label{DamSim}
Let $R > 1$. Then a function $L(z)$ on $\bbD$ is a perturbation determinant of a Jacobi matrix satisfying
\begin{equation*}
\limsup_{n\to\infty} \left( |1-a_n|+|b_n|\right)^{1/2n} \le R^{-1}
\end{equation*}
if and only if
\begin{itemize}
\item[(i)] $L(z)$ has a real analytic continuation to $\{z : |z| < R\}$;
\item[(ii)] $L(z)$ is non-vanishing on $\overline{\bbD}\setminus\bbR$;
\item[(iii)] all the zeros of $L(z)$ in $\overline{\bbD}\cap\bbR$ are simple;
\item[(iv)] $L(0)=1$;
\item[(v)] Let $0<x_n<\ldots<x_2<x_1<1$ be the positive zeros of $L$ in $\bbD$, and suppose $x_{k+1}\le R^{-1}<x_k$. Then
\begin{itemize}
\item[(a)] There is an even number of zeros $($counting with multiplicities$)$ of $L$ on $[1,x_1^{-1})$;
\item[(b)] There is an odd number of zeros $($counting with multiplicities$)$  of $L$ on $(x_j^{-1},x_{j+1}^{-1})$ $(1\le j\le k-1)$;
\item[(c)] $x_j^{-1}$ is not a zero of $L$ $(1\le j\le k)$;
\end{itemize}
\item[(vi)] Let $-1<y_1<y_2<\ldots<y_n<0$ be the negative zeros of $L$ in $\bbD$, and suppose $y_{k}< -R^{-1}\le y_{k+1}$. Then
\begin{itemize}
\item[(a)] There is an even number of zeros $($counting with multiplicities$)$ of $L$ on $(y_1^{-1},-1]$;
\item[(b)] There is an odd number of zeros $($counting with multiplicities$)$  of $L$ on $(y_{j+1}^{-1},y_j^{-1})$ $(1\le j\le k-1)$;
\item[(c)] $y_j^{-1}$ is not a zero of $L$ $(1\le j\le k)$.
\end{itemize}
\end{itemize}
\end{theorem}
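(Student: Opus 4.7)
The plan is to derive Theorem~\ref{DamSim} from Theorem~\ref{sp} specialized to $p=1$ and $\fre=[-2,2]$, pushed through the Joukowski parametrization $x=z+z^{-1}$, which identifies $\calS_\fre$ with two copies of $\bbC\cup\{\infty\}$ (one for $|z|<1$ and one for $|z|>1$) glued along $\partial\bbD$. Under this identification, $\calE$ from Definition~\ref{domainE} is the open interior of the ellipse $\{z+z^{-1}:|z|=R\}$, whose preimage is the annulus $\{R^{-1}<|z|<R\}$. Specializing~\eqref{funcA} to $p=1$ and using that the Jacobi parameters are real, we obtain the key identity
\begin{equation*}
a(z+z^{-1}) = 2\pi\, u(0)^2\, L(z)\, L(z^{-1}),
\end{equation*}
so that real zeros of $a$ in $\bbR\setminus\fre$ correspond to real zeros of $L$ in the annulus: zeros in $(R^{-1},1)\cup(-1,-R^{-1})$ produce eigenvalues of $\calJ$ at $z+z^{-1}$, and zeros in $(1,R)\cup(-R,-1)$ produce anti-bound states. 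Non-real zeros of $L$ come in complex-conjugate pairs by condition (i) and yield complex resonances, which do not enter the oddly interlacing count.

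In the forward direction, given $\calJ$ in the class~\eqref{expFree}, Lemma~\ref{merom} and Theorem~\ref{sp} provide the real-analyticity of $a$ on $\calE$, giving (i); condition (iv) is the definition $L=u/u(0)$. Condition (ii) is forced by self-adjointness of $\calJ$ together with the sign condition of $(S_a)$: a zero of $L$ in $\bbD\setminus\bbR$ would be a non-real eigenvalue, and a zero on $\partial\bbD\setminus\{\pm 1\}$ would make $a$ vanish in $\Int\fre$. Condition (iii) follows by combining simplicity of eigenvalues (from $(M_b)$) with the at-most-simple branch-point zero allowed by $(M_c)$.

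The crux of the argument is the translation of the oddly interlacing property $(S_b)$ into conditions (v) and (vi). At the right edge $\beta_1=2$, list the singularities of $\calJ$ in $[2,R+R^{-1})$ in order of increasing $x$, with multiplicities. By the displayed identity and $(M_d)$ (which is condition (v.c)), each such singularity corresponds bijectively to one real zero of $L$ on the positive real axis in $(R^{-1},R)$: eigenvalues come from zeros in $(R^{-1},1)$, anti-bound states from zeros in $(1,R)$, and a possible simple zero of $L$ at $z=1$ produces the edge singularity $x=2$. Since $z\mapsto z+z^{-1}$ is monotone in $|1-z|$ on $(0,\infty)$, this ordered list is exactly the multiplicity-weighted list of positive real zeros of $L$ in $(R^{-1},R)$ ordered outward from $z=1$. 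The requirement that positions $2,4,6,\ldots$ of this list not be eigenvalues reduces, after a parity count, to precisely (v.a) (total multiplicity of zeros of $L$ in $[1,x_1^{-1})$ is even) and (v.b) (total multiplicity in each $(x_j^{-1},x_{j+1}^{-1})$ is odd). The symmetric analysis at $\alpha_1=-2$ produces (vi).

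For the converse, given $L$ satisfying (i)-(vi), define $a(x):=c\,L(z)L(z^{-1})$ via $x=z+z^{-1}$, with $c>0$ chosen so that the measure~\eqref{meas} with weights~\eqref{weights} is a probability measure. Conditions (ii) and (iii) yield the sign and real-analyticity demanded by $(S_a)$, reversing the parity count converts (v) and (vi) into the oddly interlacing of $(S_b)$, and (v.c)/(vi.c) provide $(M_d)$; Theorem~\ref{sp} then produces a Jacobi matrix in the required class whose Jost function is $L$, with (iv) pinning down the normalization $u(0)$. The main obstacle is precisely the parity bookkeeping at the band edges $z=\pm 1$: whether or not $L(\pm 1)=0$ shifts the position count in the oddly interlacing list by one, which is why (v.a) and (vi.a) must be stated with the closed endpoint on the $\fre$-side so the parity comes out correctly in both cases, and all counts must be multiplicity-weighted since real zeros of $L$ on $(1,R)\cup(-R,-1)$ may carry arbitrary order.
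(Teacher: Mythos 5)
Your proposal is correct and takes essentially the same route as the paper: the paper's own proof simply observes that zeros of $L$ coincide with the poles of $M(z)=-m(z+z^{-1})$, so that zeros in $\bbD$ are preimages of eigenvalues and zeros in $\{1<|z|<R\}$ are preimages of resonances, and then invokes Theorem~\ref{sp}. Your write-up supplies the translation (via the $p=1$ case of~\eqref{funcA} and the parity bookkeeping at $z=\pm1$) in more detail than the paper does, but the underlying argument is the same.
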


\begin{proof}
Zeros of $L$ coincide with the poles of $M$, where $M(z)=-m(z+z^{-1})$ for $z\in\{z:|z|<1\}$. Therefore the poles of $M$ in $\bbD$ are the preimages of the eigenvalues of $\calJ$ under $z\mapsto z+z^{-1}$, and the poles of $M$ in $\bbC\setminus\bbD$ are the preimages of the resonances. Thus Theorem~\ref{DamSim} follows from Theorem~\ref{sp}.

It should be noted that one can also work this out directly from Damanik--Simon conditions without going through our spectral characterization.
%
%
%
%
\end{proof}

\subsection{Point perturbations of spectral measures}\label{ssPoint}

Using our spectral measure characterization we can easily analyze what happens to the Jacobi coefficients in the class~\eqref{exp} or~\eqref{ev} when we add or remove a point mass to/from the spectral measure.

The proofs are immediate from Theorems~\ref{sp} and~\ref{sp_fin}, and therefore will be omitted.

Let $\calJ=(a_n,b_n)_{n=1}^\infty$ be a Jacobi operator satisfying~\eqref{ev} or~\eqref{exp} for some $1<R\le \infty$. Let $\mu$ be its spectral measure. By the removal of a point mass we mean the perturbation of the form
$$
d\tilde{\mu}(x) = \tfrac{1}{1-w_0} (d\mu(x)-w_0 \delta_{E_0}),
$$
where $w_0 = \mu(\{E_0\})$ (the constant $\tfrac{1}{1-w_0}$ is inconsequential here; its only purpose is to make sure that $\tilde{\mu}$ is a probability measure). By the addition of a point mass we mean a perturbations of the form
$$
d\tilde{\mu}(x) = \tfrac{1}{1+w_0} (d\mu(x)+w_0 \delta_{E_0}),
$$
where $\mu(\{E_0\}) = 0$. We will restrict ourselves to the case $E_0 \notin \esssup \mu$. We denote the Jacobi operator corresponding to $\tilde{\mu}$ by $\tilde{\calJ}=(\tilde{a}_n,\tilde{b}_n)_{n=1}^\infty$. We are interested in the decay properties of the coefficients of $\tilde{\calJ}-\calJ$.

\subsubsection{One interval case $p=1$}

\vspace{0.1cm}

The following facts hold:

a. Removing or adding a point mass is an exponentially decaying perturbation.

b. If $\calJ$ is eventually free~\eqref{evFree}, then removing a point mass is a finite rank perturbation (i.e., $\tilde{\calJ}$ is also eventually free~\eqref{evFree}).

c. If $\calJ$ is eventually free~\eqref{evFree}, then adding a point mass at $E_0$ is a finite rank perturbation if and only if: (1) $\calJ$ has a resonance at $E_0$; (2) $w_0$ is canonical, i.e., determined by~\eqref{weights2}; (3) there is an even number of singularities of $\calJ$ between $(-2,2)$ and $E_0$.

d. If $\calJ$ is eventually free~\eqref{evFree}, but at least one of the three conditions in (c) is violated, then the perturbation is exponentially decaying with the rate of decay
\begin{equation}\label{pointPerturbation1}
\limsup_{n\to\infty} \left(|\tilde{a}_n-a_n|+|\tilde{b}_n-b_n|\right)^{1/2n}= 
\frac{|E_0|-\sqrt{|E_0|^2-4}}{2}.
\end{equation}

\begin{remarks}
1. The expression on the right of~\eqref{pointPerturbation1} is $R^{-1}$, where $R>1$ solves $R+R^{-1}=|E_0|$.


2. One can also state more specifics about the rate of decay in various cases when one adds or removes a point mass for $\calJ$ satisfying~\eqref{expFree}. We leave out the details.
\end{remarks}

\subsubsection{The case $p>1$}

\vspace{0.1cm}

The following facts hold:

a. If $\calJ$ is eventually periodic~\eqref{ev}, then removing a point mass produces $\tilde{\calJ}$ that is also eventually periodic~\eqref{ev}. Unlike for the case $p=1$, this means that the perturbation is generically \textit{not} finite rank, nor even compact. See Remark 1 below.

b. If $\calJ$ satisfies~\eqref{exp} for some $1<R\le \infty$, then removing a point mass produces $\tilde{\calJ}$ that also satisfies~\eqref{exp} with the same $R$ but possibly different $(a_n^{\circ},b_n^{\circ})_{n=1}^\infty$.

c. If $\calJ$ satisfies~\eqref{exp} for some $1<R\le \infty$, then adding a point mass produces $\tilde{\calJ}$ that also satisfies~\eqref{exp} with possibly smaller $R$ and possibly different $(a_n^{\circ},b_n^{\circ})_{n=1}^\infty$. Similarly to the $p=1$ case, one can see that in general $R$ decreases unless $E_0$ is a resonance of $\calJ$, $w_0$ is canonical, and an oddly interlacing type condition holds. We omit the explicit details in hopes that they should be clear from the previous discussion.

\begin{remarks}
1. Let us illustrate property (a) on a simple $p=2$ case. Suppose $\mu$ is the spectral measure of the periodic Jacobi matrix $\calJ$ with $a$ coefficients $a_1,a_2,a_1,a_2,\ldots$ and $b$ coefficients $b_1,b_2,b_1,b_2,\ldots$. Suppose $\calJ$ has an eigenvalue $(z_0)_+$ in the gap. It can be shown that if we remove this eigenvalue then the Jacobi matrix $\tilde{\calJ}$ corresponding to the new measure is going to be the 2-periodic matrix with $a$ coefficients $a_2,a_1,a_2,a_1,\ldots$ and $b$ coefficients $b_1,b_2,b_1,b_2,\ldots$. Clearly such a perturbation is neither finite rank nor even compact.

2. It is amusing that if one considers perturbation of the type $\tfrac{1}{1-w_0} (d\mu(x)-w_0 \delta_{E_0})$ with $w_0 < \mu(\{z_0\})$ (i.e., varying weight but not removing it completely), then  for every such $w_0$ the corresponding perturbation on the Jacobi coefficients $|\tilde{a}_n-a_n|+|\tilde{b}_n-b_n|$ is exponentially decaying. This follows from a very general result of Simon~\cite[Cor 24.4]{S_CD}. However when $w_0 = \mu(\{z_0\})$ the perturbation is no longer exponentially decaying as $\calJ$ becomes exponentially close to a completely different periodic matrix (generically).

3. As we have just seen,  if $\calJ$ does not have a resonance at $E_0\in\calE$ then we cannot add a pure mass at $E_0$ while preserving the rate of exponential convergence. Indeed this would introduce both an eigenvalue and a resonance at $E_0$ and violate $(O_1)$. Similarly, even if $E_0\in\calE$ is a resonance, we can transform it into an eigenvalue only if it is an odd-numbered singularity counting from any gap (see $(O_1)$).

It was noted by Geronimo (in $p=1$ situation, see~\cite[Thm 7]{Geronimo}) that if $E_0 > \sup \supp\,\mu$, then one can add $\{E_0\}$ to the spectrum without changing the exponential decay rate of the coefficients by first dividing the measure (i.e., both the a.c. part and the eigenweights) by a linear factor $(E_0-x)$ and then adding a pure mass at $E_0$ (in fact, this would actually work only if there is an even number of singularities on $[\sup \esssup\,\mu,E_0)$). 
This type of transformation is sometimes referred to as a Christoffel transform. From our point of view analyzing a Christoffel transformation is easy as it just multiplies $a(z)$ in Theorem~\ref{sp} by a linear factor.

In fact, if one wants a procedure to add an eigenvalue that would work for~\eqref{exp} regardless of $E_0 > \sup \supp\,\mu$ or oddly interlacing conditions, then one should use Christoffel transformation with a \textit{quadratic} factor $(E_0-x)(E_0+\veps-x)$ and then add a pure mass at $E_0$. From the point of view of the singularities, this adds an eigenvalue at $E_0$ as well as a resonance at $E_0+\veps$, so it does not ruin $(O_1)$ or $(O_2)$ (but one needs to choose $\veps>0$ or $\veps<0$ depending on the initial configuration of resonances and eigenvalues). From the point of view of the spectral measure, this always preserves positivity if $\veps$ is small enough. The proof is an immediate application of Theorem~\ref{sp}, which allows to avoid the cumbersome orthogonal polynomials computation of Nevai or the double commutator method of Gesztesy--Teschl. We stress that this works for any $p\ge 1$ and can be applied in the gaps, however it is restricted to our class~\eqref{exp} or~\eqref{ev}.
\end{remarks}

\bibliographystyle{plain}
\bibliography{../../mybib}

\end{document}